%
\documentclass[reqno,11pt]{amsart}
\textwidth 18cm 
\textheight 20cm 
\headheight 15pt 
\headsep 0.2in
\oddsidemargin -1cm 
\evensidemargin -1cm 
\topmargin 0cm
%

%
\usepackage{amsmath}
\usepackage{stmaryrd}
\usepackage{amsfonts}
\usepackage{amsthm}
\usepackage{amssymb}
\usepackage{graphicx,psfrag,epsfig}
\usepackage{color}
\usepackage{mathrsfs}
\usepackage{pdfsync}
\usepackage{cite}
\usepackage{times}
\usepackage{cancel}
%

\newtheorem{thm}{Theorem}[section]
\newtheorem{lem}[thm]{Lemma}
\newtheorem{cor}[thm]{Corollary}
\newtheorem{prop}[thm]{Proposition}

\newtheorem{rem}[thm]{Remark}

\newtheorem{definition}[thm]{Definition}
\newtheorem*{definition*}{Definition}

\DeclareMathAlphabet{\mathpzc}{OT1}{pzc}{m}{it}

\numberwithin{equation}{section}

\newcommand{\R}{\mathbb{R}}

\newcommand{\N}{\mathbb{N}}

\newcommand{\ve}{\varepsilon}

\newcommand{\rd}{\mathrm{d}}

\newcommand{\tred}{\color{red}}

%
%
\newcommand{\dhr}{\mathrel{\lhook\joinrel\relbar\kern-.8ex\joinrel\lhook\joinrel\rightarrow}} 
%
\title[A Constrained Model for MEMS with Varying Dielectric Properties]
{A Constrained Model for MEMS with Varying Dielectric Properties}
\author{Philippe Lauren\c{c}ot}
\thanks{Partially supported by the CNRS project~PICS07710}
\address{Institut de Math\'ematiques de Toulouse, UMR~5219, Universit\'e de Toulouse, CNRS \\ F--31062 Toulouse Cedex 9, France}
\email{laurenco@math.univ-toulouse.fr}

\author{Christoph Walker}
\address{Leibniz Universit\"at Hannover\\ Institut f\" ur Angewandte Mathematik \\ Welfengarten 1 \\ D--30167 Hannover\\ Germany}
\email{walker@ifam.uni-hannover.de}

\date{\today}

\begin{document}

\begin{abstract}
A semilinear parabolic equation with constraint modeling the dynamics of a microelectromechanical system (MEMS) is studied. In contrast to the commonly used MEMS model, the well-known pull-in phenomenon occurring above a critical potential threshold is not accompanied by a break-down of the model, but is recovered by the saturation of the constraint for pulled-in states. It is shown that a maximal stationary solution exists and that saturation only occurs for large potential values.
In addition, the existence, uniqueness, and large time behavior of solutions to the evolution equation are studied.
\end{abstract}

\keywords{Parabolic variational inequality, obstacle problem, MEMS, well-posedness, large time behavior}
\subjclass[2010]{35M86,35K57,35J87,35B40}

\maketitle

\section{Introduction}

We investigate the well-posedness and qualitative behavior of solutions to the following equation 
\begin{subequations}\label{EvEq001x}
\begin{align}
\partial_t u - \Delta u + \partial\mathbb{I}_{[-1,\infty)}(u) & \owns - \frac{\lambda}{2(1+u+W(x))^2} \,,\qquad t>0\,,\quad x\in  D\, , \label{EvEq001ax} \\
u & = 0 \,,\qquad t>0\,,\quad x\in  \partial D\, ,\label{EvEq001cx} \\
u(0) & = u_0 \,,\qquad   x\in  D\, , \label{EvEq001dx}
\end{align}
\end{subequations}
arising from the modeling of idealized electrostatically actuated  microelectromechanical systems  (MEMS) with varying dielectric properties. Here, $D$ is the shape at rest of  membrane coated with a thin dielectric layer which is held fixed  on its boundary and suspended  above a rigid horizontal ground plate with the same shape $D$. Holding the ground plate at potential zero and applying a positive potential to  the membrane induce a Coulomb force across the device and thereby a deformation of the membrane. After a suitable rescaling, the ground plate is located at vertical position $z=-1$ while the membrane at rest is located at $z=0$, and its vertical deflection  $u(t,x)$ at time $t\ge 0$ and position $x\in D$ solves \eqref{EvEq001x}. The parameter $\lambda$ in \eqref{EvEq001ax} is proportional to the applied voltage while $W$ is non-negative and depends on the spatial position $x$ and accounts for the possible dielectric heterogeneity of  the membrane. We point out that inertia and bending effects are neglected in \eqref{EvEq001x}. This model is derived in \cite{LW17} where we revisit the derivation of MEMS models with varying dielectric properties and differs from the commonly used model to describe the dynamics of MEMS which reads \cite{Pe02}
\begin{subequations}\label{Pe}
\begin{align}
\partial_t u - \Delta u  & = - \frac{\lambda f(x)}{2(1+u)^2} \,,\qquad t>0\,,\quad x\in  D\, , \label{Pea} \\
u & = 0 \,,\qquad t>0\,,\quad x\in  \partial D\, ,\label{Pec} \\
u(0) & = u_0 \,,\qquad   x\in  D\, , \label{Ped}
\end{align}
\end{subequations}
where $u$ and $\lambda$ have the same meaning as above, but the dielectric properties of the membrane are accounted for by the function $f$ which is non-negative and depends on the spatial position $x\in D$. The difference in the reaction terms in  \eqref{EvEq001ax} and \eqref{Pea}  stems from different approaches to compute the electrostatic force exerted on  the  membrane in the modeling, and we refer to \cite{LW17} and \cite{Pe02} for the complete derivations. Also, the thickness of  the  membrane with heterogeneous dielectric properties is retained when deriving \eqref{EvEq001ax}.

From a physical point of view an ubiquitous feature of MEMS devices is that when the applied potential exceeds a certain threshold value the restoring elastic forces no longer balance the electrostatic forces, and  the  membrane touches down on the ground plate, a phenomenon known as pull-in instability \cite{PeB03}. From the mathematical point of view this means that when $\lambda$ is larger than a certain threshold value $\lambda_*$, the diffusion term no longer overcomes the reaction term and there is a time $T_*>0$ such that $\min u(T_*)=-1$. When this occurs, the two models respond in a completely different way. Indeed, in \eqref{Pe} the reaction term becomes singular and the solution ceases to exist at this time (such a behavior is also referred to as {\it quenching} in literature). In contrast, the constraint term $ \partial\mathbb{I}_{[-1,\infty)}(u)$ accounts for the fact that  the  membrane cannot penetrate the ground plate upon touching down but rather lies directly on it. The notation $\partial\mathbb{I}_{[-1,\infty)}(u)$ stands for the subdifferential of the indicator function $\mathbb{I}_{[-1,\infty)}$ of the closed convex set $[-1,\infty)$, the indicator function taking the value zero on $[-1,\infty)$ and the value $\infty$ on its complement. Since $\partial\mathbb{I}_{[-1,\infty)}$ is a set-valued operator, see \eqref{mm} below, the evolution equation  \eqref{EvEq001ax} is actually a differential inclusion which could also be written as a parabolic variational inequality, see for instance \cite{Ba10, Br73}. Owing to this constraint, the evolution equation  \eqref{EvEq001ax} features no singularity, not even in the coincidence region where $u=-1$, at least if $W>0$ in $D$. Therefore, one expects to have global solutions for this model. As we shall prove below, this is indeed true  and, in fact, $W$ may even vanish, but only at isolated points and not to rapidly, the latter being measured by some integrability assumption on $1/W$, see \eqref{W1} below. We shall not explore the influence of a non-empty zero set of $W$ in great detail herein. Since $W$ is proportional to $1/\sigma$, where $\sigma$ denotes the dielectric permittivity of  the  membrane (see \cite{LW17}), the assumption $W>0$ corresponds to a membrane with no perfectly conducting part.

Another striking difference between the two models is that there is no stationary solution to \eqref{Pe} when $\lambda$ exceeds the critical value $\lambda_*$ while there is always at least one stationary solution to \eqref{EvEq001x} for all values of $\lambda$. Nevertheless, as we shall see, there is still a critical value $\Lambda_z>0$ for $\lambda$ which separates stationary solutions in {\it unzipped states} (for $\lambda<\Lambda_z$) and in {\it zipped states} (for $\lambda>\Lambda_z$) defined as:

\begin{definition*}
 A measurable function $h:D\rightarrow [-1,\infty)$ is a {\it zipped state} if the {\it coincidence set} $$\mathcal{C}(h):=\{ x\in D\,;\, h(x)=-1\}$$ has a positive Lebesgue measure and an {\it unzipped state} otherwise.
\end{definition*}

Thus, the issue of non-existence of stationary solutions to \eqref{Pe} is replaced in \eqref{EvEq001x} with the existence of zipped states. Since the pioneering works \cite{GPW05,Pe02,FMPS06,BGP00} a lot of research has been devoted to \eqref{Pe} providing a wealth of information on the structure of stationary solutions, the occurrence of touchdown in finite time, and the dynamical properties of solutions. We refer to \cite{EGG10} and \cite{LWBible} for a more detailed description and references.

Returning to \eqref{EvEq001x}, which is the focus of this paper, let us mention that an equation with a similar constraint  is considered in \cite{GB01} in a (fourth-order stationary) MEMS model with a dielectric layer placed on top of the ground plate. We also refer to \cite{LLG14,LLG15}, where a regularizing term is added in \eqref{Pe}  in order to describe the behavior of a MEMS after initial contact of the membrane and the ground plate.

The purpose of this paper is to provide various results for \eqref{EvEq001x} including a description of the stationary solutions, the well-posedness of the evolution problem as well as qualitative properties of the solutions. These results are presented in the next section.

\section{Main Results}\label{SecMR}

We assume throughout this paper that $D$ is a bounded domain in $\mathbb{R}^d$, $d\ge 1$, with smooth boundary $\partial D$ and that
\begin{equation}\label{W1}
\text{$W$ is a non-negative measurable function on $D$ such that $1/W\in L_2(D)$\,.}
\end{equation}
Further assumptions on $W$ will be explicitly stated later on whenever needed. Let us point out that \eqref{W1} is the minimal assumption to ensure that the right-hand side of \eqref{EvEq001ax} belongs to $L_1(D)$.

\subsection{Stationary Problem}

We shall first present our main results with respect to stationary solutions to \eqref{EvEq001x}. To have a compacter notion of the right-hand side of \eqref{EvEq001x} in the following, we introduce
\begin{equation}\label{g}
g_W(v)(x):= \frac{1}{2 \left(1+v(x)+W(x)\right)^{2}} \,,\quad x\in D\,,
\end{equation}
for a given function $v:D\rightarrow [-1,\infty)$.
We let $-\Delta_1$ be the $L_1$-realization of the Laplace-Dirichlet operator, that is,
$$
-\Delta_1 u:=-\Delta u\,,\qquad u\in D(\Delta_1):=\{w\in W_1^1(D)\,;\, \Delta w\in L_1(D)\,,\ w=0  \text{ on } \partial D\}\,,
$$
where $w=0  \text{ on } \partial D$ is to be understood in the sense of traces. Recall that $D(\Delta_1)$ embeds continuously in $W_q^1(D)$ for $1\le q < d/(d-1)$. Let us also recall that $\partial \mathbb{I}_{[-1,\infty)}$ is the maximal monotone graph in $\R\times\R$  given by
\begin{equation}\label{mm}
\partial \mathbb{I}_{[-1,\infty)}(r)=\left\{ \begin{array}{cl}
\emptyset \,, & r<-1\,,\\
(-\infty,0]\,, & r=-1\,,\\
\{0\}\,, & r>-1\,.
\end{array}\right.
\end{equation}
The following definition gives a precise notion of a stationary solution.

\begin{definition}\label{SSDef001}
A {\it stationary solution} to \eqref{EvEq001x} is a function $u\in D(\Delta_1)$ such that $g_W(u)\in L_1(D)$ and
$$
- \Delta u +\partial \mathbb{I}_{[-1,\infty)}(u)\owns   -\lambda g_W(u) \quad \text{in } D\,.
$$
Equivalently, for a.e. $x\in D$,
$$
\big(\Delta u(x)-\lambda g_W(u)(x)\big) \big(r-u(x)\big)\le 0\,,\quad r\ge -1\,.
$$
\end{definition}

Owing to the integrability of $\Delta u$ and $g_W(u)$, the differential inclusion in Definition~\ref{SSDef001} is to be understood in $L_1(D)$, that is, for a.e. $x\in D$. Throughout the paper we shall omit ``a.e.'' when no confusion seems likely.

The main result regarding stationary solutions is the following.

\begin{thm}[{\bf Maximal Stationary Solutions}]\label{statsol}
Suppose \eqref{W1}. Given $\lambda>0$, there is a maximal stationary solution $U_\lambda \in \mathring{H}^1(D)\cap  D(\Delta_1)$ to \eqref{EvEq001x} with $-1\le U_\lambda\le 0$ in $D$, and there is $\Lambda_z\in (0,\infty)$ such that $U_\lambda$ is unzipped for $\lambda<\Lambda_z$ and zipped for $\lambda>\Lambda_z$.
Moreover, $U_\lambda$ is decreasing with respect to $\lambda$ in the sense that if $\lambda_1<\lambda_2$, then $U_{\lambda_1}\ge U_{\lambda_2}$ in $D$. 

Finally, if $1/W\in L_{2p}(D)$ for some $p\in (1,\infty)$, then $U_\lambda\in W_p^2(D)$.
\end{thm}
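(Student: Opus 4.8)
The plan is to identify $U_\lambda$ with the solution of a \emph{linear} obstacle problem with obstacle $-1$ and an $L_p$-source, to deduce the pointwise two-sided bound $0\le \Delta U_\lambda \le \lambda g_W(U_\lambda)$, and then to invoke $L_p$-elliptic regularity. First I would exploit the a priori bounds $-1\le U_\lambda\le 0$ already furnished by the theorem. Since $1/W\in L_2(D)$ forces $W>0$ a.e. in $D$ and $1+U_\lambda+W\ge W$ a.e., one obtains the pointwise estimate
\[
0\le g_W(U_\lambda)=\frac{1}{2(1+U_\lambda+W)^2}\le \frac{1}{2W^2}=\frac12\left(\frac1W\right)^2\qquad\text{a.e. in } D.
\]
As $1/W\in L_{2p}(D)$, the right-hand side lies in $L_p(D)$, whence $g_W(U_\lambda)\in L_p(D)$ with $\|g_W(U_\lambda)\|_{L_p}\le \tfrac12\|1/W\|_{L_{2p}}^2$. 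In particular the source $F:=-\lambda g_W(U_\lambda)$ appearing in the differential inclusion of Definition~\ref{SSDef001} belongs to $L_p(D)$ and satisfies $F\le 0$.

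Next I would compute $\Delta U_\lambda$ on each of the two regions. By \eqref{mm} the selection $\zeta:=-\Delta U_\lambda-F\in\partial\mathbb{I}_{[-1,\infty)}(U_\lambda)$ vanishes a.e. on the unzipped region $\{U_\lambda>-1\}$, so that $\Delta U_\lambda=\lambda g_W(U_\lambda)\in[0,\lambda/(2W^2)]$ there. On the coincidence set $\mathcal{C}(U_\lambda)=\{U_\lambda=-1\}$ the function $U_\lambda$ is a.e. equal to the constant $-1$; since the standard interior regularity theory for the obstacle problem with $L_p$-source ($p>1$) yields $U_\lambda\in W^2_{p,\mathrm{loc}}(D)$, the Hessian, and hence $\Delta U_\lambda$, vanishes a.e. on $\mathcal{C}(U_\lambda)$. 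Combining the two regions gives
\[
0\le \Delta U_\lambda\le \frac{\lambda}{2W^2}\qquad\text{a.e. in } D,
\]
which is just the Lewy--Stampacchia inequality $F\le -\Delta U_\lambda\le F^+=0$ for this obstacle problem.

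Consequently $\Delta U_\lambda\in L_p(D)$ with $\|\Delta U_\lambda\|_{L_p}\le \tfrac{\lambda}{2}\|1/W\|_{L_{2p}}^2$. Since $D$ is bounded and smooth, $1<p<\infty$, and $U_\lambda$ vanishes on $\partial D$, the Calder\'on--Zygmund $L_p$-theory for the Dirichlet Laplacian identifies $U_\lambda$ with the unique element of $W^2_p(D)\cap\mathring{W}^1_p(D)$ solving $-\Delta v=-\Delta U_\lambda$, and therefore $U_\lambda\in W^2_p(D)$.

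The main obstacle is the middle step, namely justifying that $\Delta U_\lambda=0$ a.e. on $\mathcal{C}(U_\lambda)$. This requires genuine second-order interior regularity, and \emph{not} merely $\Delta U_\lambda\in L_1(D)$ as provided by $D(\Delta_1)$, so that the classical vanishing of the Hessian on a level set of a $W^{2,1}_{\mathrm{loc}}$-function may legitimately be applied. An alternative that sidesteps the level-set argument is to carry the two-sided bound through the penalised approximation used to construct $U_\lambda$: the penalised solutions satisfy a uniform version of $0\le \Delta U_\lambda^\varepsilon\le \lambda/(2W^2)$ in $L_p(D)$, which is stable under the limit $\varepsilon\to 0$.
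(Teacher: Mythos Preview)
Your proposal addresses only the final sentence of the theorem---the $W_p^2$-regularity of $U_\lambda$ when $1/W\in L_{2p}(D)$---and treats the existence of $U_\lambda$, its maximality, the bounds $-1\le U_\lambda\le 0$, the monotonicity in $\lambda$, and the existence of the threshold $\Lambda_z\in(0,\infty)$ as already given. None of these is a triviality, and together they form the substance of the theorem. In the paper they occupy all of Section~\ref{Sec2}: existence and maximality are obtained by a monotone iterative scheme in $L_1(D)$ (solve $-\Delta u^{n+1}+\partial\mathbb{I}_{[-1,\infty)}(u^{n+1})\owns-\lambda g_W(u^n)$ starting from $u^0=0$, use the comparison principle of Brezis--Strauss to get a decreasing sequence bounded below by any subsolution, and pass to the limit); monotonicity in $\lambda$ then follows by comparison; finiteness of $\Lambda_z$ is proved by the Kaplan eigenfunction argument; and positivity of $\Lambda_z$ comes from comparison with a subsolution provided by the classical unconstrained MEMS problem. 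You have not supplied any of this.

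For the regularity assertion itself your argument is essentially correct but considerably more involved than the paper's. Once $g_W(U_\lambda)\le 1/(2W^2)\in L_p(D)$ is noted (exactly as you do), the paper simply cites \cite[Theorem~1, Corollary~8]{BS73}, which already packages the Lewy--Stampacchia bound and the $W_p^2$-conclusion for solutions of $-\Delta u+\partial\mathbb{I}_{[-1,\infty)}(u)\owns f$ with $f\in L_p(D)$. Your explicit route---splitting into the two regions and invoking Calder\'on--Zygmund---amounts to unpacking that reference. The circularity you flag is real: you cannot use ``$\nabla^2 u=0$ a.e.\ on a level set'' before you know $u\in W^{2,1}_{\mathrm{loc}}$, and that is precisely what is at stake. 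Your suggested alternative via penalisation (carry the two-sided bound through the approximate problems and pass to the limit) is the correct fix, and is in fact how \cite{BS73} proceeds.
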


Interestingly, Theorem~\ref{statsol} guarantees the existence of at least one stationary solution to \eqref{EvEq001x} for {\it any} value of $\lambda$. As already mentioned this markedly contrasts with the commonly used vanishing aspect ratio model \eqref{Pe} for which no stationary solution exists for large values of $\lambda$, see \cite{BGP00, GPW05, Pe02}. Nevertheless, the role of the critical value of $\lambda$ is played by $\Lambda_z$ which separates the structural behavior of stationary solutions.

Theorem~\ref{statsol} is proven in Section~\ref{Sec2} to which we also refer for a precise definition of a maximal stationary solution (see Proposition~\ref{P1}) and for additional information on stationary solutions in general. The existence result is obtained by a rather classical monotone iterative scheme similar to the one used in \cite{EGG10,GhG06} to construct stationary solutions to \eqref{Pe}. However, due to the constraint in \eqref{EvEq001ax}  the proof is based on the analysis of cite{BS73} on semilinear second-order equations featuring maximal monotone graphs in $L_1$.

To complement the investigation of stationary solutions, we consider in Section~\ref{Sec3} the particular case when $d=1$ and $W\equiv const$. For this situation  we  present in Theorem~\ref{T44} a complete characterization of all stationary solutions. Even in this simplified setting, the structure of stationary solutions turns out to be quite sensitive with respect to the value of $\lambda$. In particular, it is shown that if $W$ is small, then there is an interval for $\lambda$ for which there is coexistence of unzipped and zipped states, a feature for which numerical evidence is provided in \cite{GB01} for a related model.

\subsection{Evolution Problem}

We next consider the evolution equation as stated in \eqref{EvEq001x}. Interestingly it can be seen as the gradient flow in $L_2(D)$ associated with the total energy
\begin{equation}\label{E}
\mathcal{E}_W(u):= \frac{1}{2}\int_D\vert\nabla u\vert^2\,\rd x+\int_D \mathbb{I}_{[-1,\infty)}(u)\,\rd x - \frac{\lambda}{2} \int_D\frac{\rd x}{1+u+W}\,.
\end{equation}
However, our analysis relies only partially on this structure since the functional setting we work with is $L_1(D)$ due to the integrability assumption \eqref{W1} on $1/W$. We use the notation
\begin{equation}
\mathcal{A} := \left\{ v \in \mathring{H}^1(D)\ ;\ v \ge -1 \;\text{ a.e. in }\; D \right\} \label{EvEq002}
\end{equation}
for the domain of the convex part of the energy $\mathcal{E}_W$,
where
$$
\mathring{H}^1(D):=\{v\in H^1(D)\,;\, v=0 \text{ on } \partial D\}\,.
$$
For our purpose, the framework of weak solutions turns out to be not sufficient. Thus, we introduce the stronger notion of an \textit{energy solution}.

\begin{definition}\label{EvDef001}
Let $u_0\in \mathcal{A}$. An {\it energy solution} to \eqref{EvEq001x} is a function $u$ such that, for all $t>0$,
$$
u\in W_2^1(0,t;L_2(D))\cap L_\infty(0,t;\mathring{H}^1(D))\cap L_1((0,t), D(\Delta_1))\,,\quad u(t)\in\mathcal{A}\,,
$$
which satisfies the energy estimate
\begin{equation}
\frac{1}{2} \int_0^t \|\partial_t u(s)\|_2^2\ \mathrm{d}s + \mathcal{E}_W(u(t)) \le \mathcal{E}_W(u_0) \label{EvEq010}
\end{equation}
and the weak formulation of \eqref{EvEq001ax}
\begin{equation}
\int_D (u(t)-u_0) \vartheta\ \mathrm{d}x = - \int_0^t \int_D \left[ \nabla u \cdot \nabla\vartheta + \zeta_u \vartheta +\lambda\vartheta g_W(u) \right]\ \mathrm{d}x\mathrm{d}s \label{EvEq011}
\end{equation}
for all $\vartheta\in \mathring{H}^1(D)\cap L_\infty(D)$, where
$$
\zeta_u:= \Delta u-\lambda g_W(u)-\partial_t u\in  L_1((0,t)\times D)
$$
satisfies $\zeta_u\in \partial\mathbb{I}_{[-1,\infty)}(u)$  a.e. in $(0,t)\times D$.
\end{definition}

The existence of  energy solutions is guaranteed by the next theorem.

\begin{thm}[{\bf Existence}]\label{EvThm002}
Suppose \eqref{W1} and let $\lambda>0$. Given $u_0\in \mathcal{A}\cap L_\infty(D)$, there exists at least one energy solution $u$
 to \eqref{EvEq001x}  satisfying also
\begin{equation}
u(t,x) \le \| (u_0)_+\|_\infty\ , \qquad (t,x)\in (0,\infty)\times D\ . \label{EvEq003}
\end{equation}
In addition, if there are $\kappa\in (0,1)$ and $T>0$ such that $u\ge \kappa-1$ in $(0,T)\times D$, then 
$$
u\in C^1([0,T);L_p(D))\cap C((0,T);W_p^2(D))
$$
for all $p\in (1,\infty)$.
\end{thm}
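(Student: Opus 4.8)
The proof exploits the gradient flow structure underlying \eqref{E}. Setting $\Phi(v):=\tfrac12\int_D|\nabla v|^2\,\rd x$ for $v\in\mathcal{A}$ and $\Phi(v):=+\infty$ otherwise, $\Phi$ is a proper, convex, lower semicontinuous functional on $L_2(D)$ with subdifferential $\partial\Phi(v)=-\Delta v+\partial\mathbb{I}_{[-1,\infty)}(v)$, so that \eqref{EvEq001ax} is the perturbed subgradient flow $\partial_t u+\partial\Phi(u)\ni-\lambda g_W(u)$. The only obstruction to invoking the abstract theory of \cite{Ba10,Br73} directly is that $g_W$ need not be globally Lipschitz on $L_2(D)$, since $W$ may vanish. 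The plan is thus to regularize $g_W$, to solve the regularized flows, to derive estimates uniform in the regularization parameter, and to pass to the limit. Concretely, for $\e\in(0,1)$ I replace $W$ by $W+\e$ and consider $\partial_t\ue+\partial\Phi(\ue)\ni-\lambda g_{W+\e}(\ue)$ with $\ue(0)=u_0$. On $\mathcal{A}$ one has $1+v+W+\e\ge\e$, so $v\mapsto g_{W+\e}(v)$ is a bounded, globally Lipschitz Nemytskii operator on $L_2(D)$, and the theory of subgradient flows with a Lipschitz perturbation \cite{Ba10,Br73} provides a unique strong solution $\ue$ with $\ue(t)\in\mathcal{A}$, with $\partial_t\ue,\partial\Phi(\ue)\in L_2((0,t)\times D)$, and with the energy identity obtained upon testing with $\partial_t\ue$.

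\emph{Uniform estimates.} Testing the equation with $(\ue-\|(u_0)_+\|_\infty)_+$ and using that $-\lambda g_{W+\e}(\ue)\le0$ while the constraint is active only where $\ue=-1\le\|(u_0)_+\|_\infty$ yields \eqref{EvEq003} uniformly in $\e$. Since $1/W\in L_1(D)$ by \eqref{W1}, the value $\mathcal{E}_{W+\e}(u_0)$ is bounded uniformly in $\e$, and the energy identity gives $\e$-uniform bounds on $\partial_t\ue$ in $L_2((0,t)\times D)$ and on $\ue$ in $L_\infty(0,t;\mathring{H}^1(D))$. The decisive point is that $g_{W+\e}(\ue)\le\tfrac{1}{2W^2}\in L_1(D)$ controls the reaction term uniformly only in $L_1$, not in $L_2$; writing $\zeta_\e:=\Delta\ue-\lambda g_{W+\e}(\ue)-\partial_t\ue$ for the constraint term and exploiting that $\Delta\ue\ge0$ a.e. on the coincidence set $\{\ue=-1\}$ while $\zeta_\e\le0$, one obtains the pointwise bound $|\Delta\ue|\le|\partial_t\ue|+\lambda g_{W+\e}(\ue)$ a.e., hence $\e$-uniform $L_1$-bounds on $\Delta\ue$ and $\zeta_\e$ and, moreover, equi-integrability of $\{\zeta_\e\}$ in $L_1((0,t)\times D)$.

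\emph{Passage to the limit.} The $L_2$-bounds on $\ue$ and $\partial_t\ue$ together with the compact embedding $\mathring{H}^1(D)\hookrightarrow L_2(D)$ allow an Aubin--Lions argument, producing a subsequence with $\ue\to u$ in $C([0,t];L_2(D))$ and a.e., so that $u(t)\in\mathcal{A}$ and $g_{W+\e}(\ue)\to g_W(u)$ in $L_1(D)$ by dominated convergence (the dominating function being $\tfrac{1}{2W^2}$). Weak lower semicontinuity of $\mathcal{E}_W$ then yields the energy inequality \eqref{EvEq010}, while passing to the limit in the weak formulation of the approximate problems yields \eqref{EvEq011}. I expect the main obstacle to be the limit in the constraint term $\zeta_\e$: equi-integrability from the previous step and the Dunford--Pettis theorem give $\zeta_\e\rightharpoonup\zeta$ weakly in $L_1$, but the identification $\zeta\in\partial\mathbb{I}_{[-1,\infty)}(u)$ a.e. is delicate in the $L_1$-framework and must be recovered from the maximal monotonicity of the graph \eqref{mm} through a Minty-type argument combined with the strong a.e.\ convergence $\ue\to u$.

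\emph{Parabolic regularity.} Finally, assume $u\ge\kappa-1$ on $(0,T)\times D$ with $\kappa\in(0,1)$. Then $u>-1$ there, so by \eqref{mm} the constraint term vanishes and $u$ solves the genuine semilinear equation $\partial_t u-\Delta u=-\lambda g_W(u)$ on $(0,T)\times D$. Because $1+u+W\ge\kappa$, the right-hand side satisfies $g_W(u)\le\tfrac{1}{2\kappa^2}$ and the map $v\mapsto g_W(v)$ is smooth and Lipschitz on $\{v\ge\kappa-1\}$; maximal $L_p$-regularity for the Dirichlet heat semigroup, together with a bootstrap on the Lipschitz reaction term, then upgrades $u$ to $C^1([0,T);L_p(D))\cap C((0,T);W_p^2(D))$ for every $p\in(1,\infty)$, which is the asserted regularity.
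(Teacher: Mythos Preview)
Your approach is correct and constitutes a genuinely different route from the paper's. The paper proceeds via a time-implicit Euler (minimizing-movement) scheme: it first solves the discrete step by a variational argument when $1/W\in L_4(D)$ (Lemma~\ref{EvLem012}), extends this to $1/W\in L_2(D)$ by approximating $W$ with $W+1/j$ (Lemma~\ref{EvLem019}), and only then passes to the continuous-time limit $h\to0$. Equi-integrability of the constraint term is obtained there through the de~la~Vall\'ee-Poussin theorem combined with the contraction estimate of \cite[Proposition~4]{BS73}. You bypass the time discretization entirely by invoking the Hilbert-space theory of Lipschitz-perturbed subgradient flows for the regularized problem, which is more economical and yields the energy \emph{identity} for $u_\varepsilon$ (hence an energy inequality for $u$ without the factor $\tfrac12$ appearing in \eqref{EvEq010}). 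Your route to equi-integrability of $\zeta_\varepsilon$ via the pointwise bound $|\zeta_\varepsilon|\le|\partial_t u_\varepsilon|+\lambda g_{W+\varepsilon}(u_\varepsilon)$ is also more direct; it rests, however, on the fact that $u_\varepsilon(t)\in H^2(D)$ for a.e.\ $t$ (so that Stampacchia's lemma gives $\Delta u_\varepsilon=0$ a.e.\ on $\{u_\varepsilon=-1\}$), and you should make explicit that this $H^2$-regularity is indeed delivered by the characterization of $\partial\Phi$ for the obstacle problem (\cite[Proposition~2.8]{Ba10}, as used in Lemma~\ref{EvLem012}). The identification $\zeta\in\partial\mathbb{I}_{[-1,\infty)}(u)$ that you flag as the main obstacle is handled in the paper exactly along the lines you anticipate (see the end of the proof of Lemma~\ref{EvLem019}, using \cite[Proposition~2.61]{FL07}), and the final parabolic-regularity step is identical.
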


The proof of Theorem~\ref{EvThm002} is performed in Section~\ref{Sec5} and relies partially on the gradient flow structure in $L_2(D)$ of \eqref{EvEq001x}. Indeed, we exploit this structure under the additional assumption  $1/W\in L_4(D)$. In that case, we use the direct method of calculus of variations to construct a solution in $\mathcal{A}\cap H^2(D)$ to the time implicit Euler scheme associated with \eqref{EvEq001x}. We then use a compactness argument along with \cite{BS73} to solve the same implicit Euler scheme but with $1/W\in L_2(D)$, thereby obtaining a less regular solution in $\mathcal{A}\cap D(\Delta_1)$. We next pass to the limit as the discretization parameter tends to zero, using a combination of energy arguments and Dunford-Pettis' theorem.

We supplement Theorem~\ref{EvThm002} with a uniqueness result which is valid when $1/W$ enjoys better integrability property. 

\begin{thm}[{\bf Uniqueness and Comparison Principle}] \label{EvThm005}
Suppose \eqref{W1} and, in addition, that 
$1/W\in L_r(D)$ for some $r>3d/2$. Let $\lambda>0$. Given $u_0\in \mathcal{A}\cap L_\infty(D)$, there exists a unique energy solution $u$  
to \eqref{EvEq001x}  satisfying also~\eqref{EvEq003}. 

Furthermore, if $v_0\in \mathcal{A}\cap L_\infty(D)$ is such that $u_0\le v_0$ in $D$ and if $v$ 
denotes the corresponding energy solution to \eqref{EvEq001x}, then $u(t)\le v(t)$ in $D$ for all $t\ge 0$.
\end{thm}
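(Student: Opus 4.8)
The existence of an energy solution satisfying \eqref{EvEq003} is already provided by Theorem~\ref{EvThm002}, so it remains to establish uniqueness and the comparison principle. Since uniqueness follows from the comparison principle by taking $v_0=u_0$ (which yields both $u\le v$ and $v\le u$), I would concentrate on the latter. Let $u,v$ be the energy solutions emanating from data $u_0\le v_0$ in $\mathcal{A}\cap L_\infty(D)$, set $w:=u-v$, and note that, thanks to \eqref{EvEq003} and $u,v\ge -1$, both $u$ and $v$ lie in $L_\infty((0,\infty)\times D)$; in particular $w_+:=\max\{w,0\}\in \mathring{H}^1(D)\cap L_\infty(D)$ is an admissible test function in \eqref{EvEq011} for a.e.\ $t$. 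Differentiating \eqref{EvEq011} in time (legitimate since $u,v\in W_2^1(0,t;L_2(D))$), subtracting the two resulting identities, and testing with $\vartheta=w_+(t)$ would give, for a.e.\ $t>0$,
\begin{equation*}
\frac{1}{2}\frac{\rd}{\rd t}\|w_+\|_2^2 + \|\nabla w_+\|_2^2 = -\int_D (\zeta_u-\zeta_v)\,w_+\,\rd x - \lambda\int_D \big(g_W(u)-g_W(v)\big)\,w_+\,\rd x\,,
\end{equation*}
where I have used $\nabla w\cdot\nabla w_+=|\nabla w_+|^2$ and the identity $\int_D \partial_t w\,w_+\,\rd x=\tfrac12\tfrac{\rd}{\rd t}\|w_+\|_2^2$, valid because $w\in W_2^1(0,t;L_2(D))\cap L_\infty(0,t;\mathring{H}^1(D))\hookrightarrow C([0,t];L_2(D))$.

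The two terms on the right are handled by monotonicity. Since $\partial\mathbb{I}_{[-1,\infty)}$ is monotone, see \eqref{mm}, one has $(\zeta_u-\zeta_v)(u-v)\ge 0$ a.e., whence $(\zeta_u-\zeta_v)\,w_+\ge 0$ and the first term is non-positive. For the reaction term I would exploit that $g_W$ is non-increasing together with the pointwise Lipschitz bound, deduced from \eqref{g} and $u,v\ge -1$,
\begin{equation*}
|g_W(u)-g_W(v)|\le \big(1+\min\{u,v\}+W\big)^{-3}\,|u-v|\le W^{-3}\,|u-v|\,.
\end{equation*}
On the set where $w>0$ this yields $-\big(g_W(u)-g_W(v)\big)w_+\le W^{-3}w_+^2$, so that the second term is bounded by $\lambda\int_D W^{-3}w_+^2\,\rd x$.

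The crux is to absorb $\lambda\int_D W^{-3}w_+^2\,\rd x$ into $\|\nabla w_+\|_2^2$. Writing $W^{-3}=(1/W)^3\in L_{r/3}(D)$ by assumption and applying H\"older's inequality gives $\int_D W^{-3}w_+^2\,\rd x\le \|1/W\|_r^3\,\|w_+\|_q^2$ with $q:=2r/(r-3)$. The hypothesis $r>3d/2$ is precisely what renders $q$ admissible for the Sobolev embedding $\mathring{H}^1(D)\hookrightarrow L_q(D)$ (for $d\ge 2$ this is the sharp requirement $q<2d/(d-2)$), so Gagliardo--Nirenberg supplies $\theta:=d/2-d/q\in(0,1)$ with $\|w_+\|_q\le C\|\nabla w_+\|_2^{\theta}\|w_+\|_2^{1-\theta}$. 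Young's inequality (using $2\theta<2$) then produces $\lambda\|1/W\|_r^3\|w_+\|_q^2\le \tfrac12\|\nabla w_+\|_2^2+C_\lambda\|w_+\|_2^2$, the gradient contribution is absorbed, and one is left with
\begin{equation*}
\frac{\rd}{\rd t}\|w_+\|_2^2 \le 2C_\lambda\,\|w_+\|_2^2\,,\qquad t>0\,.
\end{equation*}
As $w_+(0)=(u_0-v_0)_+=0$, Gr\"onwall's lemma forces $w_+\equiv 0$, i.e.\ $u(t)\le v(t)$ for all $t\ge 0$; exchanging $u$ and $v$ when $u_0=v_0$ yields uniqueness.

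The main obstacle is this absorption step: the reaction difference degenerates like the cubic weight $W^{-3}$ near the coincidence set where $W$ may vanish, and controlling $\int_D W^{-3}w_+^2\,\rd x$ by the Dirichlet energy is possible only under the sharp integrability $1/W\in L_r(D)$ with $r>3d/2$, which balances this weight against the Sobolev embedding. A secondary, more technical point is to justify the energy identity in the low-regularity framework of Definition~\ref{EvDef001}---in particular that $w_+$ is an admissible test function and that the measurable selections $\zeta_u,\zeta_v\in L_1$ pair with $w_+\in L_\infty$---for which the a priori bound \eqref{EvEq003} and the maximal monotonicity \eqref{mm} are essential.
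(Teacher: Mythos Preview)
Your proposal is correct and follows essentially the same route as the paper: test the difference of the two equations with $(u-v)_+$, discard the subdifferential term by monotonicity, bound the reaction difference pointwise by $\lambda W^{-3}(u-v)_+^2$, and then combine H\"older, Gagliardo--Nirenberg (this is precisely where $r>3d/2$ enters, ensuring the exponent $q=2r/(r-3)$ is subcritical), and Young to absorb into $\|\nabla(u-v)_+\|_2^2$, concluding with Gr\"onwall. The only minor omission is the case $d=1$ with $r\in(3/2,3]$, where your formula for $q$ breaks down and the paper instead takes $p=\infty$; this is easily fixed.
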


It is well-known that the comparison principle is available for parabolic variational inequalities \cite[Proposition~II.7]{Br72}. The proof of Theorem~\ref{EvThm005} is given in Section~\ref{Sec5}.

We next turn to the large time dynamics of energy solutions to \eqref{EvEq001x} and combine the information on the maximal stationary solutions provided by Theorem~\ref{statsol} along with the energy inequality \eqref{EvEq010} to describe the structure of the $\omega$-limit set $\omega(u_0)$, defined for an energy solution $u$ to \eqref{EvEq001x} as the set of all $v\in \mathcal{A}$ for which there is a sequence $(t_k)_{k\ge 1}$ of positive real numbers such that
\begin{equation*}
\lim_{k\to\infty} t_k = \infty \;\text{ and }\; \lim_{k\to\infty} \|u(t_k)-v\|_2 = 0\ . 
\end{equation*} 
Owing to the energy structure, the $\omega$-limit set consists only of stationary solutions.

\begin{thm}\label{EvThm004}
Suppose \eqref{W1}. Let $\lambda>0$ and $u_0\in \mathcal{A}\cap L_\infty(D)$ and consider an energy solution $u$ to \eqref{EvEq001x}  satisfying also \eqref{EvEq003}. 
Then the set $\omega(u_0)$ is non-empty and bounded in $\mathring{H}^1(D)$ and contains only stationary solutions to \eqref{EvEq001x}. Furthermore, if $1/W\in L_r(D)$ for some $r>3d/2$ and $u_0\ge U_\lambda$ in $D$, then $\omega(u_0)=\{U_\lambda\}$ and 
$$
\lim_{t\to\infty} \| u(t) - U_\lambda\|_2 = 0\ . 
$$
\end{thm}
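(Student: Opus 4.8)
The plan is to treat $\omega(u_0)$ as the $\omega$-limit set of a gradient flow, using the energy inequality \eqref{EvEq010} as a Lyapunov functional. First I would extract uniform bounds from \eqref{EvEq010}: since $u(t)\in\mathcal{A}$ makes the indicator term in \eqref{E} vanish and $1+u(t)+W\ge W$ gives $\int_D (1+u(t)+W)^{-1}\,\rd x\le \|1/W\|_1<\infty$ by \eqref{W1}, the functional $\mathcal{E}_W$ is bounded below along the orbit. Hence \eqref{EvEq010} yields both $\sup_{t\ge0}\|u(t)\|_{\mathring{H}^1(D)}<\infty$ and the crucial dissipation bound $\int_0^\infty\|\partial_t u(s)\|_2^2\,\rd s<\infty$. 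The $\mathring{H}^1(D)$-bound and the compact embedding $\mathring{H}^1(D)\hookrightarrow L_2(D)$ make the orbit relatively compact in $L_2(D)$, so $\omega(u_0)$ is non-empty; and since each $v\in\omega(u_0)$ is a weak $\mathring{H}^1(D)$-limit of orbit points, weak lower semicontinuity of the norm shows $\omega(u_0)$ is bounded in $\mathring{H}^1(D)$.

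Next I would show that every $v\in\omega(u_0)$ is a stationary solution. Given $t_k\to\infty$ with $u(t_k)\to v$ in $L_2(D)$, the dissipation bound lets me pick admissible times $s_k\in[t_k,t_k+1]$ with $\|\partial_t u(s_k)\|_2\to0$ and, by Cauchy--Schwarz, still $u(s_k)\to v$ in $L_2(D)$; passing to a subsequence I also get $u(s_k)\to v$ a.e. and $u(s_k)\rightharpoonup v$ in $\mathring{H}^1(D)$. Testing the differential inclusion at time $s_k$ against $w-u(s_k)$ for $w\in\mathcal{A}\cap L_\infty(D)$ and using $\zeta_{u(s_k)}\in\partial\mathbb{I}_{[-1,\infty)}(u(s_k))$ produces
$$
\int_D \partial_t u(s_k)(w-u(s_k))\,\rd x+\int_D\nabla u(s_k)\cdot\nabla(w-u(s_k))\,\rd x+\lambda\int_D g_W(u(s_k))(w-u(s_k))\,\rd x\ge0\,.
$$
The first integral vanishes in the limit because $\|\partial_t u(s_k)\|_2\to0$, and the last converges to $\lambda\int_D g_W(v)(w-v)\,\rd x$ by dominated convergence, the pointwise bound $g_W(u(s_k))\le\tfrac{1}{2}W^{-2}\in L_1(D)$ together with the uniform $L_\infty(D)$-bound \eqref{EvEq003} on $u(s_k)$ supplying the domination.

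The main obstacle is the gradient term, for which weak convergence alone does not pass to the limit; the key point is that it points in the favourable direction. Writing it as $\int_D\nabla u(s_k)\cdot\nabla w\,\rd x-\|\nabla u(s_k)\|_2^2$ and using weak lower semicontinuity, $\liminf_k\|\nabla u(s_k)\|_2^2\ge\|\nabla v\|_2^2$, I obtain that its $\limsup_k$ is bounded above by $\int_D\nabla v\cdot\nabla(w-v)\,\rd x$. Taking $\limsup_k$ in the displayed inequality then gives $\int_D\nabla v\cdot\nabla(w-v)\,\rd x+\lambda\int_D g_W(v)(w-v)\,\rd x\ge0$ for all $w\in\mathcal{A}\cap L_\infty(D)$. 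Since $v\ge-1$ gives $g_W(v)\le\tfrac{1}{2}W^{-2}\in L_1(D)$, a density argument together with the elliptic $L_1$-framework used for the stationary problem identifies $v\in D(\Delta_1)$ as a stationary solution in the sense of Definition~\ref{SSDef001}.

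Finally, under the hypotheses $1/W\in L_r(D)$ with $r>3d/2$ and $u_0\ge U_\lambda$, I would invoke Theorem~\ref{EvThm005}. The stationary $U_\lambda$ is, being constant in time, the unique energy solution with datum $U_\lambda$ (note $\|(U_\lambda)_+\|_\infty=0$ as $U_\lambda\le0$, so \eqref{EvEq003} holds), whence the comparison principle yields $u(t)\ge U_\lambda$ for all $t\ge0$, and thus $v\ge U_\lambda$ for every $v\in\omega(u_0)$. Since each such $v$ is a stationary solution while $U_\lambda$ is the maximal one by Theorem~\ref{statsol}, we also have $v\le U_\lambda$, forcing $v=U_\lambda$ and $\omega(u_0)=\{U_\lambda\}$. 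Relative compactness of the orbit in $L_2(D)$ combined with this single-point $\omega$-limit set then gives $\lim_{t\to\infty}\|u(t)-U_\lambda\|_2=0$.
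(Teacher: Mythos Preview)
Your overall strategy is sound and tracks the paper's gradient-flow philosophy: the energy bounds, relative compactness of the orbit, the selection of ``good times'' $s_k$ with $\|\partial_t u(s_k)\|_2\to 0$, and the use of weak lower semicontinuity of $\|\nabla\cdot\|_2$ to pass to the limit in the variational inequality are all correct and nicely executed. The treatment of the second assertion via Theorem~\ref{EvThm005} and the maximality of $U_\lambda$ also matches the paper exactly.

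There is, however, a genuine gap in the step you describe as ``a density argument together with the elliptic $L_1$-framework identifies $v\in D(\Delta_1)$ as a stationary solution''. What you have established is the $\mathring{H}^1$-variational inequality
\[
\int_D \nabla v\cdot\nabla(w-v)\,\rd x + \lambda\int_D g_W(v)(w-v)\,\rd x \ge 0\,,\qquad w\in\mathcal{A}\cap L_\infty(D)\,,
\]
with source $-\lambda g_W(v)$ lying only in $L_1(D)$. Definition~\ref{SSDef001} requires $v\in D(\Delta_1)$, i.e.\ $\Delta v\in L_1(D)$, and this does \emph{not} follow from the variational inequality alone when the source is merely $L_1$: the usual Lewy--Stampacchia route (or \cite[Proposition~2.8]{Ba10}, as invoked in Lemma~\ref{EvLem012}) needs $L_2$ data, and without it the ``reaction measure'' $-\Delta v + \lambda g_W(v)$ is a priori only a non-negative Radon measure, possibly singular on the contact set. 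You would need an honest approximation argument (e.g.\ truncate $g_W(v)$, solve the $H^2$ obstacle problem, and show the resulting sequence converges both to $v$ in $\mathring{H}^1$ and to the Br\'ezis--Strauss $D(\Delta_1)$-solution, then identify the two limits); this is doable but is several steps, not a one-liner.

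The paper sidesteps this issue entirely by never dropping $\zeta_u$. It works with time-shifted functions $V_k(s)=u(s+t_k)$, $\xi_k(s)=\zeta_u(s+t_k)$ on a fixed time window, and uses the de~la~Vall\'ee-Poussin theorem together with \cite[Proposition~4]{BS73} to obtain uniform integrability of $(\xi_k)$ in $L_1((-1,1)\times D)$. Dunford--Pettis then yields $\xi_k\rightharpoonup\xi$ in $L_1$, and passing to the limit in the \emph{equation} (rather than the inequality) gives $\Delta v=\xi+\lambda g_W(v)\in L_1(D)$ directly. If you want to preserve your single-time approach, you could import this idea: apply \cite[Proposition~4]{BS73} at each $s_k$ with a fixed superlinear $\Phi$ (chosen via de~la~Vall\'ee-Poussin from $\lambda/W^2\in L_1$) to get $\int_D\Phi(\zeta_u(s_k))\,\rd x$ uniformly bounded, hence $\zeta_u(s_k)\rightharpoonup\zeta$ in $L_1(D)$ along a subsequence, and pass to the limit in the equation to obtain $v\in D(\Delta_1)$ without the variational-inequality detour.
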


The proof of Theorem~\ref{EvThm004} relies on the energy inequality \eqref{EvEq010} and is carried out in Section~\ref{Sec6} in the spirit of the proof of Lasalle's invariance principle. An numerical illustration is given in Figure~\ref{MassSpring}.

\begin{figure}[ht]
\centering\includegraphics[scale=.45]{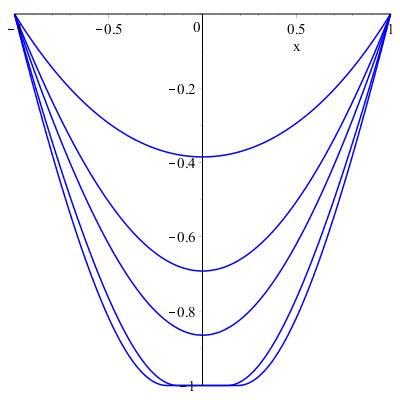}
\caption{\small Zipped and unzipped states: one-dimensional simulation of the solution to \eqref{EvEq001x} at increasing time instants ($W\equiv 1$, $\lambda=4$, $u_0= 0$ and constraint approximated by $K\min(1+u,0)$ with $K$ large).}\label{MassSpring}
\end{figure}

We finally provide some additional information on the dynamics when the evolution starts from rest, that is, when $u_0=0$.

\begin{thm}\label{obelix888}
Suppose that $1/W\in L_r(D)$ for some $r>3d/2$ and let $u$ be the solution to \eqref{EvEq001x} with $u_0=0$. Then:
\begin{itemize}
\item[(i)] For $t_1<t_2$ there holds $u(t_1)\ge u(t_2)$ in $D$ and $\mathcal{C}(u(t_1))\subset \mathcal{C}(u(t_2))$.

\item[(ii)] If $\lambda<\Lambda_z$, then $u(t)$ is unzipped for all $t\ge 0$.

\item[(iii)] There is $\Lambda^*\ge \Lambda_z$ such that if $\lambda >\Lambda^*$, then there is $T_z=T_z(\lambda, W)>0$ such that $u(t)$ is zipped for $t>T_z$.

\item[(iv)] In addition, $\Lambda^*= \Lambda_z$ if $W\in L_\infty(D)$.
\end{itemize}
\end{thm}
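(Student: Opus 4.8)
The plan is to combine the order--preserving structure of the flow (Theorem~\ref{EvThm005}), the one--sided bound $u\le 0$ coming from \eqref{EvEq003}, the comparison with the maximal stationary solution $U_\lambda$, and the estimate obtained by testing the weak formulation \eqref{EvEq011} with the first Dirichlet eigenfunction of $-\Delta$.

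For (i), since the problem is autonomous and, under the standing assumption $1/W\in L_r(D)$ with $r>3d/2$, energy solutions are unique, the map $S(t):u_0\mapsto u(t)$ is a well--defined order--preserving semigroup. As $u_0=0$, the bound \eqref{EvEq003} gives $u(h)\le 0=u_0$ for every $h>0$; applying $S(t)$ and the comparison principle yields $u(t+h)=S(t)u(h)\le S(t)u_0=u(t)$, whence $u(t_1)\ge u(t_2)$ for $t_1<t_2$. The inclusion of coincidence sets then follows pointwise: if $u(t_1,x)=-1$, then $-1\le u(t_2,x)\le u(t_1,x)=-1$, so $x\in\mathcal{C}(u(t_2))$. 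For (ii), I would observe that the stationary solution $U_\lambda$, viewed as time--independent, is itself an energy solution with $U_\lambda\le 0=u_0$, so the comparison principle gives $u(t)\ge U_\lambda$ for all $t\ge 0$. Consequently $\mathcal{C}(u(t))\subset\mathcal{C}(U_\lambda)$, and since $U_\lambda$ is unzipped for $\lambda<\Lambda_z$ by Theorem~\ref{statsol}, the set $\mathcal{C}(U_\lambda)$ is Lebesgue--null and so is $\mathcal{C}(u(t))$; that is, $u(t)$ is unzipped.

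For (iii), let $\varphi_1>0$ be the first Dirichlet eigenfunction of $-\Delta$ on $D$, with eigenvalue $\mu_1>0$, normalised by $\int_D\varphi_1\,\rd x=1$, and set $y(t):=\int_D u(t)\varphi_1\,\rd x$. Testing \eqref{EvEq011} with $\varphi_1$ and integrating by parts gives, for a.e. $t$, the identity $y'(t)=-\mu_1 y(t)-\lambda\int_D g_W(u)\varphi_1\,\rd x-\int_D\zeta_u\varphi_1\,\rd x$. So long as $u(t)$ is unzipped the last term vanishes, because $\zeta_u\in\partial\mathbb{I}_{[-1,\infty)}(u)$ is then zero a.e.; moreover $u\le 0$ forces $1+u+W\le 1+W$, hence $g_W(u)\ge 1/[2(1+W)^2]$ and $\int_D g_W(u)\varphi_1\,\rd x\ge c_0:=\int_D\varphi_1/[2(1+W)^2]\,\rd x\in(0,\infty)$. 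Using $y\ge -1$ I obtain $y'(t)\le\mu_1-\lambda c_0$ as long as $u$ remains unzipped. Thus, for $\lambda>\Lambda^*:=\mu_1/c_0$ the right--hand side is a negative constant, and since $y(0)=0$ while $y\ge -1$, the solution $u$ cannot stay unzipped beyond a finite time $T_z$; by the monotonicity of the coincidence sets from (i), $u(t)$ then remains zipped for all $t>T_z$. The inequality $\Lambda^*\ge\Lambda_z$ is automatic: if one had $\Lambda^*<\Lambda_z$, any $\lambda\in(\Lambda^*,\Lambda_z)$ would produce finite--time zipping, contradicting (ii).

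The delicate point is (iv), where the crude threshold $\mu_1/c_0$ must be sharpened to $\Lambda_z$. Here I would exploit the long--time convergence $u(t)\to U_\lambda$ in $L_2(D)$ from Theorem~\ref{EvThm004} (applicable since $u_0=0\ge U_\lambda$). Arguing by contradiction, suppose $u$ stays unzipped for all $t\ge 0$; then $y'(t)=-\mu_1 y(t)-\lambda\int_D g_W(u)\varphi_1\,\rd x$ for all $t$, and since $u(t)\downarrow U_\lambda$ monotonically, monotone convergence yields $\int_D g_W(u)\varphi_1\,\rd x\to\int_D g_W(U_\lambda)\varphi_1\,\rd x$, a finite limit because $g_W(U_\lambda)\le 1/(2W^2)$ with $1/W\in L_2(D)$. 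As $y$ is bounded and decreasing while $y'$ converges, that limit must be $0$, giving $\mu_1\int_D U_\lambda\varphi_1\,\rd x+\lambda\int_D g_W(U_\lambda)\varphi_1\,\rd x=0$. Testing the stationary inclusion for $U_\lambda$ with $\varphi_1$ makes the same left--hand side equal to $-\int_D\zeta_{U_\lambda}\varphi_1\,\rd x$, where on the coincidence set $\Delta U_\lambda=0$ a.e. and hence $\zeta_{U_\lambda}=-\lambda g_W(U_\lambda)<0$; since $U_\lambda$ is zipped for $\lambda>\Lambda_z$, this integral is strictly negative, a contradiction. I expect the main obstacle to be carrying this step through precisely under $W\in L_\infty(D)$: the boundedness of $W$ is what keeps the force $g_W$ bounded away from zero on the coincidence set, so that $\int_D\zeta_{U_\lambda}\varphi_1\,\rd x$ is quantitatively negative, and secures the uniform integrability needed to pass to the limit in the eigenfunction identity, thereby pinning the zipping threshold exactly at $\Lambda_z$.
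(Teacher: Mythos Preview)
Your arguments for (i)--(iii) are correct and match the paper's in substance. For (i) the paper invokes Proposition~\ref{jovanotti}, which is proved at the level of the implicit Euler scheme (a supersolution stays a supersolution at each step); your semigroup argument via $u(h)\le 0=u_0$ and the comparison principle is a cleaner route to the same conclusion. Part (ii) is identical to the paper's argument, and your (iii) is the differential form of the paper's integral eigenfunction estimate, with exactly the same threshold $\Lambda^*=\mu_1/\!\int_D\varphi_1 g_W(0)\,\rd x$.

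For (iv) your approach is genuinely different from the paper's, and in fact simpler. The paper argues by contradiction as you do, but then follows \cite{BCMR,GhG08a}: setting $M=\|W\|_\infty$, it introduces an explicit concave increasing map $\Psi_\varepsilon$ depending on $M$ and shows that $v_\varepsilon:=\Psi_\varepsilon^{-1}(u)$ is a subsolution to the unconstrained problem with parameter $\lambda(1-\varepsilon^3)$, uniformly bounded below by $\varepsilon^3-1>-1$. A Perron argument then gives $U_{\lambda(1-\varepsilon^3)}\ge\varepsilon^3-1$, contradicting $\lambda(1-\varepsilon^3)>\Lambda_z$ for small $\varepsilon$. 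This construction genuinely requires $M<\infty$. Your route---pass to the limit in the eigenfunction identity using $u(t)\downarrow U_\lambda$ from Theorem~\ref{EvThm004}, conclude $\int_D\zeta_{U_\lambda}\varphi_1\,\rd x=0$, and contradict this via $\zeta_{U_\lambda}=-\lambda/(2W^2)<0$ a.e.\ on the (positive-measure) coincidence set---avoids the transformation entirely. Two remarks: first, the step ``$\Delta U_\lambda=0$ a.e.\ on $\mathcal{C}(U_\lambda)$'' needs justification; it follows from Stampacchia's lemma once $U_\lambda\in W_p^2(D)$ for some $p>1$, which Theorem~\ref{statsol} gives since $r>3d/2$ forces $r>2$ when $d\ge 2$, and in $d=1$ one has $U_\lambda\in W_1^2(-1,1)$ directly from $U_\lambda''=\Delta U_\lambda\in L_1$. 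Second, your speculation about where $W\in L_\infty(D)$ enters is misplaced: your argument does not use it, so it actually yields (iv) under the standing hypothesis $1/W\in L_r(D)$, $r>3d/2$, alone. The paper's transformation method buys a more self-contained proof (it does not rely on the $\omega$-limit identification of Theorem~\ref{EvThm004}), at the cost of the extra assumption on $W$.
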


For the proof of Theorem~\ref{obelix} is performed in Section~\ref{Sec6}. The time monotinicity in statement (i) is actually a classical feature of parabolic equations when the initial value is a supersolution and it turns out that the constraint does not alter this property.

\section{Stationary Solutions}\label{Sec2}

In this section we prove Theorem~\ref{statsol}. Thus we investigate stationary solutions to \eqref{EvEq001x} in the sense of Definition~\ref{SSDef001}, that is, solutions to
\begin{subequations}\label{idefix} 
\begin{align}
- \Delta u +\partial \mathbb{I}_{[-1,\infty)}(u)&\owns   -\lambda g_W(u) \,,&& x\in D\,, \label{idefix1}\\
u&=0\,,&& x\in\partial D\,, \label{idefix3}
\end{align}
\end{subequations}
with $\lambda>0$ and $g_W$ given in \eqref{g}. Recalling that we always assume \eqref{W1} to hold, the right-hand side of \eqref{idefix1} belongs to $L_1(D)$ and the analysis of this section is based on the nice properties of the maximal monotone operator $-\Delta+\partial\mathbb{I}_{[-1,\infty)}$ in $L_1(D)$ thoroughly studied in \cite[$\S$1]{BS73}. In particular, we recall the basic result on existence and uniqueness.

\begin{thm}\cite[Theorem 1]{BS73}\label{Flea}
Given $f\in L_1(D)$, there is a unique $v\in D(\Delta_1)$ such that
$$
- \Delta v (x) +\partial \mathbb{I}_{[-1,\infty)}(v(x))\owns   f(x)\,,\quad  x\in D\,. 
$$
\end{thm}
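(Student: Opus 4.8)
The plan is to reduce the $L_1$ problem to a family of obstacle problems with $L_2$ data, to derive $L_1$ a priori estimates strong enough to survive the passage to the limit, and to read off uniqueness from the $L_1$-accretivity of $-\Delta_1+\partial\mathbb{I}_{[-1,\infty)}$. I would dispose of uniqueness first, as it is the cleaner half. Suppose $v_1,v_2\in D(\Delta_1)$ both solve the inclusion for the same $f$, say $-\Delta v_i+z_i=f$ with $z_i\in\partial\mathbb{I}_{[-1,\infty)}(v_i)$, and set $w:=v_1-v_2\in D(\Delta_1)$, so that $\Delta w=z_1-z_2$. The monotonicity of $\partial\mathbb{I}_{[-1,\infty)}$ gives $(z_1-z_2)(v_1-v_2)\ge 0$, hence $\mathrm{sign}(w)(z_1-z_2)\ge 0$ a.e., while Kato's inequality yields $\Delta|w|\ge \mathrm{sign}(w)\,\Delta w=\mathrm{sign}(w)(z_1-z_2)$ in $\mathcal{D}'(D)$. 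Thus $|w|$ is a nonnegative subharmonic function in $W_1^1(D)$ vanishing on $\partial D$, and the maximum principle forces $w\equiv 0$.

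For existence I would regularize the data: choose $f_n\in L_2(D)$ with $f_n\to f$ in $L_1(D)$. For each $n$ the inclusion $-\Delta v_n+\partial\mathbb{I}_{[-1,\infty)}(v_n)\ni f_n$ is the Euler--Lagrange relation of the strictly convex, coercive, lower semicontinuous functional $v\mapsto \tfrac12\int_D|\nabla v|^2\,\mathrm{d}x-\int_D f_n v\,\mathrm{d}x$ minimized over the closed convex set $\mathcal{A}$; the direct method provides a unique minimizer $v_n\in\mathcal{A}$, and the standard regularity theory for the obstacle problem with $L_2$ right-hand side, constant obstacle and smooth $\partial D$ gives $v_n\in H^2(D)$ together with a multiplier $z_n:=f_n+\Delta v_n\in\partial\mathbb{I}_{[-1,\infty)}(v_n)$.

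The heart of the argument is a set of $L_1$ a priori estimates, and here I would exploit the obstacle structure. Since $v_n\in H^2(D)$ is equal to the constant $-1$ on its coincidence set, one has $\Delta v_n=0$ a.e. there (the Hessian of an $H^2$ function vanishes a.e. on a level set), whence $z_n=f_n\,\mathbf{1}_{\{v_n=-1\}}$ and in particular $|z_n|\le |f_n|$ pointwise a.e. This single observation is decisive: it gives at once the bound $\|z_n\|_1\le\|f_n\|_1$, hence $\|\Delta v_n\|_1\le 2\|f_n\|_1$, and it upgrades the mere $L_1$-bound on $z_n$ to \emph{equi-integrability}, inherited from the $L_1$-convergent, hence equi-integrable, sequence $(f_n)$. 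The $L_1$-elliptic estimate for the Dirichlet Laplacian (the embedding $D(\Delta_1)\hookrightarrow W_q^1(D)$ for $q<d/(d-1)$ recalled in the text) then bounds $(v_n)$ in $W_q^1(D)$.

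The main obstacle is the passage to the limit for the multiplier, since the lack of reflexivity of $L_1$ defeats a naive weak-compactness argument; this is exactly what the equi-integrability bound $|z_n|\le|f_n|$ overcomes. Using the compact embedding $W_q^1(D)\hookrightarrow\hookrightarrow L_1(D)$ I would extract $v_n\to v$ in $L_1(D)$ and a.e., with $v\ge -1$ and $v=0$ on $\partial D$ by continuity of the trace, while the Dunford--Pettis theorem yields $z_n\rightharpoonup z$ weakly in $L_1(D)$. Passing to the limit in $-\Delta v_n+z_n=f_n$ in $\mathcal{D}'(D)$ gives $-\Delta v=f-z\in L_1(D)$, so $v\in D(\Delta_1)$. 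It then remains to identify $z\in\partial\mathbb{I}_{[-1,\infty)}(v)$: the sign condition $z_n\le 0$ passes to $z\le 0$ a.e., and for any measurable $E\subset\{v>-1\}$ the a.e. convergence $\mathbf{1}_{\{v_n=-1\}}\to 0$ on $E$, combined with the equi-integrability of $(f_n)$ and Vitali's theorem, forces $\int_E z=\lim_n\int_E f_n\,\mathbf{1}_{\{v_n=-1\}}=0$, so that $z=0$ a.e. on $\{v>-1\}$. Together with $z\le 0$ on $\{v=-1\}$ this is precisely the characterization $z\in\partial\mathbb{I}_{[-1,\infty)}(v)$, completing the existence proof.
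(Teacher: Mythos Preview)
The paper does not prove this theorem at all; it is simply quoted from Brezis--Strauss \cite{BS73}. Your argument is a correct self-contained proof, so there is no conflict to report, but it is worth noting how your route differs from the general one in \cite{BS73}.

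Brezis and Strauss treat $-\Delta v+\beta(v)\ni f$ for an arbitrary maximal monotone graph $\beta$ with $0\in\beta(0)$. Their key a priori estimate is the inequality $\int_D\Phi(z)\,\mathrm{d}x\le\int_D\Phi(f)\,\mathrm{d}x$ for every nonnegative convex $\Phi$ with $\Phi(0)=0$ (this is precisely \cite[Proposition~4]{BS73}, used repeatedly elsewhere in the paper), from which uniform integrability of the multipliers follows via the de la Vall\'ee-Poussin theorem. You instead exploit the very specific structure of $\partial\mathbb{I}_{[-1,\infty)}$ with a \emph{constant} obstacle: the observation that $\Delta v_n=0$ a.e.\ on $\{v_n=-1\}$ (since $D^2 v_n=0$ a.e.\ on level sets for $v_n\in H^2(D)$) yields the pointwise identity $z_n=f_n\mathbf{1}_{\{v_n=-1\}}$ and hence the pointwise domination $|z_n|\le|f_n|$. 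This is sharper and more elementary than the integral inequality of \cite{BS73}, and it makes the Dunford--Pettis step immediate; the price is that your argument is tailored to this particular $\beta$ and would not extend to a general maximal monotone graph, whereas \cite{BS73} covers the full generality.

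Two minor points. In the last step of the identification of $z$, writing $f_n\mathbf{1}_{\{v_n=-1\}}=(f_n-f)\mathbf{1}_{\{v_n=-1\}}+f\mathbf{1}_{\{v_n=-1\}}$ makes the Vitali/dominated-convergence argument cleaner, since the first term goes to zero in $L_1(D)$ while the second is dominated by the fixed $|f|$. In the uniqueness part, concluding that a nonnegative subharmonic function in $W_1^1(D)$ with zero trace vanishes is correct but not entirely trivial in this low-regularity setting; a safe route is to invoke \cite[Lemma~2]{BS73} (which gives $\int_D\mathrm{sign}(w)(-\Delta w)\,\mathrm{d}x\ge 0$ for $w\in D(\Delta_1)$) rather than the maximum principle directly.
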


We now establish the existence of $L_1$-solutions to \eqref{idefix} with the help of a classical monotone scheme. To this end, we introduce the notion of {\it subsolution} and  {\it supersolution} to \eqref{idefix}.

\begin{definition}\label{DD}
(a) A subsolution to \eqref{idefix} is a function $\sigma\in D(\Delta_1)$  with $g_W(\sigma)\in L_1(D)$ for which there is $F_\sigma\in L_1(D)$ such that $F_\sigma\le -\lambda g_W(\sigma)$ in $D$, and $\sigma$ is the unique solution to
\begin{equation*}
-\Delta \sigma +\partial\mathbb{I}_{[-1,\infty)}(\sigma)\owns F_\sigma\ \text{ in } D\,.
\end{equation*}

(b) A supersolution to \eqref{idefix} is a function $\sigma\in D(\Delta_1)$  with $g_W(\sigma)\in L_1(D)$ for which there is $F_\sigma\in L_1(D)$ such that $F_\sigma\ge -\lambda g_W(\sigma)$ in $D$, and $\sigma$ is the unique solution to
\begin{equation*}
-\Delta \sigma +\partial\mathbb{I}_{[-1,\infty)}(\sigma)\owns F_\sigma\ \text{ in } D\,.
\end{equation*}
\end{definition}

We first observe that, for any subsolution $\sigma$ to \eqref{idefix}, we have 
\begin{equation}\label{b}
-1\le \sigma\le 0\quad\text{in}\    D\,,
\end{equation}
 where the first inequality stems from Definition~\ref{DD}~(a) and the second one is due to \cite[Proposition 5]{BS73} by comparison with the zero solution since $F_\sigma\le -\lambda g_W(\sigma)\le 0$ in $D$.

\begin{prop}[{\bf Stationary Solutions}]
\label{P1}
Let $\lambda>0$. Then there is a solution $U_\lambda\in D(\Delta_1)\cap \mathring{H}^1(D)$ to \eqref{idefix} with $-1\le U_\lambda\le 0$ a.e.
Moreover, this solution is maximal in the sense that $U_\lambda\ge \sigma$ in $D$ for any subsolution $\sigma$ in the sense of Definition~\ref{DD}.
Finally, if $1/W\in L_{2p}(D)$ for some $p\in (1,\infty)$, then $U_\lambda\in W_p^2(D)$.
\end{prop}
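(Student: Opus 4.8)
The plan is to construct $U_\lambda$ by the monotone iteration announced in the introduction, started from the top. Set $U_0:=0$ and, given $U_n$ with $-1\le U_n\le 0$, define $U_{n+1}$ as the unique element of $D(\Delta_1)$ furnished by Theorem~\ref{Flea} with right-hand side $f_n:=-\lambda g_W(U_n)$. This is legitimate since $U_n\ge -1$ gives the pointwise bound $g_W(U_n)\le 1/(2W^2)$, and assumption \eqref{W1} ensures $1/W^2\in L_1(D)$, whence $f_n\in L_1(D)$. Two monotonicity facts drive the scheme. First, $g_W$ is nonincreasing in its argument, so $v_1\le v_2$ yields $-\lambda g_W(v_1)\le -\lambda g_W(v_2)$; together with the comparison principle \cite[Proposition~5]{BS73} for $-\Delta+\partial\mathbb{I}_{[-1,\infty)}$, the map sending $v$ to the solution associated with the datum $-\lambda g_W(v)$ is order preserving. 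Second, as $0$ is the solution associated with the datum $0$ and $f_0=-\lambda g_W(0)\le 0$, the same comparison gives $U_1\le U_0$, and an induction then produces a nonincreasing sequence $U_{n+1}\le U_n$. Each $U_n$ with $n\ge1$ has nonpositive datum, so \eqref{b} applies and gives $-1\le U_n\le 0$.

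The passage to the limit is where \cite{BS73} does the decisive work. Being nonincreasing and valued in $[-1,0]$, the sequence $(U_n)$ converges a.e.\ and, by dominated convergence, in $L_1(D)$ to some $U_\lambda$ with $-1\le U_\lambda\le 0$. Since $g_W$ is continuous and $g_W(U_n)\le 1/(2W^2)\in L_1(D)$, dominated convergence also gives $g_W(U_n)\to g_W(U_\lambda)$ in $L_1(D)$, so the pairs $(U_{n+1},f_n)$ converge in $L_1(D)\times L_1(D)$ to $(U_\lambda,-\lambda g_W(U_\lambda))$. As $-\Delta+\partial\mathbb{I}_{[-1,\infty)}$ is $m$-accretive, hence has closed graph, in $L_1(D)$, I conclude that $U_\lambda\in D(\Delta_1)$ and that $U_\lambda$ solves \eqref{idefix}.

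For the $\mathring H^1$ bound I would test the equation with $U_\lambda$ itself, which vanishes on $\partial D$. Writing $\zeta:=-\lambda g_W(U_\lambda)+\Delta U_\lambda\in\partial\mathbb{I}_{[-1,\infty)}(U_\lambda)$, integration by parts formally gives
$$
\int_D |\nabla U_\lambda|^2\,\rd x = -\lambda\int_D g_W(U_\lambda)\,U_\lambda\,\rd x - \int_D \zeta\,U_\lambda\,\rd x .
$$
The last integral is nonpositive, since $\zeta$ is supported in the coincidence set $\{U_\lambda=-1\}$, where $\zeta\le 0$ and hence $\zeta U_\lambda=-\zeta\ge 0$. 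The first term is controlled by $|g_W(U_\lambda)U_\lambda|\le g_W(U_\lambda)\le 1/(2W^2)$, so $\int_D|\nabla U_\lambda|^2\,\rd x\le (\lambda/2)\|1/W\|_2^2<\infty$, and combined with the zero boundary trace inherited from $D(\Delta_1)$ this gives $U_\lambda\in\mathring H^1(D)$. To avoid the circularity of testing with $U_\lambda$ before knowing $U_\lambda\in H^1(D)$, I would perform this estimate on a regularized problem in which $W$ is replaced by $\max\{W,\delta\}$, so that the data are bounded and the resulting obstacle-problem solutions genuinely lie in $\mathring H^1(D)$; the bound $(\lambda/2)\|1/W\|_2^2$ being independent of $\delta$, weak $H^1$ lower semicontinuity then transfers it to $U_\lambda$.

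Maximality and the final regularity are comparatively direct. If $\sigma$ is any subsolution, then $-1\le\sigma\le 0$ by \eqref{b}, so $U_0=0\ge\sigma$; assuming $U_n\ge\sigma$, monotonicity of $g_W$ gives $-\lambda g_W(U_n)\ge -\lambda g_W(\sigma)\ge F_\sigma$, and \cite[Proposition~5]{BS73} applied to the problems solved by $U_{n+1}$ and $\sigma$ yields $U_{n+1}\ge\sigma$; letting $n\to\infty$ gives $U_\lambda\ge\sigma$. Finally, if $1/W\in L_{2p}(D)$ then $\lambda g_W(U_\lambda)\le \lambda/(2W^2)\in L_p(D)$, and standard $W_p^2$-regularity for the obstacle problem with $L_p(D)$ datum and constant obstacle applies: since the obstacle is $-1$, one has $\Delta U_\lambda=0$ a.e.\ on $\{U_\lambda=-1\}$ and $-\Delta U_\lambda=-\lambda g_W(U_\lambda)$ on $\{U_\lambda>-1\}$, so $|\Delta U_\lambda|\le \lambda g_W(U_\lambda)\in L_p(D)$, and elliptic regularity for $-\Delta$ with homogeneous Dirichlet data on the smooth domain $D$ yields $U_\lambda\in W_p^2(D)$. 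The main obstacle throughout is the limited integrability of the data: it is what forces the $L_1$-framework of \cite{BS73} rather than a plain $H^{-1}$ variational argument, and what makes the $\mathring H^1$ estimate require the regularization step in order to be rigorous.
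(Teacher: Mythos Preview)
Your argument is correct and follows the same monotone iteration from $0$ as the paper, with the same use of \cite[Proposition~5]{BS73} for comparison and the same domination $g_W(U_n)\le 1/(2W^2)$ for the limit. Two technical points are handled differently. For the passage to the limit you invoke closedness of the graph of the $m$-accretive operator $-\Delta+\partial\mathbb{I}_{[-1,\infty)}$ in $L_1(D)$; the paper instead fixes an arbitrary subsolution $\sigma$, shows that the auxiliary sequence $\zeta^n:=-\lambda g_W(u^{n-1})+\Delta u^n$ is nonincreasing and bounded below by $\zeta_\sigma\in L_1(D)$, passes to the limit in $\zeta^n$ by monotone convergence, and then identifies $U_\lambda$ with the solution for the datum $-\lambda g_W(U_\lambda)$ via the $\Delta$-contraction estimate in \cite[Proposition~5]{BS73}. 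Your route is shorter; the paper's buys an explicit $L_1$ limit for $\zeta$ without appealing to abstract $m$-accretivity. For the $\mathring H^1$ claim you are actually more careful than the paper, which states it but does not argue it separately; your regularization $W\mapsto\max\{W,\delta\}$ is the natural fix. One small caveat on your $W_p^2$ step: asserting $\Delta U_\lambda=0$ a.e.\ on $\{U_\lambda=-1\}$ already presupposes $W^{2,1}$-type regularity, so the argument is circular as written; the paper sidesteps this by citing \cite[Theorem~1, Corollary~8]{BS73} directly, which yields $U_\lambda\in W_p^2(D)$ from $-\lambda g_W(U_\lambda)\in L_p(D)$.
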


\begin{proof}
Let us first observe that there is at least one subsolution to \eqref{idefix}. Indeed, since $1/W\in L_1(D)$, it follows from Theorem~\ref{Flea} that there exists a unique solution $\sigma_0\in D(\Delta_1)$ to
$$
-\Delta \sigma_0 +\partial\mathbb{I}_{[-1,\infty)}(\sigma_0) \owns -\frac{\lambda}{2W^2}\quad \text{in } \ D\,.
$$
As $\sigma_0\ge -1$ in $D$, one has that $-\lambda/2W^2\le -\lambda g_W(\sigma_0)$ in $D$, so that $\sigma_0$ is a subsolution to \eqref{idefix} in the sense of Definition~\ref{DD}~(a).

Fix now an arbitrary subsolution $\sigma$ with corresponding  $F_\sigma$  and set $u^0:=0$ in $D$. Since $\sigma\le u^0$ in $D$ by \eqref{b}, we have 
$$
F_\sigma\le -\lambda g_W(\sigma)\le -\lambda g_W(u^0)\le 0\quad \text{in }\ D\,.
$$
Hence, if $u^1\in D(\Delta_1)$ denotes the unique solution to
$$
-\Delta u^1 +\partial\mathbb{I}_{[-1,\infty)}(u^1) \owns -\lambda g_W(u^0)\quad\text{in } \ D
$$
given by Theorem~\ref{Flea}, then \cite[Proposition 5]{BS73} implies that $-1\le \sigma\le u^1\le u^0=0$ in $D$ and
$$
\zeta_\sigma:=F_\sigma+\Delta \sigma\le \zeta^1:=-\lambda g_W(u^0)+\Delta u^1\le \zeta^0:=0\quad \text{in } \ D\,.
$$
Arguing by induction yields for each $n\in \N$ the unique solution  $u^{n+1}\in D(\Delta_1)$  to
$$
-\Delta u^{n+1} +\partial\mathbb{I}_{[-1,\infty)}(u^{n+1}) \owns -\lambda g_W(u^{n})\quad \text{in }\ D
$$ 
for which
\begin{align}
-1&\le \sigma\le u^{n+1}\le u^n\le 0 \quad \text{in }\ D \,,\label{est1}\\
\zeta_\sigma &\le \zeta^{n+1}\le \zeta^n\le 0\quad \text{in }\ D \,,\label{est2}
\end{align}
where
\begin{equation}\label{hakim}
\zeta^n:=-\lambda g_W(u^{n-1})+\Delta u^{n}\,,\quad n\ge 1\,.
\end{equation}
Since $\sigma$ and $\zeta_\sigma$ both belong to $L_1(D)$, the ordering properties \eqref{est1} and \eqref{est2} allow us to apply the monotone convergence theorem and obtain that
\begin{equation}\label{sting}
(u^n,\zeta^n)\longrightarrow (U_\lambda,\zeta)\quad \text{in }\ L_1(D,\R^2)\,,
\end{equation}
where, for $x\in D$,
$$
U_\lambda (x):=\inf_{n\ge 0} u^n(x)\qquad\text{and}\qquad \zeta (x):=\inf_{n\ge 0} \zeta^n(x)\,.
$$
Furthermore, by \eqref{est1},
$$
0\le g_W(u^n)\le g_W(u^{n+1})\le \frac{1}{2W^2}\quad \text{in } \ D\,,
$$
and we use once more the monotone convergence theorem to deduce that there is $G\in L_1(D)$ such that
\begin{equation}\label{copeland}
g_W(u^n)\longrightarrow G\quad \text{in } L_1(D)\,.
\end{equation}
Since there is a subsequence $(n_k)_{k\in\N}$ such that 
$$
\big(u^{n_k},g_W(u^{n_k})\big)\longrightarrow (U_\lambda, G)\quad \text{a.e. in }\ D
$$
according to \eqref{sting} and \eqref{copeland}, the continuity of $g_W$ (with respect to $u$) entails that
\begin{equation}\label{summers}
G=g_W(U_\lambda)\,.
\end{equation}
On the one hand, we pass to the limit as $n\rightarrow\infty$ in \eqref{hakim} by using \eqref{sting}-\eqref{summers} to obtain
\begin{equation}\label{incognito}
-\Delta U_\lambda +\zeta=-\lambda g_W(U_\lambda)\quad \text{in } \ D\,.
\end{equation}
On the other hand, let $v\in D(\Delta_1)$ be the unique solution to
$$
-\Delta v+ \partial\mathbb{I}_{[-1,\infty)}(v) \owns -\lambda g_W(U_\lambda)\quad\text{in }\  D\,.
$$
Due to \cite[Proposition 5]{BS73}, we have
$$
\|\Delta u^{n+1}-\Delta u^n\|_1\le 2\lambda \|g_W(u^n)-g_W(U_\lambda)\|_1\,.
$$
Thanks to \eqref{sting}-\eqref{summers}, we may pass to the limit as $n\rightarrow\infty$ in the previous inequality and conclude that
$\Delta U_\lambda = \Delta v\in L_1(D)$. This implies $U_\lambda =v\in D(\Delta_1)$ and we derive from \eqref{incognito} that $\zeta\in \partial\mathbb{I}_{[-1,\infty)}(U_\lambda)$. Consequently, $U_\lambda$ is a solution to \eqref{idefix}. Moreover, $U_\lambda$ is independent of the previously fixed subsolution $\sigma$ (since the sequence $(u^n)$ is) and hence $U_\lambda$ lies above any subsolution.

Finally, if $1/W\in L_{2p}(D)$ for some $p\in (1,\infty)$, then 
$-\lambda g_W(U_\lambda)\in L_p(D)$ so that \cite[Theorem 1, Corollary 8]{BS73} readily imply that $U_\lambda\in W^2_p(D)$.
This completes the proof.
\end{proof}

We now draw several consequences from Proposition~\ref{P1} and begin with the monotonicity of $U_\lambda$ with respect to $\lambda$.

\begin{cor}\label{C1}
If $\lambda_1<\lambda_2$, then $U_{\lambda_1}\ge U_{\lambda_2}$. In particular, if $U_\lambda$ is a zipped state, then $U_{\lambda'}$ is also zipped for any $\lambda'>\lambda$.
\end{cor}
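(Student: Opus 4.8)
The plan is to exploit the maximality property established in Proposition~\ref{P1}. The guiding observation is that, because $g_W$ takes only non-negative values, the maximal stationary solution $U_{\lambda_2}$ associated with the larger parameter $\lambda_2$ is automatically a subsolution to the stationary problem \eqref{idefix} associated with the smaller parameter $\lambda_1$. Once this is seen, the monotonicity follows at once from the fact that $U_{\lambda_1}$ dominates every subsolution.

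To make this precise, I would first recall from Proposition~\ref{P1} that $U_{\lambda_2}\in D(\Delta_1)$ with $g_W(U_{\lambda_2})\in L_1(D)$ and that it is the unique solution to
$$
-\Delta U_{\lambda_2}+\partial\mathbb{I}_{[-1,\infty)}(U_{\lambda_2})\owns F\,,\qquad F:=-\lambda_2 g_W(U_{\lambda_2})\in L_1(D)\,.
$$
Taking $F_\sigma:=F$ in Definition~\ref{DD}~(a), the required inequality $F_\sigma\le -\lambda_1 g_W(U_{\lambda_2})$ reduces to $-\lambda_2 g_W(U_{\lambda_2})\le -\lambda_1 g_W(U_{\lambda_2})$, which holds since $g_W(U_{\lambda_2})\ge 0$ and $\lambda_1<\lambda_2$. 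Hence $U_{\lambda_2}$ is a subsolution to the $\lambda_1$-problem, and the maximality of $U_{\lambda_1}$ yields $U_{\lambda_1}\ge U_{\lambda_2}$ in $D$.

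For the second assertion I would argue on the coincidence set. By the monotonicity just established, $-1\le U_{\lambda'}\le U_\lambda$ in $D$ whenever $\lambda'>\lambda$, so every point $x$ with $U_\lambda(x)=-1$ also satisfies $U_{\lambda'}(x)=-1$; in other words $\mathcal{C}(U_\lambda)\subset\mathcal{C}(U_{\lambda'})$. If $U_\lambda$ is zipped, then $\mathcal{C}(U_\lambda)$ has positive Lebesgue measure, whence so does $\mathcal{C}(U_{\lambda'})$, and therefore $U_{\lambda'}$ is zipped as well.

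There is essentially no hard step in this argument: everything rests on the sign of $g_W$ together with the maximality property. The only point deserving a moment's care is to confirm that $U_{\lambda_2}$ genuinely satisfies all the membership requirements of Definition~\ref{DD}~(a)—namely that it lies in $D(\Delta_1)$, that $g_W(U_{\lambda_2})\in L_1(D)$, and that it solves the associated inclusion uniquely—so that the maximality of $U_{\lambda_1}$ can legitimately be invoked; all of these are supplied directly by Proposition~\ref{P1}.
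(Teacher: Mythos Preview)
Your proof is correct and follows exactly the same route as the paper: show that $U_{\lambda_2}$ is a subsolution to the $\lambda_1$-problem via the inequality $-\lambda_2 g_W(U_{\lambda_2})\le -\lambda_1 g_W(U_{\lambda_2})$, then invoke the maximality of $U_{\lambda_1}$ from Proposition~\ref{P1}. Your write-up is simply more explicit about checking the hypotheses of Definition~\ref{DD}~(a) and spelling out the coincidence-set inclusion for the second claim.
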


\begin{proof}
This follows from Proposition~\ref{P1} by observing that
$$
-\Delta U_{\lambda_2}+\zeta_{\lambda_2} =-\lambda_2 g_W(U_{\lambda_2})\le -\lambda_1 g_W(U_{\lambda_2})\,.
$$
\end{proof}

A monotonicity property with respect to $W$ is also available.

\begin{cor}\label{C1a}
For $j\in\{1,2\}$ let $W_j$ be a non-negative measurable function such that  $1/W_j\in L_2(D)$ with $W_1\le W_2$ in $D$ and let $\lambda>0$. If $U_{\lambda,j}$ denotes the maximal solution to \eqref{idefix} corresponding to $W_j$, then $U_{\lambda,1}\le U_{\lambda,2}$.
\end{cor}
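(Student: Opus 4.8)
The plan is to mimic the argument used for Corollary~\ref{C1} and to exhibit $U_{\lambda,1}$ as a subsolution to the stationary problem \eqref{idefix} associated with $W_2$, so that the maximality property established in Proposition~\ref{P1} immediately yields $U_{\lambda,2}\ge U_{\lambda,1}$.

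First I would record the pointwise monotonicity of $g_W$ with respect to $W$: for any $v\ge -1$ and $0\le W_1\le W_2$ one has $1+v+W_1\le 1+v+W_2$, both sides being non-negative, whence
$$
g_{W_2}(v)=\frac{1}{2(1+v+W_2)^2}\le \frac{1}{2(1+v+W_1)^2}=g_{W_1}(v)\quad\text{in }D\,.
$$
This elementary inequality is the essential ingredient of the whole argument.

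Next I would verify that $\sigma:=U_{\lambda,1}$ satisfies all the requirements of Definition~\ref{DD}~(a) for the problem \eqref{idefix} corresponding to $W_2$. By Proposition~\ref{P1} applied with $W_1$, we have $\sigma\in D(\Delta_1)$, the bound $-1\le\sigma\le 0$ in $D$ from \eqref{b}, and $\sigma$ is the unique solution to
$$
-\Delta\sigma+\partial\mathbb{I}_{[-1,\infty)}(\sigma)\owns F_\sigma:=-\lambda g_{W_1}(\sigma)\in L_1(D)\,.
$$
Since $g_{W_2}(\sigma)\le \tfrac{1}{2W_2^2}\in L_1(D)$ by assumption \eqref{W1} on $W_2$, the integrability requirement $g_{W_2}(\sigma)\in L_1(D)$ is met; and the monotonicity recorded above gives $F_\sigma=-\lambda g_{W_1}(\sigma)\le -\lambda g_{W_2}(\sigma)$ in $D$. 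Hence $\sigma=U_{\lambda,1}$ is indeed a subsolution to the $W_2$-problem.

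Finally, the maximality of $U_{\lambda,2}$ asserted in Proposition~\ref{P1} (applied with $W_2$), namely that $U_{\lambda,2}$ lies above every subsolution of the $W_2$-problem, yields at once $U_{\lambda,2}\ge U_{\lambda,1}$, as claimed. I do not anticipate any genuine obstacle in carrying this out; the only point requiring minimal care is the routine check that each structural condition of Definition~\ref{DD}~(a) holds for $U_{\lambda,1}$, all of which reduce to the pointwise inequality $g_{W_1}\ge g_{W_2}$ together with the integrability already furnished by \eqref{W1} and Proposition~\ref{P1}.
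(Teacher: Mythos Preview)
Your proposal is correct and follows essentially the same approach as the paper: both rely on the pointwise inequality $-\lambda g_{W_1}(U_{\lambda,1})\le -\lambda g_{W_2}(U_{\lambda,1})$ to recognize $U_{\lambda,1}$ as a subsolution of the $W_2$-problem, and then invoke the maximality statement of Proposition~\ref{P1}. The paper's proof is simply a one-line compression of your argument.
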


\begin{proof}
The assumptions imply that
$$
-\lambda g_{W_1}(U_{\lambda,1})\le -\lambda g_{W_2}(U_{\lambda,1}) \quad \text{ in }\ D
$$
and the assertion thus follows from Proposition~\ref{P1}.
\end{proof}

We next turn to the structure of the set of stationary solutions and introduce
$$
\Lambda_z:=\inf\{\lambda>0\,;\, U_\lambda\ \text{is zipped}\}\in [0,\infty]\,.
$$

\begin{cor}\label{C1b}
If $\lambda>\Lambda_z$, then any solution to \eqref{idefix} is zipped.
\end{cor}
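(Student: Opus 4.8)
The goal is to show that $U_\lambda$ is zipped for every $\lambda > \Lambda_z$, where $\Lambda_z = \inf\{\mu > 0 : U_\mu \text{ is zipped}\}$. The plan is to exploit the monotonicity already established in Corollary~\ref{C1}, which states that $\lambda \mapsto U_\lambda$ is decreasing and, crucially, that the zipped property is inherited upward: if $U_\mu$ is zipped, then $U_{\lambda'}$ is zipped for all $\lambda' > \mu$.

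First I would fix $\lambda > \Lambda_z$. By the definition of $\Lambda_z$ as an infimum, there exists some $\mu$ with $\Lambda_z \le \mu < \lambda$ such that $U_\mu$ is zipped. Indeed, if no such $\mu$ existed in the interval $(\Lambda_z, \lambda)$, then every $\mu$ in that range would yield an unzipped $U_\mu$; but the infimum characterization guarantees that zipped states occur arbitrarily close to $\Lambda_z$ from above, so we can select $\mu \in [\Lambda_z, \lambda)$ with $U_\mu$ zipped.

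Next I would invoke the second assertion of Corollary~\ref{C1}: since $U_\mu$ is zipped and $\lambda > \mu$, it follows immediately that $U_\lambda$ is zipped as well. This handles the maximal solution. To upgrade the conclusion to \emph{any} solution to \eqref{idefix}, the key observation is that any stationary solution is in particular a subsolution in the sense of Definition~\ref{DD}~(a): if $w$ solves \eqref{idefix}, then taking $F_w := -\lambda g_W(w)$ gives $F_w \le -\lambda g_W(w)$ trivially, so $w$ is a subsolution. By the maximality assertion in Proposition~\ref{P1}, $w \le U_\lambda$ in $D$. Since $U_\lambda$ is zipped, its coincidence set $\mathcal{C}(U_\lambda) = \{x \in D : U_\lambda(x) = -1\}$ has positive Lebesgue measure; and because $-1 \le w \le U_\lambda$, we have $w(x) = -1$ wherever $U_\lambda(x) = -1$, hence $\mathcal{C}(U_\lambda) \subset \mathcal{C}(w)$ and $\mathcal{C}(w)$ also has positive measure. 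Thus $w$ is zipped.

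The main obstacle, though it is a minor one, is the careful handling of the infimum: one must confirm that zipped states genuinely exist for values of $\lambda$ strictly below the fixed $\lambda$ but at or near $\Lambda_z$, which requires that $\Lambda_z < \infty$ (otherwise the statement is vacuous). This is ensured by Theorem~\ref{statsol}, which asserts $\Lambda_z \in (0,\infty)$. The rest is a direct application of the comparison and maximality machinery of \cite{BS73} already packaged into Proposition~\ref{P1} and Corollary~\ref{C1}.
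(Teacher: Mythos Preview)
Your proof is correct and follows essentially the same approach as the paper's own proof, which consists of a single sentence invoking Corollary~\ref{C1} and the maximality of $U_\lambda$. You have simply unpacked these two ingredients explicitly: the infimum argument to find a zipped $U_\mu$ below $\lambda$, the propagation of the zipped property via Corollary~\ref{C1}, and the observation that any stationary solution is a subsolution and hence lies below $U_\lambda$, forcing its coincidence set to contain $\mathcal{C}(U_\lambda)$.
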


\begin{proof}
This follows immediately from Corollary~\ref{C1} and the maximality of $U_\lambda$ when $\Lambda_z$ is finite.
\end{proof}

We now investigate in more detail the touchdown behavior of  solutions. As we shall see in the next result, zipped states do exist for large values of $\lambda$.

\begin{prop}[\textbf{Zipped Solutions}]\label{P3}
The threshold value $\Lambda_z$ is  finite. 
\end{prop}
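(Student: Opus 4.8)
The plan is to show that $U_\lambda$ is forced to be zipped once $\lambda$ is large enough, which yields $\Lambda_z<\infty$ at once. The natural device, borrowed from the classical touchdown analysis of \eqref{Pe}, is to test the stationary inclusion against the principal Dirichlet eigenfunction. Let $\phi_1$ be the first eigenfunction of $-\Delta$ on $D$ with homogeneous Dirichlet boundary conditions, normalized so that $\phi_1>0$ in $D$, with associated eigenvalue $\mu_1>0$; recall that $\phi_1\in C^\infty(\overline{D})$ and $\phi_1=0$ on $\partial D$ since $\partial D$ is smooth.

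Writing the inclusion from Proposition~\ref{P1} as $-\Delta U_\lambda+\zeta_\lambda=-\lambda g_W(U_\lambda)$ with $\zeta_\lambda\in\partial\mathbb{I}_{[-1,\infty)}(U_\lambda)\subset(-\infty,0]$ and $\Delta U_\lambda,\,g_W(U_\lambda)\in L_1(D)$, I would multiply by $\phi_1$ and integrate over $D$. Since $U_\lambda\in D(\Delta_1)$ and $\phi_1$ is smooth and vanishes on $\partial D$, Green's formula in the $L_1$-framework gives $\int_D(-\Delta U_\lambda)\phi_1\,\rd x=\mu_1\int_D U_\lambda\phi_1\,\rd x$, whence $\int_D\zeta_\lambda\phi_1\,\rd x=-\lambda\int_D g_W(U_\lambda)\phi_1\,\rd x-\mu_1\int_D U_\lambda\phi_1\,\rd x$. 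I would then estimate the right-hand side using $-1\le U_\lambda\le 0$: the lower constraint $U_\lambda\ge -1$ gives $-\mu_1\int_D U_\lambda\phi_1\,\rd x\le\mu_1\int_D\phi_1\,\rd x$, while $U_\lambda\le 0$ yields $g_W(U_\lambda)=[2(1+U_\lambda+W)^2]^{-1}\ge[2(1+W)^2]^{-1}$, so that $\int_D g_W(U_\lambda)\phi_1\,\rd x\ge\tfrac12 c_0$ with $c_0:=\int_D\phi_1(1+W)^{-2}\,\rd x$.

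Combining these bounds gives $\int_D\zeta_\lambda\phi_1\,\rd x\le\mu_1\int_D\phi_1\,\rd x-\tfrac{\lambda}{2}c_0$. The right-hand side becomes negative as soon as $\lambda>2\mu_1\int_D\phi_1\,\rd x/c_0$. For such $\lambda$ one has $\int_D\zeta_\lambda\phi_1\,\rd x<0$, and since $\zeta_\lambda\le 0$ and $\phi_1>0$ in $D$, this forces $\zeta_\lambda<0$ on a set of positive measure. By the graph \eqref{mm}, $\zeta_\lambda(x)<0$ is only possible where $U_\lambda(x)=-1$, so $\mathcal{C}(U_\lambda)$ has positive Lebesgue measure; that is, $U_\lambda$ is zipped. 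Consequently $\Lambda_z\le 2\mu_1\int_D\phi_1\,\rd x/c_0<\infty$.

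The two points requiring care are the justification of Green's formula for $U_\lambda\in D(\Delta_1)$ against the smooth test function $\phi_1$ vanishing on $\partial D$, which is standard for the $L_1$-realization $\Delta_1$ (and can be obtained by pairing $\Delta U_\lambda\in L_1(D)$ with $\phi_1\in L_\infty(D)$), and the strict positivity of $c_0$. The latter is the main obstacle to pin down: it requires $(1+W)^{-2}$ to be positive on a set of positive measure, which holds because $W$ is a finite-valued non-negative function, so the integrand $\phi_1(1+W)^{-2}$ is positive a.e. in $D$ and $c_0>0$. I expect the rest of the argument to be routine.
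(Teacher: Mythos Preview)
Your proof is correct and follows essentially the same approach as the paper: test the stationary inclusion against the principal Dirichlet eigenfunction $\varphi_1$, use $-1\le U_\lambda\le 0$ to bound both terms, and conclude that $\int_D\zeta_\lambda\varphi_1\,\rd x<0$ for $\lambda$ large, forcing $\zeta_\lambda\not\equiv 0$ and hence $|\mathcal{C}(U_\lambda)|>0$. The paper normalizes $\|\varphi_1\|_1=1$ and applies the argument to an arbitrary stationary solution rather than just $U_\lambda$, but the threshold obtained and the logic are identical to yours.
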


\begin{proof}
We argue along the lines of \cite{GPW05, Ka63, Pe02}.  Let $\varphi_1\in H^2(D)\cap \mathring{H}^1(D)$ be the positive eigenfunction of $-\Delta_1$ associated with the positive first eigenvalue $\mu_1$ and satisfying $\|\varphi_1\|_1=1$. Consider $\lambda>0$ and let $u$ be a solution to \eqref{idefix}. Multiplying \eqref{idefix1} by $\varphi_1$ and integrating over $D$ entail
\begin{equation*}
\begin{split}
\mu_1\int_D\varphi_1 u\,\rd x +\int_D \zeta \varphi_1\,\rd x =-\lambda\int_D \varphi_1 g_W(u)\,\rd x\,.
\end{split}
\end{equation*}
Owing to \eqref{b}, we have
\begin{equation*}
\begin{split}
\lambda\int_D \varphi_1 g_W(u)\,\rd x \ge \lambda\int_D \varphi_1 g_W(0)\,\rd x
\end{split}
\end{equation*}
and 
\begin{equation*}
\begin{split}
\int_D\varphi_1 u\,\rd x \ge -\int_D \varphi_1 \,\rd x =-1\,.
\end{split}
\end{equation*}
Therefore
$$
\int_D \zeta \varphi_1\,\rd x \le - \lambda\int_D\varphi_1 g_W(0)\,\rd x +\mu_1 < 0\,,
$$
as soon as
$$
\lambda > \lambda^*:=\frac{\mu_1}{\|\varphi_1 g_W(0)\|_1}\,.
$$
Since $\varphi_1>0$ in $D$, we have thus shown that $\zeta\not\equiv 0$, so that $u$ is a zipped state. In particular, $U_\lambda$ is zipped for $\lambda>\lambda^*$, hence $\Lambda_z\le \lambda^*$.
\end{proof}

 Now we show that the maximal solution is unzipped for small values of $\lambda$.

\begin{prop}[{\bf Unzipped Solutions}]
\label{P2}
The threshold value $\Lambda_z$ is  positive. Furthermore, if $1/W\in L_{2p}(D)$ with $p>d/2$, there is $\lambda_*:=\lambda_*(W)>0$ such that for $\lambda\in (0,\lambda_*)$ there is $\omega_\lambda>0$ such that \emph{any} solution $u$ to \eqref{idefix} satisfies 
$$
u\ge -1 +\omega_\lambda \ \text{ in $D$}.
$$ 
In particular, all states are unzipped for $\lambda\in (0,\lambda_*)$.
\end{prop}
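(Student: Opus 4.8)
The plan is to bound any solution from below by the solution of a \emph{linear} Dirichlet problem obtained by freezing $g_W$ at its largest admissible value. Any solution $u$ to \eqref{idefix} satisfies $-1\le u\le 0$, hence $g_W(u)\le g_W(-1)=1/(2W^2)$, and I therefore introduce the unique $\phi_\lambda\in D(\Delta_1)$ solving
\begin{equation*}
-\Delta\phi_\lambda=-\frac{\lambda}{2W^2}\ \text{ in }\ D\,,\qquad \phi_\lambda=0\ \text{ on }\ \partial D\,,
\end{equation*}
which is well defined since $1/W^2\in L_1(D)$ by \eqref{W1}. Note that $\phi_\lambda=\lambda\phi_1$, where $\phi_1$ solves the same problem with $\lambda=1$, and that $\phi_\lambda\le 0$.

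First I would show that $u\ge\phi_\lambda$ in $D$ for \emph{every} solution $u$ to \eqref{idefix}. Writing $-\Delta u+\zeta=-\lambda g_W(u)$ with $\zeta\in\partial\mathbb{I}_{[-1,\infty)}(u)$, the graph \eqref{mm} forces $\zeta\le 0$; combining this with $g_W(u)\le g_W(-1)=1/(2W^2)$ gives $-\Delta u=-\lambda g_W(u)-\zeta\ge-\lambda/(2W^2)=-\Delta\phi_\lambda$. Thus $-\Delta(u-\phi_\lambda)\ge 0$ in $D$ with $u-\phi_\lambda=0$ on $\partial D$, and the maximum principle for $-\Delta_1$ (cf.\ \cite[Proposition~5]{BS73}) yields $u\ge\phi_\lambda$.

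For the quantitative statement I would invoke elliptic regularity. If $1/W\in L_{2p}(D)$ with $p>d/2$, then $1/W^2\in L_p(D)$, so $\phi_1\in W_p^2(D)$; since $2-d/p>0$, the Sobolev embedding $W_p^2(D)\hookrightarrow C(\overline{D})$ shows that $\phi_1$ is bounded, and I set $M:=\|\phi_1\|_\infty<\infty$. Then $u\ge\phi_\lambda=\lambda\phi_1\ge-\lambda M$ for every solution $u$, and choosing $\lambda_*:=1/M$ this reads, for $\lambda\in(0,\lambda_*)$, as $u\ge-1+\omega_\lambda$ with $\omega_\lambda:=1-\lambda M>0$. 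This is the desired uniform bound; in particular $\mathcal{C}(u)=\emptyset$, so every solution is unzipped and $\Lambda_z\ge\lambda_*>0$.

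The step I expect to be the main obstacle is the \emph{unconditional} positivity of $\Lambda_z$, i.e.\ under \eqref{W1} alone. The comparison above still gives the inclusion $\mathcal{C}(U_\lambda)\subseteq\{\phi_\lambda\le-1\}=\{\phi_1\le-1/\lambda\}$, and testing \eqref{idefix1} with $U_\lambda$ produces the a priori bound $\|\nabla U_\lambda\|_2^2\le\tfrac{\lambda}{2}\|1/W\|_2^2$, whence $|\mathcal{C}(U_\lambda)|\le\|U_\lambda\|_2^2\to 0$ as $\lambda\to 0$. The difficulty is to upgrade this smallness into the \emph{exact} vanishing of the coincidence set for some $\lambda>0$. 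This is immediate once $\phi_1$ is bounded below: for $d=1$ it holds unconditionally since $D(\Delta_1)\hookrightarrow C(\overline{D})$, whereas for $d\ge 2$ the essential boundedness of $\phi_1$ is precisely what the stronger integrability $1/W\in L_{2p}(D)$, $p>d/2$, secures. Hence the positivity of $\Lambda_z$ reduces to a bounded-below control of the comparison function $\phi_1$, which is exactly where the integrability of $1/W$ enters.
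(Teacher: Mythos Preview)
Your argument for the second part of the proposition --- the uniform lower bound $u\ge -1+\omega_\lambda$ when $1/W\in L_{2p}(D)$, $p>d/2$ --- is correct and is essentially identical to the paper's: introduce the linear comparison function $\phi_\lambda=\lambda\phi_1$ solving $-\Delta\phi_\lambda=-\lambda/(2W^2)$, use $W_p^2(D)\hookrightarrow C(\overline D)$ to get $\|\phi_1\|_\infty<\infty$, and conclude by the $L_1$-comparison principle \cite[Proposition~5]{BS73}.

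There is, however, a genuine gap in the first part. The proposition asserts $\Lambda_z>0$ under \eqref{W1} \emph{alone}, and you explicitly acknowledge that your comparison with $\phi_1$ cannot close this for $d\ge 2$ without the stronger integrability. Your attempts via $|\mathcal{C}(U_\lambda)|\to 0$ or $\{\phi_1\le -1/\lambda\}$ do not yield $\mathcal{C}(U_\lambda)=\emptyset$ for any fixed $\lambda>0$. The paper closes this gap by a different comparison: rather than the linear problem with right-hand side $-\lambda/(2W^2)$, it invokes the \emph{nonlinear} unconstrained MEMS problem with $W\equiv 0$,
\[
-\Delta V_\lambda = -\lambda g_0(V_\lambda) \ \text{ in } D\,,\qquad V_\lambda=0\ \text{ on }\partial D\,,
\]
for which the classical results of \cite{Pe02,GhG06} provide a solution $V_\lambda\in H^2(D)\cap\mathring{H}^1(D)$ with $V_\lambda>-1$ in $D$ for all $\lambda\in(0,\lambda_0)$, some $\lambda_0>0$. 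Since $W\ge 0$ gives $g_0(V_\lambda)\ge g_W(V_\lambda)$, the function $V_\lambda$ is a subsolution to \eqref{idefix} in the sense of Definition~\ref{DD}, and the maximality of $U_\lambda$ from Proposition~\ref{P1} yields $U_\lambda\ge V_\lambda>-1$; hence $U_\lambda$ is unzipped and $\Lambda_z\ge\lambda_0>0$. The point is that this comparison function is built from the $W=0$ problem and so requires no integrability of $1/W$ whatsoever --- the assumption \eqref{W1} is used only to make sense of \eqref{idefix} itself.
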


\begin{proof}
Firstly, it follows from \cite{Pe02,GhG06}  that there is $\lambda_0>0$ such that the boundary value problem
$$
-\Delta V_\lambda=-\lambda g_0(V_\lambda)\quad\text{in $D$}\,,\qquad V_\lambda=0\quad\text{on $\partial D$}
$$
has a solution $V_\lambda\in H^2(D)\cap \mathring{H}^1(D)$ satisfying
 $V_\lambda>-1$ in $D$ for all $\lambda\in (0,\lambda_0)$. Since $g_0(V_\lambda)\ge g_W(V_\lambda)$ in $D$, we deduce that
$V_\lambda$ is a subsolution to \eqref{idefix} so that Proposition~\ref{P1} implies that $U_\lambda\ge V_\lambda$  in $D$. Therefore, $\Lambda_z\ge \lambda_0$.

Secondly, if $1/W\in L_{2p}(D)$ with $p>d/2$, then there exists a unique solution $v_\lambda\in W_p^2(D)\hookrightarrow C(\bar D)$ to 
$$
- \Delta v_\lambda =   -\frac{\lambda}{2W^2} \quad\text{in $D$}\,,\qquad
v_\lambda =0\quad\text{on $\partial D$}
$$
satisfying
$$
v_\lambda = \lambda v_1 \ge -\lambda \|v_{1}\|_\infty>-1
$$ 
in $D$ provided that $\lambda\in (0,\lambda_*)$ for some $\lambda_*$ sufficiently small. It then remains to note that
$$
-g_W(u)\ge -\frac{1}{2W^2} \quad\text{a.e. in $D$}
$$
for any solution $u$ to \eqref{idefix} so that $u\ge v_\lambda$ a.e. in $D$ according to \cite[Proposition~5]{BS73}.
\end{proof}

Note that the maximal solution $U_\lambda$ is unzipped for $\lambda\in (0,\Lambda_z)$ while it is zipped for $\lambda\in (\Lambda_z,\infty)$ according to Proposition~\ref{P3} and Proposition~\ref{P2}. Consequently, Theorem~\ref{statsol} is now a consequence of the preceding observations.

\medskip

A key issue is whether or not unzipped and zipped states may coexist for a given value of $\lambda$. Introducing
$$
\Lambda_u:=\inf\{\lambda>0\,;\, \text{there is a zipped state to \eqref{idefix}}\}\in [0,\Lambda_z]\,,
$$
it follows from Proposition~\ref{P2} that $\Lambda_u>0$ at least if $1/W\in L_{2p}(D)$ for $p>d/2$. Coexistence of zipped and unzipped states could only take place in the intermediate range $ [\Lambda_u, \Lambda_z]$ provided this interval is non-empty. This issue  will be addressed in the next section when $d=1$ and $W\equiv const$. In the related model studied in \cite{GB01} this phenomenon seems indeed to occur according to the simulations performed therein.

\section{A Complete Characterization of Stationary Solutions when $d=1$ and $W=const$}\label{Sec3}

We now derive a complete characterization of the solutions to \eqref{idefix} in dimension $d=1$ when \mbox{$W\equiv const >0$}. Without loss of generality we let $D=(-1,1)$. We first state a simple characterization of zipped states.

\begin{lem}\label{L11}
If $u$ is a zipped state to \eqref{idefix} with $D=(-1,1)$, then there are $0<b<a<1$ such that $\mathcal{C}(u)=[b,a]$, $u'(b)=u'(a)=0$, and $u(x)>-1$ for $x\in (-1,b)\cup (a,1)$.
\end{lem}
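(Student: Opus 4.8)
The plan is to combine the one-dimensional regularity of a zipped state with the strict convexity that the equation forces on the non-coincidence region. First I would record that, since $W$ is a positive constant, $1/W\in L_{2p}(D)$ for every $p\in(1,\infty)$; the regularity part of Proposition~\ref{P1} (resting on \cite[Theorem~1, Corollary~8]{BS73}) then gives $u\in W_p^2(D)$ for all $p$, hence $u\in C^1([-1,1])$ by the one-dimensional Sobolev embedding. Together with the bounds $-1\le u\le 0$ (the lower one being the constraint, the upper one by comparison with the zero solution, cf.\ \cite[Proposition~5]{BS73}) and the boundary values $u(\pm1)=0$, this shows that $\mathcal{C}(u)=\{x\in(-1,1):u(x)=-1\}$ is closed and compactly contained in $(-1,1)$ (because $u(\pm1)=0\neq-1$), and that it has positive measure since $u$ is zipped.

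Next I would exploit the equation on the open set $\{u>-1\}$. There the maximal monotone graph satisfies $\partial\mathbb{I}_{[-1,\infty)}(u)=\{0\}$, so the inclusion \eqref{idefix1} reduces to the classical identity $u''=\lambda g_W(u)=\lambda/\big(2(1+u+W)^2\big)>0$. Consequently $u$ is strictly convex on each connected component of $\{u>-1\}$. This convexity is the mechanism driving the whole statement.

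The core step is then to deduce that $\mathcal{C}(u)$ is a single interval. The complement $(-1,1)\setminus\mathcal{C}(u)$ is open, hence a disjoint union of intervals of two kinds: boundary components of the form $(-1,b)$ or $(a,1)$, and interior components $(c,d)$ with $u(c)=u(d)=-1$ and $u>-1$ inside. I would rule out the interior kind using strict convexity: a strictly convex function equal to $-1$ at both endpoints of $(c,d)$ lies strictly below its constant chord $\equiv-1$ throughout $(c,d)$, contradicting $u\ge-1$. Since $u(\pm1)=0$ places $\pm1$ in the complement, each lies in a boundary component, and having excluded interior ones, the complement is precisely $(-1,b)\cup(a,1)$ with $\mathcal{C}(u)=[b,a]$, where $-1<b\le a<1$; positivity of the measure forces $b<a$, and $u>-1$ on $(-1,b)\cup(a,1)$ by construction. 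I expect this convexity/chord exclusion, together with the bookkeeping that the complement collapses to exactly two boundary intervals, to be the main (though short) obstacle.

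Finally, the derivative conditions follow from regularity: $u$ attains its global minimum value $-1$ at the interior points $b$ and $a$, so $u\in C^1([-1,1])$ yields $u'(b)=u'(a)=0$. As a refinement that also pins down the location of $[b,a]$, one may multiply $u''=\lambda g_W(u)$ by $u'$ and integrate; using $u=-1$, $u'=0$ at the coincidence endpoints gives the first integral $(u')^2=\lambda(1+u)/\big(W(1+u+W)\big)$ on both boundary components. As this relation is identical on $(-1,b)$ and $(a,1)$, the two components have equal length $\int_{-1}^{0}\big(W(1+u+W)/(\lambda(1+u))\big)^{1/2}\,\rd u$, which forces $b=-a$ and places the coincidence interval symmetrically inside $(-1,1)$.
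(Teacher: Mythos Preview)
Your proof is correct and takes a genuinely different, arguably cleaner route than the paper. Both proofs begin by establishing $u\in C^1([-1,1])$ (the paper invokes \cite[II.~Theorem~7.1]{KS}, you use the $W_p^2$-regularity from \cite[Theorem~1, Corollary~8]{BS73} and Sobolev embedding; note that Proposition~\ref{P1} states this only for $U_\lambda$, but you correctly identify that the underlying BS73 argument applies to any solution). The divergence is in showing that $\mathcal{C}(u)$ is a single interval. The paper argues forward from the first touchdown point $b$: assuming $u>-1$ immediately to the right of $b$, it exploits $u''=\lambda g_W(u)>0$ there to get $u'>0$, then bootstraps this to $u'>0$ on all of $[b_n,1)$, forcing $b$ to be the only touchdown point and contradicting zippedness; having thus shown $u\equiv-1$ on a right-neighborhood of $b$, it then defines $a$ as the right endpoint of that plateau and repeats the argument. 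Your approach instead decomposes $\{u>-1\}$ into open subintervals and kills any interior component $(c,d)$ in one stroke: strict convexity on $[c,d]$ together with $u(c)=u(d)=-1$ forces $u<-1$ on $(c,d)$, which is impossible. This chord argument is shorter and more conceptual; the paper's ODE-shooting argument is more hands-on and requires a small bootstrap that is left somewhat implicit. Your final first-integral computation showing $b=-a$ is an extra (correct) observation not asserted in Lemma~\ref{L11}; the paper obtains this symmetry later in Corollary~\ref{C100} by comparing the two shooting problems directly.
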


\begin{proof}
According to \cite[II.~Theorem~7.1]{KS}, any solution $u$ to \eqref{idefix} belongs to $C^1([-1,1])$. Let $u$ be a zipped state. Owing to $u(-1)=0$ there is a $b\in (0,1)$ such that $u(x)>-1$ for $x\in [-1,b)$ and $u(b)=-1$. Moreover, $u'(b)=0$ since $u'$ is continuous and $b$ is a minimum point of $u$. Assume now for contradiction that there are $b_n\searrow b$ such that $u(b_n)>-1$. Then $u''(b_n)= \lambda g_W(u)(b_n)>0$ so that $u'(b_n)>0$. Hence
$$
u'(x)=u'(b_n)+\int_{b_n}^x \lambda g_W(u)(y)\,\rd y \ge u'(b_n) >0\,,\quad x \in [b_n,1)\,,
$$
implying that $b$ is the only point at which $u$ takes the value $-1$, contradicting that $u$ is a zipped state. Consequently, $u(x)=-1$ for $x$ in a right-neighborhood of $b$. Setting
$$
a:=\sup\{x\in (b,1)\,;\, u(y)=-1\text{ for } y\in [b,x]\} <1\,,
$$
there exist $a_n\searrow  a$ such that $u(a_n)>-1$. The same argument as above shows that $u'(x)>0$ for $x\in (a_n,1)$. This implies the statement.
\end{proof}

\begin{rem}
Lemma~\ref{L11} holds true for any continuous non-negative function $W$.
\end{rem}

Introducing
\begin{equation*}
\varphi(r):=\sqrt{r(1-r)}+(1-r)^{3/2}\log(1+\sqrt{r})-\frac{1}{2} (1-r)^{3/2}\log (1-r)\,,\qquad r\in (0,1)\,,
\end{equation*}
and
\begin{equation*}
\Lambda_*(r):= (1+r)^3 \varphi\left(\frac{1}{1+r}\right)^2\,,\quad r\ge 0\,,
\end{equation*}
we can give a complete characterization of the solutions to \eqref{idefix} when $d=1$ and $W=const>0$ in form of a case-by-case analysis.

\begin{thm}\label{T44}
 Let $d=1$ and $W=const>0$. There is a unique $r_0\in (0,1)$ such that $\varphi'(r_0)=0$, and the solutions to \eqref{idefix} are characterized as follows. 

{\bf (I)} For $1/(1+W) \le r_0$, the following possibilities arise:
\begin{itemize}

\item[(i)] If $\lambda>\Lambda_*(W)$, then there is no unzipped state and a unique zipped state.

\item[(ii)] If $\lambda=\Lambda_*(W)$, then there is a unique unzipped state that touches down on $-1$ at exactly one point, but no zipped state.

\item[(iii)] If $\lambda<\Lambda_*(W)$, then there is a unique unzipped state,  but no zipped state.
\end{itemize}

{\bf (II)} For $1/(1+W) > r_0$, the following possibilities arise:
\begin{itemize}

\item[(i)] If $\lambda>(1+W)^3\varphi(r_0)^2$, then there is no unzipped state, but a unique zipped state.

\item[(ii)] If $\lambda=(1+W)^3\varphi(r_0)^2$, then there are a unique unzipped and a unique zipped state.

\item[(iii)] If $\lambda\in \left(\Lambda_*(W),(1+W)^3\varphi(r_0)^2\right)$, then there are two unzipped states and a unique zipped state.

\item[(iv)] If $\lambda=\Lambda_*(W)$, then there are two unzipped states, one touching down exactly at one point, but no zipped state.

\item[(v)] If $\lambda<\Lambda_*(W)$, then there are two unzipped states, but no zipped state.
\end{itemize}
\end{thm}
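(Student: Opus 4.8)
The plan is to reduce \eqref{idefix} in the case $d=1$, $D=(-1,1)$, $W\equiv const>0$ to an elementary phase–plane/quadrature analysis, treating unzipped and zipped states separately, and then to read off the whole case distinction from the qualitative behaviour of the explicit function $\varphi$.

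First, for an \emph{unzipped} state $u$ one has $u>-1$ throughout, so $\partial\mathbb{I}_{[-1,\infty)}(u)=\{0\}$ and $u$ solves the autonomous ODE $u''=\lambda g_W(u)=\lambda/[2(1+u+W)^2]$ on $(-1,1)$ with $u(\pm1)=0$. Since the right-hand side is positive, $u$ is strictly convex, hence attains an interior minimum; and because the conserved energy $\tfrac12(u')^2-\Lambda(u)$ (with $\Lambda'=\lambda g_W$) is even in $u'$, the descending and ascending arcs are mirror images. Thus every unzipped state is symmetric about $x=0$, strictly decreasing on $(-1,0)$ and increasing on $(0,1)$, and is uniquely determined by its depth $m:=u(0)\in(-1,0)$. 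Multiplying by $u'$ and integrating gives the first integral $(u')^2=\lambda\big(\tfrac1{1+m+W}-\tfrac1{1+u+W}\big)$, and integrating $\rd u/u'$ over the half-interval $(0,1)$ yields the time map
\begin{equation*}
\sqrt\lambda=\int_m^0\frac{\rd u}{\sqrt{\tfrac1{1+m+W}-\tfrac1{1+u+W}}}\,.
\end{equation*}
The substitution $1+u+W=(1+W)\xi$ turns this elementary integral into exactly $(1+W)^{3/2}\varphi(r)$ with $r:=-m/(1+W)\in(0,1/(1+W))$; that is, $u$ is an unzipped state if and only if $\lambda=(1+W)^3\varphi(r)^2$ for some $r\in(0,1/(1+W))$, the endpoint $r=1/(1+W)$ (i.e.\ $m=-1$) corresponding to touchdown at the single point $x=0$.

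Next I would treat \emph{zipped} states with the same tool. By Lemma~\ref{L11} the coincidence set is a single interval, and the equality of the two lateral quadratures — each governed by the same ODE with the touchdown boundary data $u=-1$, $u'=0$ at the inner endpoint and $u=0$ at the outer one — forces it to be symmetric, $\mathcal{C}(u)=[-c,c]$ with $c\in(0,1)$. The first integral on the right leg now carries the touchdown normalisation $(u')^2=\lambda\big(\tfrac1W-\tfrac1{1+u+W}\big)$, and the same substitution gives the leg length $1-c=\sqrt{\Lambda_*(W)/\lambda}$. Hence a zipped state exists if and only if $\lambda>\Lambda_*(W)$, and it is then unique. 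This already settles, in every case, the count of zipped states.

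The analytic core, which I expect to be the main obstacle, is the qualitative study of $\varphi$ on $(0,1)$. One checks readily that $\varphi(0)=\varphi(1)=0$, that $\varphi>0$ in $(0,1)$, and that $\varphi'(0^+)=+\infty$ while $\varphi'(1^-)=-\infty$, so $\varphi$ has at least one interior maximum. The delicate point is to prove that the critical point is \emph{unique}, i.e.\ that $\varphi$ is unimodal; I would establish this by differentiating the explicit expression and showing that every interior zero of $\varphi'$ is a strict local maximum (that is, $\varphi'(r)=0\Rightarrow\varphi''(r)<0$), which together with the boundary behaviour rules out more than one critical point and identifies $r_0$. This is where the somewhat intricate algebra of $\varphi'$ and $\varphi''$ must be controlled, possibly after a monotone change of variable to tame the $(1-r)^{3/2}\log$ terms. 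Once this is in hand, the assembly is mechanical: the number of unzipped states equals the number of $r\in(0,1/(1+W))$ solving $(1+W)^3\varphi(r)^2=\lambda$, so everything hinges on the position of the touchdown value $1/(1+W)$ relative to the maximiser $r_0$. When $1/(1+W)\le r_0$ (case {\bf I}) the time map $r\mapsto(1+W)^3\varphi(r)^2$ is monotone on the whole admissible range, with maximal value $\Lambda_*(W)$ attained at touchdown; intersecting with the level $\lambda$ and combining with the zipped-state count yields the alternatives in {\bf (I)}. When $1/(1+W)>r_0$ (case {\bf II}) the admissible range straddles the peak, the time map folds with maximum $(1+W)^3\varphi(r_0)^2$ and value $\Lambda_*(W)$ at the touchdown endpoint; the two thresholds $\Lambda_*(W)$ and $(1+W)^3\varphi(r_0)^2$ then produce the finer subdivision of {\bf (II)}, once more supplemented by the zipped-state count.
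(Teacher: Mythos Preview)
Your approach is essentially the paper's: both reduce unzipped states to a time-map identity $\lambda=(1+W)^3\varphi(m/(1+W))^2$ via the first integral, treat zipped states by the same quadrature on one leg (obtaining the leg length $1-a=\sqrt{\Lambda_*(W)/\lambda}$ and hence existence iff $\lambda>\Lambda_*(W)$), and then read off the case distinction from the shape of $\varphi$ on $(0,1/(1+W)]$.

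The one place where you diverge is the unimodality of $\varphi$, which you flag as the main obstacle and propose to handle via ``$\varphi'(r)=0\Rightarrow\varphi''(r)<0$''. The paper takes a more direct route that avoids this implication altogether: it factors $\varphi'(r)=(1-r)^{1/2}\psi(r)$ and then computes $\psi'(r)$ explicitly, observing that each of its four terms is negative on $(0,1)$; since $\psi(0^+)=+\infty$ and $\psi(1^-)=-\infty$, strict monotonicity of $\psi$ gives a unique zero, hence a unique $r_0$. This factorisation is exactly the ``taming'' of the $(1-r)^{3/2}\log$ terms you were anticipating, and is simpler than tracking both $\varphi'$ and $\varphi''$ simultaneously.
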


The value of $r_0$ is approximately $0.388346$. Theorem~\ref{T44} shows that the structure of stationary solutions is very sensitive with respect to the value of $\lambda$. In case (I) (corresponding to large values of $W$), there is no coexistence of zipped and unzipped states. However, in case (II) (corresponding to small values of $W$) there is an interval for $\lambda$ for which there is coexistence. This is in accordance with the numerical findings of \cite{GB01} for a related problem (see Figure~3 therein). That the structure of stationary solutions is very sensitive with respect to the value of $\lambda$ has also been observed in related MEMS models without constraint but including a quasilinear diffusion given by the mean curvature, see \cite{BP12,PX15,CHW13}.

\medskip

To prove Theorem~\ref{T44} we first characterize  the zipped states of \eqref{idefix}. To this end we investigate the shooting problem of finding $a\in (-1,1)$ such that there is a solution $u$ to
\begin{subequations}\label{idefixb} 
\begin{align}
- u''&=   -\lambda g_W(u) \,,\qquad x\in (a,1)\,, \label{idefix1b}\\
u(a)=-1&\,,\qquad u'(a)=0\,,\qquad u(1)=0\,,&&  \label{idefix3b}
\end{align}
\end{subequations}
where $g_W$ is given by \eqref{g}. The next lemma discusses its solvability completely.

\begin{lem}\label{31}
The shooting problem~\eqref{idefixb} with $a\in (-1,1)$ has a solution if and only if the constraint
\begin{equation}\label{c4}
0<\frac{\Lambda_*(W)}{4}<\lambda
\end{equation}
is satisfied. In that case, $a$ is uniquely given by
\begin{equation}\label{c44}
a=1-\sqrt{\frac{\Lambda_*(W)}{\lambda}}\,.
\end{equation}
\end{lem}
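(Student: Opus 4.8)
The plan is to turn the second-order boundary value problem \eqref{idefixb} into a first-order separable one by means of the energy first integral, to integrate it explicitly, and to recognize the resulting quadrature as the quantity $\Lambda_*(W)$; the formula \eqref{c44} and the constraint \eqref{c4} then follow by elementary algebra.

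First I would note that any solution $u$ of \eqref{idefixb} is strictly increasing on $(a,1]$. Indeed, as long as $u>-1$ the reaction term $g_W(u)$ is finite and positive, so \eqref{idefix1b} gives $u''=\lambda g_W(u)>0$; thus $u'$ is strictly increasing and, since $u'(a)=0$, we get $u'>0$ on $(a,1]$ and $-1\le u<0$ throughout, keeping $1+u+W\ge W>0$ along the whole trajectory. Multiplying \eqref{idefix1b} by $u'$ and using that
$$\frac{\rd}{\rd x}\left[\frac12(u')^2+\frac{\lambda}{2(1+u+W)}\right]=0,$$
the bracket is constant; evaluating it at $x=a$, where $u=-1$ and $u'=0$, identifies the constant as $\lambda/(2W)$. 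Solving for $(u')^2$ and simplifying yields
$$(u')^2=\frac{\lambda(1+u)}{W(1+u+W)}.$$
Since $u'>0$, I separate variables, integrate over $(a,1)$ with $u$ running from $-1$ to $0$, and substitute $s=1+u$ to reach the quadrature
$$\int_0^1\sqrt{\frac{s+W}{s}}\,\rd s=\sqrt{\frac{\lambda}{W}}\,(1-a).$$

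The computational heart, and the step I expect to be the main obstacle, is the explicit evaluation of this integral together with its identification with $\Lambda_*(W)$. The substitution $s=\tau^2$ recasts the left-hand side as $2\int_0^1\sqrt{\tau^2+W}\,\rd\tau$, a standard integral with primitive $\tfrac{\tau}{2}\sqrt{\tau^2+W}+\tfrac{W}{2}\log(\tau+\sqrt{\tau^2+W})$, whence
$$\int_0^1\sqrt{\frac{s+W}{s}}\,\rd s=\sqrt{1+W}+W\log\!\big(1+\sqrt{1+W}\big)-\frac{W}{2}\log W.$$
A routine but somewhat tedious rewriting of $\varphi\big(1/(1+W)\big)$, using $1-1/(1+W)=W/(1+W)$ and $\log(1+\sqrt r)=\log(1+\sqrt{1+W})-\tfrac12\log(1+W)$ for $r=1/(1+W)$ (so that the $\log(1+W)$ contributions cancel), shows that $(1+W)^{3/2}\varphi\big(1/(1+W)\big)=\sqrt{W}\int_0^1\sqrt{(s+W)/s}\,\rd s$, that is,
$$\int_0^1\sqrt{\frac{s+W}{s}}\,\rd s=\sqrt{\frac{\Lambda_*(W)}{W}}.$$

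Combining the last two displays gives $\sqrt{\Lambda_*(W)}=\sqrt{\lambda}\,(1-a)$, which is precisely \eqref{c44} after solving for $a$, and shows that $a$ is uniquely determined. Since $\lambda,\Lambda_*(W)>0$ one always has $a<1$, so the admissibility requirement $a\in(-1,1)$ reduces to $a>-1$, i.e. $\sqrt{\Lambda_*(W)/\lambda}<2$, which is exactly \eqref{c4}; this settles necessity. For the converse, assuming \eqref{c4} I define $a$ by \eqref{c44}, note $a\in(-1,1)$, and construct $u$ by solving $u'=\sqrt{(\lambda/W)(1+u)/(1+u+W)}$ with $u(a)=-1$, selecting the nontrivial increasing branch leaving the non-Lipschitz point $u=-1$ (where locally $1+u\sim\lambda(x-a)^2/(4W^2)$, so that $u'(a)=0$ is automatic). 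Differentiating the first integral shows that this $u$ solves \eqref{idefix1b}, while the quadrature identity guarantees that it attains $u=0$ exactly at $x=1$, producing a genuine solution of \eqref{idefixb}.
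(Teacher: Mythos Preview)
Your proposal is correct and follows essentially the same route as the paper: both derive the energy first integral $(u')^2=\lambda(1+u)/[W(1+u+W)]$, separate variables, evaluate the quadrature $\int_0^1\sqrt{(s+W)/s}\,\rd s$ via the substitution $s=\tau^2$ (the paper uses $y=\sqrt{1+z}$, which is the same), and identify the result with $\sqrt{\Lambda_*(W)/W}$. Your write-up is slightly more explicit than the paper on two points---the verification that the computed integral matches $(1+W)^{3/2}\varphi(1/(1+W))$, and the handling of the non-Lipschitz point $u=-1$ in the converse direction---but these are elaborations rather than a different approach.
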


\begin{proof}
Consider $a\in (-1,1)$ such that \eqref{idefixb} has a solution $u$ on $[a,1]$. Multiplying \eqref{idefix1b} by $2 u'$ yields
$$
\frac{\rd}{\rd x} (u')^2=\frac{\lambda u'}{(1+u+W)^2}=-\lambda \frac{\rd}{\rd x} \left(\frac{1}{1+u+W}\right) \,.
$$
Integrating this equality and using $u'\ge 0$ due to the convexity of $u$ we derive
$$
\left(\frac{1+u+W}{1+u}\right)^{1/2} u'=\left(\frac{\lambda}{W}\right)^{1/2}\,.
$$
Integrating then this relation from $a$ to $1$ implies the constraint
\begin{equation}\label{c}
\left(\frac{W}{\lambda}\right)^{1/2}\int_{-1}^0 \left(\frac{1+z+W}{1+z}\right)^{1/2}\,\rd z=1-a \in (0,2)\,.
\end{equation}
Observe that the substitution $y=\sqrt{1+z}$ gives 
\begin{equation*}
\begin{split}
\int_{-1}^0 \left(\frac{1+z+W}{1+z}\right)^{1/2}\,\rd z&=2\int_0^1 \left(y^2+W\right)^{1/2}\,\rd y\\
&= \left( y\sqrt{y^2+W}+W\log\left(y+\sqrt{y^2+W}\right)\right)\Big\vert_{y=0}^{y=1}
\\
&=\sqrt{1+W}+W\log\left(1+\sqrt{1+W}\right)-\frac{1}{2} W\log W\,.
\end{split}
\end{equation*}
Combining this identity with \eqref{c}, we deduce that the shooting problem \eqref{idefixb} has a solution provided that
\begin{equation*}
\lambda^{1/2} (1-a)= \sqrt{W(1+W)}+W^{3/2}\log\left(1+\sqrt{1+W}\right)-\frac{1}{2} W^{3/2}\log W\\
=\sqrt{\Lambda_*(W)}\,.
\end{equation*}
This shows that \eqref{c4} is a necessary condition for the solvability of \eqref{idefixb} and that $a$ is given  by \eqref{c44}. If \eqref{c4} is satisfied, then we may define $a\in (-1,1)$ by \eqref{c44} and thereby obtain a solution to \eqref{idefixb}.
\end{proof}

\begin{cor}\label{C100}
Any solution to \eqref{idefix} is even on $(-1,1)$. Moreover,  \eqref{idefix} admits a zipped state if and only if the constraint
\begin{equation}\label{c4x}
0<\Lambda_*(W)<\lambda
\end{equation}
is satisfied. In that case, the zipped state is unique and its coincidence set is $[-a,a]$ with $a$ given by \eqref{c44}.
\end{cor}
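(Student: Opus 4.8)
The plan is to split the statement into the symmetry claim and the characterization of zipped states, exploiting the phase-plane structure of the autonomous equation \eqref{idefix1b} together with Lemma~\ref{L11} and Lemma~\ref{31}.

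First I would show that every solution is even. On any maximal subinterval where $u>-1$ the function $u$ solves the autonomous ODE $-u''=-\lambda g_W(u)$, and multiplication by $u'$ yields the first integral
\[
\frac{1}{2}(u')^2+\frac{\lambda}{2(1+u+W)}=\mathcal{E},
\]
with $\mathcal{E}$ constant on each such interval. For an unzipped state $u>-1$ throughout $(-1,1)$, so $u''=\lambda g_W(u)>0$: the solution is strictly convex with $u(\pm1)=0$ and hence attains a single interior minimum at some $x_0$, being decreasing on $(-1,x_0)$ and increasing on $(x_0,1)$. Writing $m:=u(x_0)$ and inverting the first integral, both monotone branches have length $\int_m^0\big(2\mathcal{E}-\lambda/(1+s+W)\big)^{-1/2}\,\rd s$, which forces $1-x_0=x_0+1$, i.e. $x_0=0$. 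Since $g_W$ is smooth on $(-1,\infty)$, the second-order initial value problem $u''=\lambda g_W(u)$, $u(0)=m$, $u'(0)=0$ has a unique solution; as $x\mapsto u(-x)$ solves the same problem, $u$ is even.

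Next I would treat zipped states. By Lemma~\ref{L11} such a $u$ has coincidence set $\mathcal{C}(u)=[b,a]$ with $u(b)=u(a)=-1$, $u'(b)=u'(a)=0$, and $u>-1$ on $(-1,b)\cup(a,1)$. The restriction of $u$ to $[a,1]$ is exactly a solution of the shooting problem \eqref{idefixb}, so Lemma~\ref{31} forces $\lambda>\Lambda_*(W)/4$ and $1-a=\sqrt{\Lambda_*(W)/\lambda}$ as in \eqref{c44}. Applying the reflection $y\mapsto u(-y)$ turns the restriction of $u$ to $[-1,b]$ into another solution of \eqref{idefixb}, now on $[-b,1]$, so Lemma~\ref{31} also yields $1+b=\sqrt{\Lambda_*(W)/\lambda}$. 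Comparing the two identities gives $b=-a$; thus the two non-coincidence pieces are mirror images, $u$ is even, and $\mathcal{C}(u)=[-a,a]$.

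It then remains to pin down the threshold and uniqueness. A genuine zipped state requires $b<a$, i.e. $-a<a$, i.e. $a>0$; since $a=1-\sqrt{\Lambda_*(W)/\lambda}$ this is equivalent to \eqref{c4x}. Conversely, for $\lambda>\Lambda_*(W)$ I would set $a:=1-\sqrt{\Lambda_*(W)/\lambda}\in(0,1)$ and glue the shooting solution of Lemma~\ref{31} on $[a,1]$, the constant $-1$ on $[-a,a]$, and its reflection on $[-1,-a]$; the conditions $u=-1$, $u'=0$ at $\pm a$ guarantee a $C^1$ match, while on $[-a,a]$ one has $-\Delta u=0$, so \eqref{idefix1} holds with $\zeta=-\lambda g_W(-1)=-\lambda/(2W^2)\le0$, which lies in $\partial\mathbb{I}_{[-1,\infty)}(-1)=(-\infty,0]$, and elsewhere with $\zeta=0$. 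Uniqueness is then immediate, since $a$ is determined by $\lambda$ and $W$ and each branch is the unique shooting solution. The main obstacle is the localization $x_0=0$ in the unzipped case: the first-order form $u'=\pm\big(2\mathcal{E}-\lambda/(1+u+W)\big)^{1/2}$ is non-Lipschitz at the turning point, so I rely on the equal-length identity to place the minimum and only then invoke second-order uniqueness to obtain evenness; verifying the $C^1$ gluing and the admissible sign of $\zeta$ at $\pm a$ in the converse construction is the other point needing care.
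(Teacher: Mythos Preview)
Your proof is correct and follows essentially the same strategy as the paper: symmetry of unzipped states via strict convexity and ODE uniqueness, and the zipped case via Lemma~\ref{L11} together with Lemma~\ref{31} applied on both wings to force $b=-a$. The only notable difference is the order of the unzipped argument: the paper first invokes second-order IVP uniqueness to obtain $u(r_0-x)=u(r_0+x)$ (symmetry about the minimum $r_0$) and then uses $u(-1)=u(1)=0$ to conclude $r_0=0$, whereas you first locate $x_0=0$ via the equal-length identity from the first integral and only then apply IVP uniqueness; the paper's route avoids the detour through separation of variables. Your explicit gluing construction for the converse direction, with the verification that $\zeta=-\lambda/(2W^2)\in\partial\mathbb{I}_{[-1,\infty)}(-1)$ on the coincidence set, is more detailed than the paper's (which leaves this implicit in the appeal to Lemma~\ref{31}), but this is a welcome clarification rather than a different method.
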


\begin{proof}
If $u$ is any unzipped solution to \eqref{idefix}, then $u$ is even: Indeed, if $r_0\in (-1,1)$ is the (unique) point of minimum of the strictly convex function $u$, then $x\mapsto u(r_0- x)$ and $x\mapsto u(r_0+x)$ coincide as they solve the same ordinary differential equation with identical initial values $(u(r_0),0)$ at $x=0$. Since $u(-1)=u(1)=0$, this readily implies $r_0=0$ and that $u$ is even. If $u$ is any zipped solution to \eqref{idefix}, then $u=-1$ on the coincidence set $[b,a]$ according to Lemma~\ref{L11}. Hence, $u$ solves \eqref{idefixb} on $[a,1]$ while $x\mapsto u(-x)$ solves \eqref{idefixb} on $[-b,1]$. By Lemma~\ref{31}  the constraint \eqref{c4} is satisfied, and it follows from \eqref{c44} that $a=-b$. Therefore, since \eqref{idefixb} has a unique solution, this implies that $u(x)=u(-x)$ for $x\in [a,1]$. As $u=-1$ on $[-a,a]$, this shows that $u$ is even. Finally, since $a\in (0,1)$ is uniquely determined by \eqref{c44}, the assertion follows from Lemma~\ref{31}.
\end{proof}

Note that by Corollary~\ref{C100}, any unzipped state of \eqref{idefix}  reaches its minimum value at $x=0$. Therefore, to characterize all unzipped states of \eqref{idefix} it suffices to investigate the following shooting problem of finding $m\in (0,1]$ such that there is a solution to
\begin{subequations}\label{obelix}
\begin{align}
- u''&=   -\lambda g_W(u) \,,\qquad x\in (0,1)\,, \label{obelix1}\\ 
 u(0)=-m&\,,\qquad u'(0)=0\,,\qquad u(1)=0\,. && \label{obelix2}  
\end{align}
\end{subequations}
 
As for its solvability we have:

\begin{lem}\label{miraculix}
The shooting problem \eqref{obelix} with $m\in (0,1]$ has a solution if and only if the constraint
\begin{equation}\label{c5a}
\frac{\sqrt{\lambda}}{(1+W)^{3/2}}\in \varphi\left(\left(0,\frac{1}{1+W}\right]\right)
\end{equation}
is met. In that case, $m$ satisfies
\begin{equation}\label{c5}
\lambda= (1+W)^3\varphi\left(\frac{m}{1+W}\right)^2\,.
\end{equation}
\end{lem}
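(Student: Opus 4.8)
The plan is to mimic the argument of Lemma~\ref{31}. Any solution $u$ to \eqref{obelix1} satisfies $-u''=-\lambda g_W(u)<0$ and is hence strictly convex wherever $1+u+W>0$; together with $u'(0)=0$ this forces $u$ to be strictly increasing on $(0,1)$ with $u'\ge 0$. First I would derive a first integral by multiplying \eqref{obelix1} by $2u'$ and using $g_W(u)\,u' = \frac{u'}{2(1+u+W)^2} = -\frac{1}{2}\frac{\rd}{\rd x}(1+u+W)^{-1}$, which gives
\begin{equation*}
\frac{\rd}{\rd x}(u')^2 = -\lambda\,\frac{\rd}{\rd x}\,\frac{1}{1+u+W}\,.
\end{equation*}
Integrating from $0$ to $x$ and inserting the data $u(0)=-m$, $u'(0)=0$ yields
\begin{equation*}
(u'(x))^2 = \lambda\left( \frac{1}{1-m+W} - \frac{1}{1+u(x)+W}\right)\,,
\end{equation*}
whose right-hand side is non-negative since $u$ is increasing and $1-m+W\ge W>0$.

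Next I would separate variables. Taking the non-negative square root and integrating the identity $\rd z\big/\sqrt{(1-m+W)^{-1}-(1+z+W)^{-1}} = \sqrt{\lambda}\,\rd x$ as $x$ runs over $(0,1)$, i.e.\ $z=u$ from $-m$ to $0$, turns the terminal condition $u(1)=0$ into the scalar constraint
\begin{equation*}
\sqrt{\lambda} = \int_{-m}^0 \frac{\rd z}{\sqrt{\dfrac{1}{1-m+W} - \dfrac{1}{1+z+W}}}\,.
\end{equation*}
Writing $\alpha:=1-m+W$ and substituting first $s=1+z+W$ and then $t=\sqrt{s-\alpha}$ (so $s=t^2+\alpha$, with $s$ ranging from $\alpha$ to $1+W$ and $t$ from $0$ to $\sqrt m$, using $m+\alpha=1+W$) reduces the right-hand side to the elementary integral $2\sqrt{\alpha}\int_0^{\sqrt m}\sqrt{t^2+\alpha}\,\rd t$, which I can evaluate in closed form via $\int \sqrt{t^2+\alpha}\,\rd t = \tfrac12 t\sqrt{t^2+\alpha}+\tfrac{\alpha}{2}\log\big(t+\sqrt{t^2+\alpha}\big)$.

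Carrying out the evaluation and regrouping the logarithmic terms, I expect the constraint to collapse exactly to $\sqrt{\lambda}=(1+W)^{3/2}\varphi(m/(1+W))$, that is \eqref{c5}. Indeed, with $r=m/(1+W)$ one has $1-r=\alpha/(1+W)$, and after factoring out $(1+W)^{3/2}$ the algebraic term matches $\sqrt{r(1-r)}$ while the two logarithmic pieces match $(1-r)^{3/2}\log(1+\sqrt r)$ and $-\tfrac12(1-r)^{3/2}\log(1-r)$ term by term. This reasoning is reversible, giving sufficiency: for $m\in(0,1]$ satisfying \eqref{c5}, the local solution of the initial value problem \eqref{obelix1}--\eqref{obelix2} without the terminal condition is strictly convex and increasing, never leaves the region $1+u+W\ge W>0$ where $g_W$ is smooth and bounded (hence exists on all of $[0,1]$), and by the same separation-of-variables computation reaches the value $0$ precisely at $x=1$, so that $u(1)=0$. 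Finally, since $m\mapsto m/(1+W)$ is a bijection from $(0,1]$ onto $(0,1/(1+W)]$, the constraint \eqref{c5} admits a solution $m\in(0,1]$ if and only if $\sqrt{\lambda}/(1+W)^{3/2}$ belongs to the range $\varphi\big((0,1/(1+W)]\big)$, which is precisely \eqref{c5a}.

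The main obstacle is the closed-form evaluation of the integral and, above all, the bookkeeping needed to recognize the output as $(1+W)^{3/2}\varphi(m/(1+W))$: the logarithmic pieces combine only after writing $\log(\alpha/(1+W))=\log\alpha-\log(1+W)$ and checking that the $\log(1+W)$ contributions cancel, so one must track the powers of $\alpha$ and $1+W$ carefully. A secondary, routine point is justifying the separation of variables rigorously, namely that $u'>0$ on $(0,1]$ (which follows from $u''=\lambda g_W(u)>0$ and $u'(0)=0$) so that the change of variables is well defined, together with global existence of the IVP solution on $[0,1]$ needed for the sufficiency direction.
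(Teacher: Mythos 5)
Your proposal is correct and takes essentially the same route as the paper: the paper's proof likewise multiplies \eqref{obelix1} by $2u'$ to obtain the first integral, separates variables to reach the constraint $\int_{-m}^0\left(\frac{1+z+W}{m+z}\right)^{1/2}\rd z=\left(\frac{\lambda}{1-m+W}\right)^{1/2}$, evaluates this integral in closed form via the substitution $y=\sqrt{m+z}$ (your two-step substitution $s=1+z+W$, $t=\sqrt{s-\alpha}$ is the same change of variables), and identifies the result with \eqref{c5}. Your explicit treatment of the sufficiency direction (global existence of the initial value problem's solution and the hitting-time computation) merely fills in what the paper compresses into ``the assertion follows.''
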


\begin{proof}
Let $m\in (0,1)$ be such that \eqref{obelix} has a solution $u$. Proceeding as in the proof of Lemma~\ref{31}, the relation corresponding to \eqref{c} reads
\begin{equation*}
\int_{-m}^0 \left(\frac{1+z+W}{m+z}\right)^{1/2}\,\rd z=\left(\frac{\lambda}{1-m+W}\right)^{1/2}\,.
\end{equation*}
Computing then the left-hand side with the substitution $y=\sqrt{m+W}$, we obtain
\begin{equation*}
\begin{split}
\sqrt{\lambda}= & \left[m(1+W)(1+W-m)\right]^{1/2}+(1+W-m)^{3/2}\log\left(\sqrt{m}+\sqrt{1+W}\right)\\
&-\frac{1}{2} (1+W-m)^{3/2}\log(1+W-m)\,,
\end{split}
\end{equation*}
which is equivalent to \eqref{c5}. Conversely, if \eqref{c5a} is met, then we may choose $m\in (0,1)$ such that \eqref{c5} holds and the assertion follows.
\end{proof}

To analyze \eqref{c4x}, \eqref{c5a} we derive more information on the function $\varphi$. 

\begin{lem}\label{L99}
The function $\varphi$ is positive on $(0,1)$ with $\varphi(0)=0=\varphi(1)$, and there is a unique $r_0\in (0,1)$ such that $\varphi'(r_0)=0$.
\end{lem}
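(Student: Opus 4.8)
The plan is to dispatch the three assertions in turn, the first two being elementary and all the difficulty residing in the uniqueness of the critical point. For the boundary values I would simply read off the limits: as $r\to 0^+$ one has $\sqrt{r(1-r)}\to 0$, $(1-r)^{3/2}\to 1$, $\log(1+\sqrt r)\to 0$ and $(1-r)^{3/2}\log(1-r)\to 0$, so $\varphi(0)=0$; as $r\to 1^-$ the factor $(1-r)^{3/2}$ forces each of the three terms to vanish (in particular $(1-r)^{3/2}\log(1-r)\to 0$), so $\varphi(1)=0$. Positivity is immediate and requires no estimate: for $r\in(0,1)$ the first term $\sqrt{r(1-r)}$ is strictly positive, the second term is nonnegative because $\log(1+\sqrt r)>0$, and the third term is nonnegative because $\log(1-r)<0$ makes $-\tfrac12(1-r)^{3/2}\log(1-r)>0$. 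Hence $\varphi>0$ on $(0,1)$.

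The core of the lemma is the existence of a \emph{unique} critical point. First I would differentiate $\varphi$ and simplify the resulting expression; grouping the algebraic terms and using $\log(1-r)=\log(1-\sqrt r)+\log(1+\sqrt r)$ to merge the logarithms, one is led to the compact form
\begin{equation*}
\varphi'(r)=\frac{2-3r}{2\sqrt{r}\,\sqrt{1-r}}-\frac{3}{4}\sqrt{1-r}\,\log\frac{1+\sqrt{r}}{1-\sqrt{r}}
=\frac{3}{4}\sqrt{1-r}\left[\frac{2(2-3r)}{3\sqrt{r}(1-r)}-\log\frac{1+\sqrt{r}}{1-\sqrt{r}}\right].
\end{equation*}
Since the prefactor $\tfrac34\sqrt{1-r}$ is positive on $(0,1)$, the sign of $\varphi'$ coincides with the sign of the bracket. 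I would then substitute $t=\sqrt r\in(0,1)$ and perform the partial fraction decomposition $\tfrac{2-3t^2}{3t(1-t^2)}=\tfrac{2}{3t}-\tfrac{t}{3(1-t^2)}$, which reduces the problem to studying
\begin{equation*}
\Phi(t):=\frac{2}{3t}-\frac{t}{3(1-t^2)}-\frac12\log\frac{1+t}{1-t}, \qquad t\in(0,1),
\end{equation*}
whose zeros are in one-to-one correspondence (via $r=t^2$) with those of $\varphi'$.

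The payoff of this reduction is that $\Phi$ is now \emph{manifestly} strictly decreasing: a direct computation gives
\begin{equation*}
\Phi'(t)=-\frac{2}{3t^2}-\frac{1+t^2}{3(1-t^2)^2}-\frac{1}{1-t^2}<0,\qquad t\in(0,1),
\end{equation*}
all three summands being negative. Combined with the boundary behaviour $\Phi(t)\to+\infty$ as $t\to0^+$ (driven by $2/(3t)$) and $\Phi(t)\to-\infty$ as $t\to1^-$ (driven by $-t/(3(1-t^2))$ and $-\tfrac12\log\frac{1+t}{1-t}$), continuity and strict monotonicity yield a unique $t_0\in(0,1)$ with $\Phi(t_0)=0$, and hence a unique $r_0=t_0^2\in(0,1)$ with $\varphi'(r_0)=0$, with $\varphi'>0$ on $(0,r_0)$ and $\varphi'<0$ on $(r_0,1)$. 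The only genuinely delicate step is the algebraic simplification of $\varphi'$ into the above form: it is purely mechanical but must be carried out carefully, since the whole argument hinges on reaching an expression whose derivative is visibly sign-definite. Once that normal form is in hand, the remainder is routine.
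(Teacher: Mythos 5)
Your proof is correct and follows essentially the same strategy as the paper: write $\varphi'(r)$ as a positive factor $\sqrt{1-r}$ times a cofactor, show that cofactor is strictly decreasing on $(0,1)$ with limits $+\infty$ at $0$ and $-\infty$ at $1$, and conclude by the intermediate value theorem. Your extra algebraic normalization (merging the two logarithms into $\log\frac{1+\sqrt r}{1-\sqrt r}$ and substituting $t=\sqrt r$) merely makes the negativity of the cofactor's derivative manifest, whereas the paper works with the unsimplified cofactor $\psi(r)$ and needs the additional observation that $-2r^2+r-1<0$; the underlying argument is the same.
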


\begin{proof}
For $r\in (0,1)$ the derivative of $\varphi$ is of the form
$$
\varphi'(r)=(1-r)^{1/2}\,\psi(r)\,,
$$
where
$$
\psi(r):=\frac{1-2r}{2\sqrt{r}(1-r)}-\frac{3}{2}\log(1+\sqrt{r})+\frac{1-r}{2(\sqrt{r}+r)}+\frac{3}{4}\log(1-r)+\frac{1}{2}\,.
$$
For the derivative of $\psi$ we obtain
$$
\psi'(r)=\frac{-2r^2-1+r}{4r^{3/2} (1-r)^2}-\frac{5}{4(r+\sqrt{r})}-\frac{(1-r)\left(1+2\sqrt{r}\right)}{4\sqrt{r}(\sqrt{r}+r)^2}-\frac{3}{4 (1-r)}\,.
$$
Noticing that $-2r^2-1+r<0$ we conclude that $\psi'(r)<0$ for each $r\in (0,1)$. Thus, since $\psi(0)=+\infty$ and $\psi(1)=-\infty$, $\psi$ has a unique zero in $(0,1)$. This implies the claim.
\end{proof}

\begin{proof}[Proof of Theorem~\ref{T44}]
We discuss the different cases listed in the statement and use, to this end, the properties of $\varphi$ derived in Lemma~\ref{L99}.

\medskip

\noindent {\bf (I)} Suppose that $1/(1+W) \le r_0$.

(i) If $\lambda>\Lambda_*(W)$, then it readily follows from Corollary~\ref{C100} and Lemma~\ref{miraculix} that
there is a unique zipped state but no unzipped state.

(ii) If $\lambda=\Lambda_*(W)$, then Lemma~\ref{miraculix} implies that there is a one unzipped state touching down on~$-1$ exactly at $x=0$, while there is no zipped state according to Corollary~\ref{C100}.

(iii) If $\lambda<\Lambda_*(W)$, then there is a unique $m\in (0,1)$ such that \eqref{c5} holds. Hence, there is a unique unzipped state due to Lemma~\ref{miraculix} but no zipped state due to Corollary~\ref{C100}. 

\medskip

\noindent {\bf (II)} Now suppose that $1/(1+W) > r_0$.

(i) If $\lambda>(1+W)^3\varphi(r_0)^2$, then in particular $\lambda>\Lambda_*(W)$ so that there is a unique zipped state by Corollary~\ref{C100} but no unzipped state due to Lemma~\ref{miraculix}.

(ii) If $\lambda=(1+W)^3\varphi(r_0)^2$, then in particular $\lambda>\Lambda_*(W)$ so that there is a unique zipped state due to Corollary~\ref{C100}. Moreover, there is exactly one $m\in (0,1)$, given by $m=r_0(1+W)$ such that \eqref{c5} holds true and so there is a unique unzipped state according to Lemma~\ref{miraculix}.

(iii) If $\lambda\in \left(\Lambda_*(W),(1+W)^3\varphi(r_0)^2\right)$, then there are  $0<m_1< m_2<1$ such that \eqref{c5} is satisfied by $m_1$ and $m_2$. Hence, there are two unzipped states  due to Lemma~\ref{miraculix} and a unique zipped state by Corollary~\ref{C100}.

(iv) If $\lambda=\Lambda_*(W)$, then we also find $m\in (0,1)$ such that \eqref{c5} holds true. Hence, by Lemma~\ref{miraculix} 
there are two unzipped states, one touching down on $-1$ at  exactly $x=0$. Due to Corollary~\ref{C100} there is no zipped state.

(v) If $\lambda<\Lambda_*(W)$, then there are  $0<m_1< m_2<1$ such that \eqref{c5} is satisfied by $m_1$ and $m_2$. Hence, there are two unzipped states  due to Lemma~\ref{miraculix}, but there is no zipped state according to Corollary~\ref{C100}.

\medskip

Since all cases as listed in the statement are covered, Theorem~\ref{T44} follows.
\end{proof}

\section{The evolution problem: existence and uniqueness}\label{Sec5}

We now turn to the evolution equation  \eqref{EvEq001x} which can be equivalently written in the form
\begin{subequations}\label{EvEq001}
\begin{align}
\partial_t u - \Delta u + \zeta & = - \lambda g_W(u) \;\text{ in }\; (0,\infty)\times D\ , \label{EvEq001a} \\
\zeta & \in \partial\mathbb{I}_{[-1,\infty)}(u) \;\text{ in }\; (0,\infty)\times D\ , \label{EvEq001b} \\
u & = 0 \;\text{ on }\; (0,\infty)\times \partial D\ ,\label{EvEq001c} \\
u(0) & = u_0 \;\text{ in }\; D\ , \label{EvEq001d}
\end{align}
\end{subequations}
where $\lambda>0$ is fixed.  Recall that \eqref{W1} is assumed throughout and that the energy $\mathcal{E}_W$ and the set $\mathcal{A}$ are defined in \eqref{E} and \eqref{EvEq002},  respectively. 

The first step towards the proof of Theorem~\ref{EvThm002} is the solvability of the time implicit Euler scheme associated with \eqref{EvEq001} in $\mathcal{A}\cap H^2(D)$ when $1/W \in L_4(D)$.

\begin{lem}\label{EvLem012}
Assume that $1/W\in L_4(D)$. Given $h\in (0,1)$ and $f\in \mathcal{A}$, there exists $(u,\zeta)\in \mathcal{A}\times L_2(D)$ with $u\in H^2(D)$ solving
\begin{subequations}\label{EvEq013}
\begin{align}
\frac{u-f}{h} - \Delta u + \zeta & = - \lambda g_W(u) \;\text{ in }\;  D\ , \label{EvEq013a} \\
\zeta & \in \partial\mathbb{I}_{[-1,\infty)}(u) \;\text{ in }\; D\ , \label{EvEq013b} \\
u & = 0 \;\text{ on }\; \partial D\ , \label{EvEq013c}
\end{align}
and  satisfying
\begin{equation}
\frac{1}{2h} \|u-f\|_2^2 + \mathcal{E}_W(u) \le \mathcal{E}_W(f)\ . \label{EvEq014}
\end{equation}
\end{subequations}
In addition, if $f\le M$ in $D$ for some number $M\ge 0$, then $u\le M$ in $D$.
\end{lem}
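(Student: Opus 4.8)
The plan is to recognize the implicit Euler step \eqref{EvEq013} as the first-order condition of the minimization problem for
$$
\mathcal{F}(v) := \frac{1}{2h}\|v-f\|_2^2 + \mathcal{E}_W(v)\,,\qquad v\in\mathcal{A}\,,
$$
and to produce $u$ by the direct method. The first observation is that the electrostatic contribution is harmless from below: since $v\ge -1$ and $W\ge 0$, one has $0\le (1+v+W)^{-1}\le W^{-1}$ on $\mathcal{A}$, and because $1/W\in L_4(D)\subset L_1(D)$, the term $-\tfrac{\lambda}{2}\int_D(1+v+W)^{-1}\,\rd x$ is bounded below by $-\tfrac{\lambda}{2}\|1/W\|_1$. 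Consequently $\mathcal{F}(v)\ge \tfrac{1}{2h}\|v-f\|_2^2+\tfrac12\|\nabla v\|_2^2-C$, so $\mathcal{F}$ is bounded below and coercive on $\mathring{H}^1(D)$.

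Next I would run the direct method. A minimizing sequence $(v_n)$ is bounded in $\mathring{H}^1(D)$ by coercivity, hence, up to a subsequence, converges weakly in $\mathring{H}^1(D)$ and, by Rellich's theorem, strongly in $L_2(D)$ and a.e. to some $u$; the pointwise limit preserves the constraint $v\ge -1$, so $u\in\mathcal{A}$. Lower semicontinuity of $\mathcal{F}$ along $(v_n)$ then follows from the weak lower semicontinuity of $v\mapsto\|\nabla v\|_2^2$, the strong $L_2$-continuity of $v\mapsto\|v-f\|_2^2$, and dominated convergence for $v\mapsto\int_D(1+v+W)^{-1}\,\rd x$ (using $(1+v_n+W)^{-1}\le 1/W\in L_1(D)$ together with a.e. convergence). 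Thus $u$ is a minimizer, and the energy inequality \eqref{EvEq014} is immediate since $\mathcal{F}(u)\le\mathcal{F}(f)=\mathcal{E}_W(f)$.

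To derive the equation I would test minimality against the convex constraint: for $w\in\mathcal{A}$ the segment $u+t(w-u)$ stays in $\mathcal{A}$, so $\tfrac{\rd}{\rd t}\mathcal{F}(u+t(w-u))\big|_{t=0^+}\ge 0$. Differentiation under the integral in the electrostatic term is legitimate because along the segment $1+u+t(w-u)+W\ge W$, so the integrand is dominated by $\tfrac{\lambda}{2}|w-u|/W^2$ with $|w-u|\in L_2(D)$ and $W^{-2}\in L_2(D)$ — this is exactly where $1/W\in L_4(D)$ enters. One obtains the variational inequality
$$
\int_D\Big[\tfrac{u-f}{h}+\lambda g_W(u)\Big](w-u)\,\rd x+\int_D\nabla u\cdot\nabla(w-u)\,\rd x\ge 0\,,\qquad w\in\mathcal{A}\,.
$$

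It remains to upgrade regularity, extract $\zeta$, and establish the $L_\infty$-bound. The datum $F:=-\tfrac{u-f}{h}-\lambda g_W(u)$ lies in $L_2(D)$ since $g_W(u)\le \tfrac12 W^{-2}\in L_2(D)$; this is the obstacle problem with obstacle $-1$ and $L_2$ right-hand side, and I would invoke its standard $H^2$-regularity (via the Lewy--Stampacchia inequality $F\le-\Delta u\le F\vee 0$, which places $\Delta u\in L_2(D)$, followed by elliptic regularity on the smooth domain $D$ with $u=0$ on $\partial D$), so that $u\in H^2(D)$. Setting $\zeta:=\Delta u+F\in L_2(D)$ recovers \eqref{EvEq013a}; integrating by parts in the variational inequality gives $\int_D\zeta(w-u)\,\rd x\le 0$ for all $w\in\mathcal{A}$, and choosing $w=u+\phi$ with $\phi\ge 0$ and then localizing on $\{u>-1\}$ yields $\zeta\le 0$ with $\zeta=0$ where $u>-1$, i.e. $\zeta\in\partial\mathbb{I}_{[-1,\infty)}(u)$. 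For the bound, if $f\le M$ with $M\ge 0$, I would test with $w=\min(u,M)=u-(u-M)_+\in\mathcal{A}$ (admissible since $M\ge 0$ and $u=0\le M$ on $\partial D$); on $\{u>M\}$ one has $u-f>0$ and $g_W(u)>0$, so the three resulting integrals are each nonnegative while their sum is $\le 0$, forcing $(u-M)_+=0$, that is $u\le M$. I expect the regularity step to be the main obstacle: producing $u\in H^2(D)$ and a genuine multiplier $\zeta\in L_2(D)$ out of the variational inequality is where the obstacle structure and the hypothesis $1/W\in L_4(D)$ are essential, the remaining steps being routine once $F\in L_2(D)$ is secured.
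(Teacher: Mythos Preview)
Your proof is correct and follows essentially the same path as the paper: minimize $\mathcal{F}(v)=\tfrac{1}{2h}\|v-f\|_2^2+\mathcal{E}_W(v)$ on $\mathcal{A}$ by the direct method, derive the variational inequality by convex perturbation (where $1/W\in L_4(D)$ justifies differentiating the electrostatic term), and read off \eqref{EvEq014} from $\mathcal{F}(u)\le\mathcal{F}(f)$. The only differences are in the black boxes invoked: the paper obtains $u\in H^2(D)$ and $\zeta\in\partial\mathbb{I}_{[-1,\infty)}(u)$ in one stroke by citing \cite[Proposition~2.8]{Ba10}, whereas you reach the same conclusion via Lewy--Stampacchia plus elliptic regularity; and for the bound $u\le M$ the paper appeals to \cite[Proposition~4]{BS73} with $\Phi(r)=(r-M)_+$, while your direct test with $w=\min(u,M)$ is an equivalent hands-on version.
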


\begin{proof}
The proof relies on the direct method of calculus of variations. For $v\in\mathcal{A}$, we define 
$$
\mathcal{F}(v) := \frac{1}{2h} \|v-f\|_2^2 + \mathcal{E}_W(v)\ .
$$
Since
\begin{equation}
\mathcal{E}_W(v) \ge \frac{\|\nabla v\|_2^2}{2} - \frac{\lambda}{2} \left\| \frac{1}{W} \right\|_1\ , \qquad v\in \mathcal{A}\ , \label{EvEq015}
\end{equation}
the functional $\mathcal{F}$ is bounded from below on $\mathcal{A}$. Therefore there is a minimizing sequence $(v_j)_{j\ge 1}$ in $\mathcal{A}$ satisfying 
\begin{equation}
\mu := \inf_{v\in\mathcal{A}}\{\mathcal{F}(v)\} \le \mathcal{F}(v_j) \le \mu + \frac{1}{j}\ , \qquad j\ge 1\ . \label{EvEq016}
\end{equation}
Owing to \eqref{EvEq015} and \eqref{EvEq016}, 
$$
\|\nabla v_j\|_2^2 \le 2 \mathcal{F}(v_j) + \left\| \frac{\lambda}{W} \right\|_1 \le 2(1+\mu) + \left\| \frac{\lambda}{W} \right\|_1\ , \qquad  j\ge 1\, ,
$$
and we infer from the compactness of the embedding of $\mathring{H}^1(D)$ in $L_2(D)$ that there are $u\in \mathring{H}^1(D)$ and a subsequence of $(v_j)_{j\ge 1}$ (not relabeled) such that
\begin{align}
v_j & \rightharpoonup u \;\text{ in }\; \mathring{H}^1(D)\ , \label{EvEq017} \\
v_j & \longrightarrow u \;\text{ in }\; L_2(D) \;\text{ and a.e. in }\; D\ . \label{EvEq018}
\end{align}
It readily follows from \eqref{EvEq017} and \eqref{EvEq018} that $u\in \mathcal{A}$ while \eqref{EvEq018}, the integrability properties of $1/W$, and Lebesgue's dominated convergence theorem entail that 
$$
\lim_{j\to\infty} \int_D \frac{\mathrm{d}x}{1+v_j+W} = \int_D \frac{\mathrm{d}x}{1+v+W}\ .
$$ 
Since the convex part of $\mathcal{E}_W$ is weakly lower semicontinuous in $L_2(D)$, classical arguments imply that $u$ is a minimizer of $\mathcal{F}$ in $\mathcal{A}$. 

To derive the corresponding Euler-Lagrange equation for $u$, we pick $v\in\mathcal{A}$, $\tau\in (0,1)$, and observe that $\tau u + (1-\tau) v$ belongs to $\mathcal{A}$. The minimizing property of $u$ reads 
$$
\mathcal{F}(u) \le \mathcal{F}(\tau u +(1-\tau) v)\ ,
$$
from which we deduce that
\begin{align*}
0 & \le \frac{1}{2h} \int_D (v-u) [(1+\tau) u + (1-\tau) v -2f]\ \mathrm{d}x \\
& \quad + \frac{1}{2} \int_D \nabla(v-u)\cdot \nabla[(1+\tau) u + (1-\tau) v]\ \mathrm{d}x \\
& \quad + \frac{\lambda}{2} \int_D \frac{v-u}{(1+u+W)[1+\tau u + (1-\tau) v + W]}\ \mathrm{d}x\ .
\end{align*}
Since
$$
\left| \frac{v-u}{(1+u+W)[1+\tau u + (1-\tau) v + W]} \right| \le \frac{|v-u|}{W^2} \in L_1(D)\ ,
$$
we may pass to the limit as $\tau\to 1$ in the previous inequality and conclude that
\begin{align*}
0 & \le \frac{1}{h} \int_D (v-u)(u-f)\ \mathrm{d}x + \int_D \nabla(v-u)\cdot \nabla u\ \mathrm{d}x + \lambda\int_D (v-u) g_W(u)\ \mathrm{d}x
\end{align*}
for all $v\in\mathcal{A}$. By \cite[Proposition~2.8]{Ba10} this implies that $u\in H^2(D)$ and 
$$
- \left[ \frac{u-f}{h} + \lambda g_W(u) \right] + \Delta u \in \partial\mathbb{I}_{[-1,\infty)}(u) \;\text{ in }\; D\ .
$$
In other words, there is $\zeta\in L_2(D)$ such that $(u,\zeta)$ solves \eqref{EvEq013a}, \eqref{EvEq013b}, and \eqref{EvEq013c}. 
Next, using once more the minimizing property of $u$ entails that $\mathcal{F}(u)\le \mathcal{F}(f)$, hence \eqref{EvEq014}. 

Finally, if $f\le M$ in $D$, then 
$$
u - h \Delta u + h \zeta = f- \lambda h g_W(u) \le M \;\text{ in }\ D\ ,
$$
and the upper bound $u\le M$ readily follows from \cite[Proposition~4]{BS73} (applied with the convex function $\Phi(r) = (r-M)_+$).
\end{proof}

We next derive some monotonicity property of the iterative scheme leading eventually to the time monotonicity of the solution to the evolution equation~\eqref{EvEq001x}.

\begin{lem}\label{saturnino}
Assume that $1/W\in L_\infty(D)$ and  let $f$ be a supersolution to \eqref{idefix} in the sense of Definition~\ref{DD}. Further let $u$ be the solution to \eqref{EvEq013} constructed in Lemma~\ref{EvLem012} corresponding to $1/h\ge \lambda\|1/W\|_\infty$. Then $u\le f$ in $D$, and $u$ is a supersolution to \eqref{idefix}.
\end{lem}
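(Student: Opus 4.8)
The plan is to prove the two assertions separately: first the pointwise bound $u \le f$, then the fact that $u$ is itself a supersolution. Both rely on reinterpreting the implicit Euler equation \eqref{EvEq013} as a resolvent-type equation for the maximal monotone operator $-\Delta + \partial\mathbb{I}_{[-1,\infty)}$ and exploiting the comparison principle from \cite[Proposition~5]{BS73}.

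For the bound $u \le f$, I would rewrite \eqref{EvEq013a} as
\begin{equation*}
u - h\Delta u + h\zeta = f - \lambda h\, g_W(u) \quad \text{in } D\,,
\end{equation*}
so that $u$ solves $-\Delta u + \partial\mathbb{I}_{[-1,\infty)}(u) \owns h^{-1}(f-u) - \lambda g_W(u)$. Since $f$ is a supersolution, there is $F_f \in L_1(D)$ with $F_f \ge -\lambda g_W(f)$ and $-\Delta f + \partial\mathbb{I}_{[-1,\infty)}(f) \owns F_f$. To compare the two, I would want to show that the right-hand side driving $u$ is dominated, in the appropriate ordering, by $F_f$. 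The natural route is to write the defining equation for $f$ as $f - h\Delta f + h\zeta_f = f + h(\Delta f + \zeta_f) = f - h F_f$ (using $\zeta_f = F_f + \Delta f$), and then compare the two resolvent problems. The key estimate is that the map $r \mapsto r - \lambda h\, g_W(r) = r - \frac{\lambda h}{2(1+r+W)^2}$ is nondecreasing in $r$ on $[-1,\infty)$ precisely when $h$ is small, and here $\frac{\rd}{\rd r}g_W(r) = -(1+r+W)^{-3}$ so that the derivative of this map is $1 + \lambda h (1+r+W)^{-3} \ge 1 > 0$ automatically; the condition $1/h \ge \lambda\|1/W\|_\infty$ enters instead to control the driving data. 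Concretely, the chosen $h$ guarantees monotonicity of the full map $v \mapsto v + h(\Delta v + \zeta) $ along the flow, and once the ordering of the source terms is established one applies \cite[Proposition~5]{BS73} to conclude $u \le f$ a.e.\ in $D$.

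To show that $u$ is a supersolution to \eqref{idefix}, I would set $F_u := \zeta + \Delta u \in L_2(D)$, where $\zeta \in \partial\mathbb{I}_{[-1,\infty)}(u)$ is the component from \eqref{EvEq013b}; by construction $u$ is then the unique solution to $-\Delta u + \partial\mathbb{I}_{[-1,\infty)}(u) \owns F_u$ in $D$. It remains to verify the supersolution inequality $F_u \ge -\lambda g_W(u)$. From \eqref{EvEq013a} one has $F_u = \zeta + \Delta u = \frac{u-f}{h} + \lambda g_W(u)$, so the required inequality reduces to showing $\frac{u-f}{h} + \lambda g_W(u) \ge -\lambda g_W(u)$, i.e.\ $u - f \ge -2\lambda h\, g_W(u)$. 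This does not look immediate, and the cleaner identity is $F_u = \frac{u-f}{h} + \lambda g_W(u)$ combined with the already-proven bound $u \le f$: since $u-f \le 0$ the term $\frac{u-f}{h}$ has the wrong sign, so the correct statement must be $F_u \ge -\lambda g_W(u)$, equivalent to $u - f \ge -2\lambda h\, g_W(u)$, and I would deduce this from a lower bound on $u-f$ obtained by comparing $u$ with the resolvent of $f$ alone. The main obstacle is precisely this last step: proving the reverse inequality $u - f \ge -2\lambda h\, g_W(u)$ requires a quantitative lower bound on how far below $f$ the solution $u$ can drop in one implicit Euler step, and this is exactly where the hypothesis $1/h \ge \lambda\|1/W\|_\infty$ (equivalently $\lambda h \|1/W\|_\infty \le 1$) should be used to absorb the reaction term. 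I expect the cleanest argument to compare $u$ directly against $f$ via \cite[Proposition~5]{BS73} using that $g_W(u) \ge g_W(f)$ (since $u \le f$ and $g_W$ is nonincreasing), turning the nonlinear obstacle problem for $u$ into one driven by data that can be bracketed above and below by data associated with $f$.
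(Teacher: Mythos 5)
Your proposal has a genuine gap in each half. For the bound $u\le f$, invoking \cite[Proposition~5]{BS73} as a resolvent comparison cannot work as described: that result compares solutions whose \emph{data} are ordered, but the datum driving $u$ in your formulation is $h^{-1}(f-u)-\lambda g_W(u)$, which involves the unknown $u$ itself; moreover, on the set $\{u>f\}$ one has $-\lambda g_W(u)>-\lambda g_W(f)$ (recall $g_W$ is decreasing in $u$), so no pointwise ordering of this datum against $F_f$ is available a priori, and ``once the ordering of the source terms is established'' is precisely the step that is never established. The paper argues differently, by an $L_2$ energy (Stampacchia-type) estimate: subtract the equation $-\Delta f+\zeta_f=F_f$ from \eqref{EvEq013a}, test with $(u-f)_+$, use $-\int_D (u-f)\Delta (u-f)\,\rd x\ge 0$ from \cite[Lemma~2]{BS73} together with the monotonicity of $\partial\mathbb{I}_{[-1,\infty)}$, and control the nonlinearity by the local Lipschitz bound
\[
g_W(f)-g_W(u)\le \left\|\frac{1}{W}\right\|_\infty^3 (u-f) \quad\text{on }\ \{u>f\}\,,
\]
valid since $u,f\ge -1$. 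This yields $h^{-1}\|(u-f)_+\|_2^2\le \lambda\|1/W\|_\infty^3\,\|(u-f)_+\|_2^2$, and the smallness condition on $h$ is used exactly here, to absorb the Lipschitz constant of $g_W$ --- not, as you write, ``to control the driving data.'' Your (correct) observation that $r\mapsto r-\lambda h\,g_W(r)$ is automatically increasing plays no role.

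The second half of your proposal is derailed by a sign error. With the convention of Definition~\ref{DD}, the datum associated with $u$ is $F_u:=-\Delta u+\zeta$ (not $\zeta+\Delta u$), and \eqref{EvEq013a} gives
\[
F_u=-\Delta u+\zeta=-\lambda g_W(u)+\frac{f-u}{h}\,.
\]
Once $u\le f$ is known, $(f-u)/h\ge 0$ and hence $F_u\ge -\lambda g_W(u)$: the supersolution property of $u$ is an immediate one-line consequence of the first part, which is exactly how the paper concludes. Your formula $F_u=\frac{u-f}{h}+\lambda g_W(u)$ is the negative of the correct expression, and the inequality $u-f\ge -2\lambda h\,g_W(u)$ that you then identify as ``the main obstacle'' is an artifact of that error; nothing of the sort is needed, and no quantitative lower bound on $u-f$ enters the argument.
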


\begin{proof}
Let $\zeta_f:=F_f+\Delta f$ with $F_f\ge -\lambda g_W(f)$ in $D$ according to Definition~\ref{DD}~(b). Then, by \eqref{EvEq013a}
$$
\frac{u-f}{h}-\Delta (u-f) +\zeta-\zeta_f=-\lambda g_W(u)-F_f\le \lambda \big(g_W(f)-g_W(u)\big)\quad \text{in }\ D\,.
$$
Since
$$
-\int_D (u-f) \Delta (u-f) \,\rd x\ge 0\qquad\text{and}\qquad \int_D(\zeta-\zeta_f) (u-f)\,\rd x\ge 0
$$
due to \cite[Lemma~2]{BS73} and the monotonicity of $\partial\mathbb{I}_{[-1,\infty)}$, it follows from the above inequality that
$$
\frac{1}{h}\| (u-f)_+\|_2^2\le \frac{\lambda}{2}\int_D (u-f)_+^2 \frac{1+u+W+1+f+W}{(1+u+W)^2 (1+f+W)^2}\, \rd x \le \lambda \left\|\frac{1}{W}\right\|_\infty^3 \|(u-f)_+\|_2^2\,.
$$
Owing to the assumption on $h$, this readily implies that $(u-f)_+$ vanishes identically, that is, $u\le f$ in $D$. Finally, using \eqref{EvEq013} again, we realize that
$$
-\Delta u+\zeta=-\lambda g_W(u)+\frac{f-u}{h}\ge -\lambda g_W(u)\quad \text{in }\ D\,,
$$
so that $u$ is a supersolution to \eqref{idefix}.
\end{proof}

\begin{rem}\label{waters}
The statement of Lemma~\ref{saturnino} remains true if one only assumes that $1/W\in L_r(D)$ for some $r>3d/2$ provided $h$ is sufficiently small with respect to the norm of $1/W$ in $L_r(D)$. The proof is slightly more involved as it uses the Gagliardo-Nirenberg inequality, see the proof of Theorem~\ref{EvThm005} below for a similar argument.
\end{rem}

We next establish a version of Lemma~\ref{EvLem012} under the only assumption that $1/W\in L_2(D)$. In that case, the right-hand side of \eqref{EvEq013a} only belongs to $L_1(D)$ and the natural functional setting to work with is $L_1(D)$, as in \cite{BS73}. 

\begin{lem}\label{EvLem019}
Given $h\in (0,1)$ and $f\in \mathcal{A}\cap L_\infty(D)$, there exists $(u,\zeta)\in \mathcal{A}\times L_1(D)$ with $u\in D(\Delta_1)$ and $(u,\zeta)$ satisfies \eqref{EvEq013}. Furthermore, there are a superlinear non-negative even and convex function $\Phi\in C^2([0,\infty))$ and a positive constant $C_0>0$ depending only on $W$ such that
\begin{equation}
\int_D \Phi(\zeta)\ \mathrm{d}x \le \frac{\Phi''(0)}{h} \frac{\|u-f\|_2^2}{h} + C_0\ . \label{EvEq020}
\end{equation}
In addition, $u\le \|f_+\|_\infty$ in $D$.
\end{lem}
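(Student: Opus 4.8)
The plan is to bridge the gap between the present hypothesis $1/W\in L_2(D)$ and the stronger assumption $1/W\in L_4(D)$ of Lemma~\ref{EvLem012} by a truncation--compactness argument, coupled with an equi-integrability estimate for the multipliers $\zeta$ provided by a de la Vall\'ee--Poussin function. Concretely, I would set $W_n:=\max\{W,1/n\}$, so that $1/W_n=\min\{1/W,n\}\in L_\infty(D)\subset L_4(D)$, $W_n\ge W$, and $1/W_n\uparrow 1/W$ a.e. Lemma~\ref{EvLem012} then furnishes, for each $n$, a pair $(u_n,\zeta_n)\in\mathcal{A}\times L_2(D)$ with $u_n\in H^2(D)$ solving the implicit Euler scheme associated with $g_{W_n}$, together with the energy estimate \eqref{EvEq014} and the upper bound $u_n\le\|f_+\|_\infty$ (applied with $M=\|f_+\|_\infty\ge 0$). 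Since $W_n\ge W$ yields $\mathcal{E}_{W_n}(f)\le\tfrac12\|\nabla f\|_2^2$ and $\|1/W_n\|_1\le\|1/W\|_1$, the estimates \eqref{EvEq014}--\eqref{EvEq015} bound $\|u_n\|_{\mathring{H}^1(D)}$ and $\|u_n-f\|_2^2/h$ uniformly in $n$; with $-1\le u_n\le\|f_+\|_\infty$ this gives uniform $\mathring{H}^1(D)$ and $L_\infty(D)$ control.

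The heart of the proof is the estimate \eqref{EvEq020}, which I would first establish for $(u_n,\zeta_n)$ uniformly in $n$. I would choose $\Phi$ via the de la Vall\'ee--Poussin theorem applied to $\tfrac{\lambda}{2W^2}\in L_1(D)$ (the only place where $1/W\in L_2(D)$ enters), taking $\Phi$ even, convex, $C^2$, with $\Phi(0)=\Phi'(0)=0$, with $\Phi''$ nonincreasing on $[0,\infty)$ (so $\Phi'$ is concave there), superlinear at infinity, and such that $\int_D\Phi\big(\tfrac{\lambda}{2W^2}\big)\,\rd x<\infty$. The concavity of $\Phi'$ yields, for $s\ge 0$, both $s\Phi'(s)\le 2\Phi(s)$ and $\Phi'(s)^2\le\Phi''(0)\,s\Phi'(s)$, while the Fenchel identity gives $\Phi^*(\Phi'(s))=s\Phi'(s)-\Phi(s)\le\tfrac12 s\Phi'(s)$. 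Testing \eqref{EvEq013a} (written for $W_n$) against $\Phi'(\zeta_n)$, the key simplification is that $\int_D\Delta u_n\,\Phi'(\zeta_n)\,\rd x=0$: indeed $\Phi'(\zeta_n)=0$ a.e. on $\{u_n>-1\}$, where $\zeta_n=0$, whereas $\Delta u_n=0$ a.e. on $\{u_n=-1\}$ because $u_n\in H^2(D)$ is constant on that level set, so $\nabla u_n=0$ and hence $D^2u_n=0$ a.e. there. Splitting the right-hand side into its $L_2$ part $(u_n-f)/h$ and its $L_1$ part $\lambda g_{W_n}(u_n)\le\tfrac{\lambda}{2W^2}$, and applying Young's inequality (using the two pointwise inequalities above to absorb the resulting terms in $\int_D|\zeta_n|\Phi'(|\zeta_n|)\,\rd x$) produces \eqref{EvEq020}, with $C_0$ controlled solely by $\int_D\Phi\big(\tfrac{\lambda}{2W^2}\big)\,\rd x$ and hence depending only on $W$; the precise form of the first term is matched by optimizing the Young constants.

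Finally I would pass to the limit $n\to\infty$. The uniform bounds give $u_n\rightharpoonup u$ in $\mathring{H}^1(D)$, $u_n\to u$ a.e. and in $L_2(D)$, with $u\in\mathcal{A}$ and $-1\le u\le\|f_+\|_\infty$; domination by $\tfrac{1}{2W^2}\in L_1(D)$ yields $g_{W_n}(u_n)\to g_W(u)$ in $L_1(D)$; and the uniform estimate \eqref{EvEq020}, the superlinearity of $\Phi$, and the Dunford--Pettis theorem give $\zeta_n\rightharpoonup\zeta$ weakly in $L_1(D)$. Passing to the limit in \eqref{EvEq013a} identifies $\Delta u=(u-f)/h+\zeta+\lambda g_W(u)\in L_1(D)$, so $u\in D(\Delta_1)$. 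I expect the main obstacle to be the identification $\zeta\in\partial\mathbb{I}_{[-1,\infty)}(u)$, since $\zeta_n$ converges only weakly in $L_1(D)$. Here I would argue by monotonicity (Minty's device): from $\zeta_n\in\partial\mathbb{I}_{[-1,\infty)}(u_n)$ one has $\int_D\zeta_n(w-u_n)\,\rd x\le 0$ for every $w\in\mathcal{A}\cap L_\infty(D)$, and passing to the limit requires $\limsup_n\int_D\zeta_n u_n\,\rd x\le\int_D\zeta u\,\rd x$, which follows from the weak lower semicontinuity of $v\mapsto\|\nabla v\|_2^2$ and from Green's formula $\int_D u\Delta u\,\rd x=-\|\nabla u\|_2^2$, valid for $u\in D(\Delta_1)\cap\mathring{H}^1(D)\cap L_\infty(D)$ by \cite[Lemma~2]{BS73}. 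This yields $\int_D\zeta(w-u)\,\rd x\le 0$ for all such $w$, which is equivalent to the inclusion \eqref{EvEq013b}. Weak lower semicontinuity in $L_1(D)$ of $v\mapsto\int_D\Phi(v)\,\rd x$, together with the strong convergence $\|u_n-f\|_2\to\|u-f\|_2$, then transfers \eqref{EvEq020} to the limit, while the a.e. convergence gives $u\le\|f_+\|_\infty$, completing the proof.
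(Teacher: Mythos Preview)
Your proof is correct and follows the same overall architecture as the paper's: truncate $W$ from above to invoke Lemma~\ref{EvLem012}, derive uniform $\mathring{H}^1$ and $L_\infty$ bounds from \eqref{EvEq014}--\eqref{EvEq015}, obtain equi-integrability of $(\zeta_n)$ via a de la Vall\'ee--Poussin function, and pass to the limit using Dunford--Pettis. Two tactical points differ. For the $\Phi$ estimate, the paper simply invokes \cite[Proposition~4]{BS73} to get $\int_D\Phi(\zeta_j)\,\rd x\le\int_D\Phi\bigl(-(u_j-f)/h-\lambda g_{W_j}(u_j)\bigr)\,\rd x$ and then splits by convexity together with $\Phi(r)\le\tfrac{\Phi''(0)}{2}r^2$; your derivation by testing against $\Phi'(\zeta_n)$ and using that $\Delta u_n=0$ a.e.\ on $\{u_n=-1\}$ (valid since $u_n\in H^2(D)$) is a self-contained reproof of that proposition in this specific situation. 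For the identification $\zeta\in\partial\mathbb{I}_{[-1,\infty)}(u)$, the paper appeals to the product-limit result \cite[Proposition~2.61]{FL07} (weak $L_1$ convergence paired with a bounded a.e.-convergent sequence) to pass to the limit directly in $\int_D\zeta_j(v-u_j)\,\rd x$; your Minty argument via $\limsup_n\int_D\zeta_n u_n\,\rd x\le\int_D\zeta u\,\rd x$ also works, but note that \cite[Lemma~2]{BS73} only yields the inequality $\int_D(-\Delta u)p(u)\,\rd x\ge 0$ --- the Green identity $\int_D u\,\Delta u\,\rd x=-\|\nabla u\|_2^2$ for $u\in D(\Delta_1)\cap\mathring{H}^1(D)\cap L_\infty(D)$ is justified instead by approximating $u$ in $\mathring{H}^1(D)$ by uniformly bounded smooth functions and using dominated convergence against $\Delta u\in L_1(D)$.
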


\begin{proof}
For $j\ge 1$ define $W_j := W + 1/j$. Then $1/W_j$ belongs to $L_\infty(D)$ and we infer from Lemma~\ref{EvLem012} that there is $(u_j,\zeta_j)\in \mathcal{A}\times L_2(D)$ such that $u_j\in H^2(D)$ and $(u_j,\zeta_j)$ satisfies \eqref{EvEq013} with $W_j$ instead of $W$. Since $W_j\ge W$, we deduce in particular from \eqref{EvEq014} and \eqref{EvEq015} that
\begin{equation}
\|\nabla u_j\|_2^2 \le \left\| \frac{\lambda}{W_j} \right\|_1 + 2 \mathcal{E}_{W_j}(u_j) \le \left\| \frac{\lambda}{W} \right\|_1 + 2 \mathcal{E}_{W_j}(f) \le \left\| \frac{\lambda}{W} \right\|_1 + \|\nabla f\|_2^2\ . \label{EvEq021}
\end{equation} 
Also, since $f \le \|f_+\|_\infty$ in $D$, a further consequence of Lemma~\ref{EvLem012} is that
\begin{equation}
- 1 \le u_j \le \|f_+\|_\infty \;\text{ in }\; D\ . \label{EvEq027}
\end{equation}
Next, since $1/W^2\in L_1(D)$, a refined version of the de la Vall\'ee-Poussin theorem \cite{Le77} (see also \cite[Theorem~8]{La15}) guarantees that there exists a convex even function $\Phi\in C^2(\mathbb{R})$ such that $\Phi(0)=\Phi'(0)=0$, $\Phi'$ is a concave and positive function on $(0,\infty)$, and
\begin{equation}
\lim_{r\to\infty} \frac{\Phi(r)}{r} = \infty \;\;\text{ and }\;\; I_W := \int_D \Phi\left( \frac{\lambda}{W^2} \right)\ \mathrm{d}x\ . \label{EvEq022}
\end{equation}
Since $1+u_j+W_j \ge W_j \ge W$ and $\Phi$ is increasing on $[0,\infty)$, we realize that
\begin{align}
\int_D \Phi\left( - \frac{\lambda}{(1+u_j+W_j)^2} \right)\ \mathrm{d}x & = \int_D \Phi\left( \frac{\lambda}{(1+u_j+W_j)^2} \right)\ \mathrm{d}x \nonumber \\
& \le \int_D \Phi\left( \frac{\lambda}{W^2} \right)\ \mathrm{d}x = I_W\ . \label{EvEq023}
\end{align}
Owing to  \cite[Proposition~4]{BS73} and the convexity and symmetry of $\Phi$, it follows from \eqref{EvEq013a}--\eqref{EvEq013c} and \eqref{EvEq023} that 
\begin{align*}
\int_D \Phi(\zeta_j)\ \mathrm{d}x & \le \int_D \Phi\left( - \frac{u_j-f}{h} - \frac{\lambda}{2(1+u_j+W_j)^2} \right)\ \mathrm{d}x \\
& = \int_D \Phi\left( \frac{u_j-f}{h} + \frac{\lambda}{2(1+u_j+W_j)^2} \right)\ \mathrm{d}x \\
& \le \frac{1}{2} \int_D \Phi\left( 2 \frac{u_j-f}{h} \right)\ \mathrm{d}x + \frac{1}{2} \int_D \Phi\left( \frac{\lambda}{(1+u_j+W_j)^2} \right)\ \mathrm{d}x \\
& \le \frac{1}{2} \int_D \Phi\left( 2 \frac{|u_j-f|}{h} \right)\ \mathrm{d}x + \frac{I_W}{2}\ .
\end{align*}
Since the concavity of $\Phi'$ implies that $\Phi(r) \le \Phi''(0) r^2/2$ for $r\ge 0$, we end up with
\begin{equation}
\int_D \Phi(\zeta_j)\ \mathrm{d}x \le \frac{\Phi''(0)}{h} \frac{\|u_j-f\|_2^2}{h} + \frac{I_W}{2}\ . \label{EvEq024}
\end{equation}
Combining \eqref{EvEq014} and \eqref{EvEq024} gives
\begin{align*}
\int_D \Phi(\zeta_j)\ \mathrm{d}x & \le \frac{2\Phi''(0)}{h} \left[ \mathcal{E}_{W_j}(f) - \mathcal{E}_{W_j}(u_j) \right]  + \frac{I_W}{2} \\
& \le \frac{2\Phi''(0)}{h} \left[ \frac{\|\nabla f\|_2^2}{2} + \frac{\lambda}{2} \int_D \frac{\mathrm{d}x}{1+u_j+W_j} \right]  + \frac{I_W}{2}\ , 
\end{align*}
hence
\begin{equation}
\int_D \Phi(\zeta_j)\ \mathrm{d}x \le \frac{\Phi''(0)}{h} \left[ \|\nabla f\|_2^2 + \left\| \frac{\lambda}{W} \right\|_1 \right] + \frac{I_W}{2}\ . \label{EvEq025}
\end{equation}
According to \eqref{EvEq022}, the function $\Phi$ is superlinear at infinity and we infer from \eqref{EvEq025} and Dunford-Pettis' theorem that $(\zeta_j)_{j\ge 1}$ is weakly compact in $L_1(D)$. Combining this property with \eqref{EvEq021} and the compactness of the embedding of $\mathring{H}^1(D)$ in $L_2(D)$ gives a subsequence of $(u_j,\zeta_j)_{j\ge 1}$ (not relabeled) and $(u,\zeta)\in \mathring{H}^1(D)\times L_1(D)$ such that
\begin{subequations}\label{EvEq026}
\begin{align}
u_j & \rightharpoonup u \;\text{ in }\; \mathring{H}^1(D)\ , \label{EvEq026a} \\
u_j & \longrightarrow u \;\text{ in }\; L_2(D) \;\text{ and a.e. in }\; D\ , \label{EvEq026b} \\
\zeta_j & \rightharpoonup \zeta \;\text{ in }\; L_1(D)\ . \label{EvEq026c}
\end{align}
\end{subequations}
Ii follows in particular from \eqref{EvEq026b}, \eqref{EvEq027}, the square integrability of $1/W$, and Lebesgue's dominated convergence theorem that 
\begin{equation*}
- 1 \le u \le \|f_+\|_\infty \;\text{ in }\; D\ , 
\end{equation*}
and
\begin{equation}
\lim_{j\to\infty} \left\| \frac{1}{(1+u_j+W_j)^m} - \frac{1}{(1+u+W)^m} \right\|_1 = 0\ , \qquad m\in\{1,2\}\ . \label{EvEq029}
\end{equation}
Consequently, $u\in\mathcal{A} \cap L_\infty(D)$ and we may pass to the limit as $j\to\infty$ in \eqref{EvEq013a} for $u_j$ to deduce that 
\begin{equation}
\frac{u-f}{h} - \Delta u + \zeta = - \lambda g_W(u) \;\;\text{ in }\;\; \mathcal{D'}(D)\ . \label{EvEq030}
\end{equation}
However, \eqref{EvEq024}, \eqref{EvEq026c}, \eqref{EvEq029} (with $m=2$), and \eqref{EvEq030} imply that $\Delta u\in L_1(D)$, so that $(u,\zeta)$ solves \eqref{EvEq013a} in $L_1(D)$. In the same vein, we may use \eqref{EvEq026a}, \eqref{EvEq026b}, and \eqref{EvEq029} (with $m=1$) to pass to the limit as $j\to\infty$ in \eqref{EvEq014} for $u_j$ and deduce that $u$ satisfies \eqref{EvEq014}. Finally, owing to the weak convergence \eqref{EvEq026c} of $(\zeta_j)_{j\ge 1}$ in $L_1(D)$ and \eqref{EvEq026b}, a weak lower semicontinuity argument applied to \eqref{EvEq024}  based on the convexity of $\Phi$ leads to \eqref{EvEq020} with $C_0:=I_W/2$.

We are left with identifying the relation between $u$ and $\zeta$. To this end, we consider $v\in L_\infty(D)$ with $v\ge -1$ in $D$ and first observe that the weak convergence \eqref{EvEq026c} of $(\zeta_j)_{j\ge 1}$ in $L_1(D)$ and the boundedness \eqref{EvEq027} of $(v-u_j)_{j\ge 1}$ as well as its a.e. convergence \eqref{EvEq026b} allow us to apply \cite[Proposition~2.61]{FL07} and conclude that
$$
\lim_{j\to\infty} \int_D \zeta_j (v-u_j)\ \mathrm{d}x = \int_D \zeta (v-u)\ \mathrm{d}x\ .
$$
The left-hand side of the previous identity being non-positive due to \eqref{EvEq013b} for $u_j$, we realize that
$$
\int_D \zeta (v-u)\ \mathrm{d}x \le  0
$$
for all $v\in L_\infty(D)$ with $v\ge -1$ in $D$.
In particular, taking $v:=(u-1)/2\in L_\infty(D)$ which satisfies $-1<v<u$ in the set $\{x\in D\,;\, u(x)>-1\}$, we derive that $\zeta=0$ in this set. Since anyway $\zeta\le 0$ in $D$, we conclude that $\zeta\in\partial\mathbb{I}_{[-1,\infty)}(u)$ in $D$, and the proof of Lemma~\ref{EvLem019} is complete. 
\end{proof}

We also improve Lemma~\ref{saturnino} to the framework of Lemma~\ref{EvLem019}.

\begin{lem}\label{saturnino2}
Assume that $1/W\in L_r(D)$ for some $r>3d/2$ and let $f$ be a supersolution to \eqref{idefix} in the sense of Definition~\ref{DD}. Further let $u$ be the solution to \eqref{EvEq013} constructed in Lemma~\ref{EvLem019} corresponding to $1/h\ge \lambda\|1/W\|_\infty$. Then $u\le f$ in $D$, and $u$ is a supersolution to \eqref{idefix}.
\end{lem}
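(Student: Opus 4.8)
The plan is to prove Lemma~\ref{saturnino2} by reducing it to the already-established Lemma~\ref{saturnino} via the approximation procedure used in the proof of Lemma~\ref{EvLem019}. Recall that the solution $u$ to \eqref{EvEq013} furnished by Lemma~\ref{EvLem019} is obtained as a limit of solutions $(u_j,\zeta_j)$ to the regularized scheme with $W_j:=W+1/j$ in place of $W$. Since $1/W_j\in L_\infty(D)$, each $u_j$ falls within the scope of Lemma~\ref{saturnino}, and the obstacle is that the constraint $1/h\ge\lambda\|1/W\|_\infty$ invoked there is \emph{not} available here, as $\|1/W\|_\infty$ may be infinite. Thus the core of the argument is to redo the comparison estimate of Lemma~\ref{saturnino} under the weaker hypothesis $1/W\in L_r(D)$ with $r>3d/2$, controlling the cubic term $\|1/W\|_\infty^3\|(u-f)_+\|_2^2$ by interpolation rather than by an $L_\infty$ bound. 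This is exactly the more involved argument alluded to in Remark~\ref{waters}.

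First I would fix $j$ and compare $u_j$ with the given supersolution $f$. Writing $\zeta_f=F_f+\Delta f$ with $F_f\ge-\lambda g_W(f)$ and proceeding as in Lemma~\ref{saturnino}, I test the difference equation for $u_j-f$ against $(u_j-f)_+$. Using $-\int_D(u_j-f)\Delta(u_j-f)\,\rd x\ge 0$ (from \cite[Lemma~2]{BS73}) and the monotonicity of $\partial\mathbb{I}_{[-1,\infty)}$, I arrive at an inequality of the form
\begin{equation*}
\frac{1}{h}\|(u_j-f)_+\|_2^2 \le \frac{\lambda}{2}\int_D (u_j-f)_+^2\,\frac{(1+u_j+W)+(1+f+W)}{(1+u_j+W)^2(1+f+W)^2}\,\rd x\,.
\end{equation*}
The key difference is that I bound the kernel by $C/W^3$ and then invoke Hölder's inequality with the integrability $1/W\in L_r(D)$, giving a factor $\|1/W\|_r^3$ multiplying an $L_{s}$-norm of $(u_j-f)_+^2$ for the conjugate exponent. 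Here the condition $r>3d/2$ enters: the Gagliardo--Nirenberg inequality allows me to interpolate $\|(u_j-f)_+\|_{2s}$ between $\|\nabla(u_j-f)_+\|_2$ and $\|(u_j-f)_+\|_2$ with an interpolation exponent strictly less than one, precisely because $3d/2<r$ forces the relevant Sobolev exponent to stay subcritical. Absorbing the gradient term (controlled by the uniform $\mathring{H}^1$ bound \eqref{EvEq021}) and choosing $h$ small enough relative to $\|1/W\|_r$, I obtain $\|(u_j-f)_+\|_2^2\le\tfrac12\|(u_j-f)_+\|_2^2$, whence $(u_j-f)_+\equiv 0$, i.e. $u_j\le f$ in $D$, \emph{uniformly in $j$}.

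Next I pass to the limit $j\to\infty$. By \eqref{EvEq026b} the sequence $u_j$ converges to $u$ a.e. in $D$, so the pointwise bound $u_j\le f$ survives in the limit, yielding $u\le f$ in $D$. The main obstacle is genuinely the interpolation step: one must verify that the Hölder exponent dictated by $\|1/W\|_r^3$ together with the Gagliardo--Nirenberg scaling produces a power of $\|\nabla(u_j-f)_+\|_2$ that is strictly sublinear (equivalently, a power of $\|(u_j-f)_+\|_2^2$ that can be absorbed after applying Young's inequality), and this is where the threshold $r>3d/2$ is sharp. Finally, for the supersolution property I argue exactly as at the end of Lemma~\ref{saturnino}: from \eqref{EvEq013a} and $u\le f$ one has
\begin{equation*}
-\Delta u+\zeta=-\lambda g_W(u)+\frac{f-u}{h}\ge -\lambda g_W(u)\quad\text{in }D\,,
\end{equation*}
so that $u$ is a supersolution to \eqref{idefix} in the sense of Definition~\ref{DD}~(b), completing the proof.
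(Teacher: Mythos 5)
Your proposal is correct and takes essentially the same route as the paper: the paper also compares each approximation $u_j$ (built with $W_j=W+1/j\ge W$) with $f$, invoking Lemma~\ref{saturnino} together with Remark~\ref{waters} --- whose H\"older/Gagliardo--Nirenberg/Young absorption argument is exactly what you spell out --- and then lets $j\to\infty$ using the convergences \eqref{EvEq026} and \eqref{EvEq029}. The only immaterial difference is the last step: the paper passes to the limit in the inequality $-\Delta u_j+\zeta_j\ge -\lambda g_W(u_j)$, whereas you read the supersolution property of $u$ directly off the limit equation \eqref{EvEq013a} combined with $u\le f$; both are valid.
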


\begin{proof}
We keep the notation of the proof of Lemma~\ref{EvLem019}. Since $W_j\ge W$ in $D$ for all $j\ge 1$, it readily follows from Lemma~\ref{saturnino} and Remark~\ref{waters} that $u_j\le f$ in $D$ and 
$$
F_{u_j}:=-\Delta u_j+\zeta_j\ge -\lambda g_W(u_j)\quad \text{in }\ D\,.
$$
Owing to the convergences  stated in \eqref{EvEq026} and \eqref{EvEq029} we may let $j\rightarrow \infty$ in the previous two inequalities to conclude that $u\le f$ in $D$, and that $u$ is a supersolution to \eqref{idefix}.
\end{proof}

We are now in a position to prove Theorem~\ref{EvThm002}.

\begin{proof}[Proof of Theorem~\ref{EvThm002}]
Set $M:= \|(u_0)_+\|_\infty\ge 0$ and consider $h\in (0,1)$. Defining $(u_0^h, \zeta_0^h):=(u_0,0)$ we use Lemma~\ref{EvLem019} to construct by induction a sequence $(u_n^h,\zeta_n^h)_{n\ge 0}$ in $\mathcal{A}\times L_1(D)$ such that, for all $n\ge 0$, $u_{n+1}^h\in D(\Delta_1)$ and $(u_{n+1}^h,\zeta_{n+1}^h)$ solves
\begin{subequations}\label{EvEq031}
\begin{align}
\frac{u_{n+1}^h-u_n^h}{h} - \Delta u_{n+1}^h + \zeta_{n+1}^h & = - \lambda g_W(u_{n+1}^h) \;\text{ in }\;  D\ , \label{EvEq031a} \\
\zeta_{n+1}^h & \in \partial\mathbb{I}_{[-1,\infty)}(u_{n+1}^h) \;\text{ in }\; D\ , \label{EvEq031b} \\
u_{n+1}^h & = 0 \;\text{ on }\; \partial D\ . \label{EvEq031c}
\end{align}
\end{subequations}
In addition, for all $n\ge 0$, 
\begin{align}
& - 1 \le u_{n+1}^h \le M\ , \qquad x\in D\ , \label{EvEq032} \\
& \frac{1}{2h} \left\| u_{n+1}^h-u_n^h \right\|_2^2 + \mathcal{E}_W(u_{n+1}^h) \le \mathcal{E}_W(u_n^h)\ , \label{EvEq033} \\
& \int_D \Phi(\zeta_{n+1}^h)\ \mathrm{d}x \le \frac{\Phi''(0)}{h} \frac{\left\| u_{n+1}^h-u_n^h \right\|_2^2}{h} + C_0\ , \label{EvEq034}
\end{align}
the function $\Phi$ and the constant $C_0$ being defined in Lemma~\ref{EvLem019}. Introducing the time-dependent piecewise constant functions
\begin{subequations}\label{EvEq035}
\begin{align}
u^h(t,x) := \sum_{n\ge 0} u_n^h(x) \mathbf{1}_{[nh,(n+1)h)}(t)\ , \qquad (t,x)\in [0,\infty)\times D\ , \label{EvEq035a} \\ 
\zeta^h(t,x) := \sum_{n\ge 0} \zeta_n^h(x) \mathbf{1}_{[nh,(n+1)h)}(t)\ , \qquad (t,x)\in [0,\infty)\times D\ ,\label{EvEq035b}
\end{align}
\end{subequations}
we infer from \eqref{EvEq033} that, for $n\ge 0$, 
\begin{equation}
\frac{1}{2h} \sum_{m=0}^n \left\| u_{m+1}^h-u_m^h \right\|_2^2 + \mathcal{E}_W(u_{n+1}^h) \le \mathcal{E}_W(u_0)\ . \label{EvEq036}
\end{equation}
In turn, \eqref{EvEq036} gives
\begin{equation}
\frac{1}{h} \sum_{m=0}^n \left\| u_{m+1}^h-u_m^h \right\|_2^2 + \left\| \nabla u_{n+1}^h \right\|_2^2 \le C_1 := 2\mathcal{E}_W(u_0) + \left\| \frac{1}{W} \right\|_1\ , \qquad n\ge 0\ . \label{EvEq037}
\end{equation}
Now, combining \eqref{EvEq034} and \eqref{EvEq037} leads us to 
$$
\sum_{m=0}^{n+1} \int_D \Phi(\zeta_m^h)\ \mathrm{d}x = \sum_{m=0}^{n} \int_D \Phi(\zeta_{m+1}^h)\ \mathrm{d}x \le C_1 \frac{\Phi''(0)}{h} + (n+1) C_0\ ,
$$
hence
\begin{equation}
\sum_{m=0}^{n+1} h \int_D \Phi(\zeta_m^h)\ \mathrm{d}x \le C_1 \Phi''(0) + C_0 (n+1)h\ , \qquad n\ge 0\ . \label{EvEq038}
\end{equation}

Let us now fix $t>0$ and translate the above derived estimates in terms of $u^h$ and $\zeta^h$. Since $t\in [(n+1)h,(n+2)h)$ for some $n\ge -1$, it follows from \eqref{EvEq032} and \eqref{EvEq037} for $n\ge 0$ and from the definition of $u^h$ for $n=-1$ that
\begin{equation}
- 1 \le u^h(t) \le M \;\text{ in }\; D\ , \qquad \|\nabla u^h(t)\|_2^2 \le C_1\ . \label{EvEq039}
\end{equation}
Furthermore, by \eqref{EvEq038} for $n\ge 0$ and the definition of $\zeta^h$ for $n=-1$,
\begin{align}
\int_0^t \int_D \Phi(\zeta^h(\tau,x))\ \mathrm{d}x\mathrm{d}\tau & \le \int_0^{(n+2)h} \int_D \Phi(\zeta^h(\tau,x))\ \mathrm{d}x\mathrm{d}\tau \nonumber \\
& = \sum_{m=0}^{n+1} \int_{mh}^{(m+1)h} \int_D \Phi(\zeta^h(\tau,x))\ \mathrm{d}x\mathrm{d}\tau \nonumber \\
& = \sum_{m=0}^{n+1} h \int_D \Phi(\zeta_m^h(x))\ \mathrm{d}x \nonumber \\
& \le C_1 \Phi''(0) + C_0 t\ . \label{EvEq040}
\end{align}
We finally deduce from \eqref{EvEq035a}, \eqref{EvEq037}, and Cauchy-Schwarz' inequality  that
\begin{equation}
\|u^h(t)-u^h(s)\|_2 \le \sqrt{C_1} \sqrt{t-s+h}\ , \qquad s\in [0,t]\ . \label{EvEq041} 
\end{equation}

Owing to the compactness of the embedding of $\mathring{H}^1(D)$ in $L_2(D)$, the estimates \eqref{EvEq039} and \eqref{EvEq041} allow us to apply the variant of Arzel\`a-Ascoli theorem stated in \cite[Proposition~3.3.1]{AGS08} to obtain the existence of $u\in C([0,\infty);L_2(D))$ and a sequence $(h_k)_{k\ge 1}$ of positive real numbers such that
\begin{equation}
\lim_{k\to\infty} h_k = 0\ , \qquad \lim_{k\to\infty} \|u^{h_k}(t) - u(t)\|_2 = 0 \;\text{ for all }\; t\ge 0\ . \label{EvEq042}
\end{equation}
In addition, for all $T>0$, the superlinearity \eqref{EvEq022} of $\Phi$, the bound \eqref{EvEq040}, and Dunford-Pettis' theorem guarantee that $(\zeta^h)_h$ is relatively weakly sequentially compact in $L_1((0,T)\times D)$ while $(\nabla u^h)_h$ is obviously relatively weakly compact in $L_2((0,T)\times D;\mathbb{R}^d)$ according to \eqref{EvEq039}. We may thus further assume that there is $\zeta\in L_1((0,T)\times D)$ such that
\begin{align}
\zeta^{h_k} & \rightharpoonup \zeta \;\;\text{ in }\;\; L_1((0,T)\times D)\ , \label{EvEq043} \\
\nabla u^{h_k} & \rightharpoonup \nabla u \;\;\text{ in }\;\; L_2((0,T)\times D;\mathbb{R}^d)\ , \label{EvEq044} \\
u^{h_k} & \longrightarrow u \;\;\text{ a.e. in }\;\; (0,T)\times D\ . \label{EvEq045}
\end{align}
A first consequence of \eqref{EvEq039}, \eqref{EvEq044}, and \eqref{EvEq045} is that $u(t)\in \mathcal{A}$ and satisfies \eqref{EvEq003} for all $t\ge 0$. Next, \eqref{EvEq042}, \eqref{EvEq045}, the square integrability of $1/W$, and Lebesgue's dominated convergence theorem entail that
\begin{equation}
\lim_{k\to\infty} \int_0^t \int_D \frac{\mathrm{d}x\mathrm{d}\tau}{(1+u^{h_k}(\tau,x) + W(x))^2} = \int_0^t \int_D \frac{\mathrm{d}x\mathrm{d}\tau}{(1+u(\tau,x)+ W(x))^2}\ , \qquad t\ge 0\ . \label{EvEq046}
\end{equation}
Also, we argue as in the proof of Lemma~\ref{EvLem019} to deduce from \eqref{EvEq039}, \eqref{EvEq043}, and \eqref{EvEq045} that $u$ and $\zeta$ are  related according to Definition~\ref{EvDef001}~(c).

Let us now identify the equation solved by $(u,\zeta)$. To this end, consider $\vartheta\in \mathring{H}^1(D)\cap L_\infty(D)$ and $t>0$. For $k$ large enough,  there is $n_k\ge 0$ such that $t\in [(n_k+1)h_k,(n_k+2)h_k)$ and a classical computation relying on \eqref{EvEq031a}, \eqref{EvEq031b}, and \eqref{EvEq035} gives (recalling that $(u_0^{h_k},\zeta_0^{h_k})=(u_0,0)$)
\begin{align*}
\int_D (u^{h_k}(t)-u_0) \vartheta\ \mathrm{d}x & = - \int_0^t \int_D \left[ \nabla u^{h_k}\cdot \nabla\vartheta + \zeta^{h_k} \vartheta + \frac{\lambda \vartheta}{2(1+u^{h_k}+W)^2} \right]\ \mathrm{d}x\mathrm{d}\tau \\
& \qquad + \int_0^{h_k} \int_D \left[ \nabla u_0\cdot \nabla\vartheta + \frac{\lambda \vartheta}{2(1+u_0+W)^2} \right]\ \mathrm{d}x\mathrm{d}\tau \\
& \qquad + \int_t^{(n_k+2)h_k} \int_D \left[ \nabla u^{h_k}\cdot \nabla\vartheta + \zeta^{h_k} \vartheta + \frac{\lambda \vartheta}{2(1+u^{h_k}+W)^2} \right]\ \mathrm{d}x\mathrm{d}\tau\ .
\end{align*}
Thanks to the convergences \eqref{EvEq042}, \eqref{EvEq043}, \eqref{EvEq044}, and \eqref{EvEq046}, we may pass to the limit as $k\to\infty$ in the previous identity and deduce that $(u,\zeta)$ solves \eqref{EvEq001a}, \eqref{EvEq001b} in the weak sense \eqref{EvEq011}.

We are left with passing to the limit in the discrete energy inequality \eqref{EvEq036}. For $t>0$ and $k$ large enough, there is $n_k\ge 0$ such that $t \in [(n_k+1)h_k,(n_k+2)h_k)$. Then $u^{h_k}(t) = u_{n_k+1}^{h_k}$ and the discrete energy inequality \eqref{EvEq036} reads
\begin{equation}
\frac{1}{2h_k} \sum_{m=0}^{n_k} \left\| u_{m+1}^{h_k} - u_m^{h_k} \right\|_2^2 + \mathcal{E}_W(u^{h_k}(t)) \le \mathcal{E}_W(u_0)\ . \label{EvEq047}
\end{equation}
On the one hand, we infer from \eqref{EvEq039} that $(u^{h_k}(t))_{k\ge 1}$ is bounded in $\mathring{H}^1(D)\cap L_\infty(D)$ and converges towards $u(t)$ in $L_2(D)$. We may thus extract a subsequence of $(u^{h_k}(t))_{k\ge 1}$ (possibly depending on $t$) which converges weakly towards $u(t)$ in $\mathring{H}^1(D)$ as well as a.e. in $D$. These properties along with the integrability of $1/W$ and Lebesgue's dominated convergence theorem readily imply that
\begin{equation}
\mathcal{E}_W(u(t)) \le \liminf_{k\to\infty} \mathcal{E}_W(u^{h_k}(t))\ . \label{EvEq048}
\end{equation}
Also, for $\delta>h_k$,
\begin{align}
\frac{1}{2h_k} \sum_{m=0}^{ n_k} \left\| u_{m+1}^{h_k} - u_m^{h_k} \right\|_2^2 & = \frac{1}{2h_k^2} \sum_{m=0}^{ n_k} \int_{m h_k}^{(m+1) h_k} \left\| u_{m+1}^{h_k} - u_m^{h_k} \right\|_2^2\ \mathrm{d}\tau \nonumber \\
& = \frac{1}{2} \int_0^{(n_k+1) h_k} \left\| \frac{u^{h_k}(\tau+h_k) - u^{h_k}(\tau)}{h_k} \right\|_2^2 \ \mathrm{d}\tau \nonumber \\
& \ge \frac{1}{2} \int_0^{t-\delta} \left\| \frac{u^{h_k}(\tau+  h_k) - u^{h_k}(\tau)}{h_k} \right\|_2^2 \ \mathrm{d}\tau\ . \label{EvEq049}
\end{align}
Let $\delta\in (0,t/2)$. Since 
$$
(\tau,x) \longmapsto \frac{u^{h_k}(\tau+h_k) - u^{h_k}(\tau)}{h_k}
$$
converges to $\partial_t u$ in $\mathcal{D}'((0,\infty)\times D)$ by \eqref{EvEq042} and is bounded in $L_2((0,t-\delta)\times D)$ due to \eqref{EvEq037} and \eqref{EvEq049}, we realize that $\partial_t u$ belongs to $L_2((0,t-\delta)\times D)$ and satisfies 
$$
\frac{1}{2}\int_0^{t-\delta} \|\partial_t u(\tau)\|_2^2\ \mathrm{d}\tau \le \liminf_{k\to \infty} \frac{1}{2h_k} \sum_{m=0}^{n_k} \left\| u_{m+1}^{h_k} - u_m^{h_k} \right\|_2^2\ .
$$
Since $\delta\in (0,t/2)$ was arbitrarily chosen, the previous inequality is also valid for $\delta=0$ and we combine it with \eqref{EvEq047} and \eqref{EvEq048} to conclude that $u$ satisfies the energy inequality \eqref{EvEq010}. 

Finally, if there are $\kappa\in (0,1)$ and $T>0$ such that $u\ge \kappa-1$ in $(0,T)\times D$, then $g_W(u) \le 1/(2\kappa^2)$ and thus belongs to $L_\infty(0,T)\times D)$. Classical parabolic regularity results then complete the proof of Theorem~\ref{EvThm002}.
\end{proof}

We immediately derive the time monotonicity of the just constructed solution  when the initial value is a supersolution to \eqref{idefix}.

\begin{prop}\label{jovanotti}
Let $1/W\in L_r(D)$ for some $r>3d/2$ and let $u_0\in \mathcal{A}\cap L_\infty(D)$ be a supersolution to \eqref{idefix}. If $u$ denotes the corresponding solution to \eqref{EvEq001} constructed in Theorem~\ref{EvThm002}, then for a.a. $x\in D$ the function $t\mapsto u(t,x)$ is non-increasing.
\end{prop}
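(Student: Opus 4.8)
The plan is to exploit the time-implicit Euler scheme underlying the construction of $u$ in Theorem~\ref{EvThm002} and to show that, when the scheme is started from a supersolution, the discrete iterates decrease monotonically in $n$; this discrete monotonicity then transfers to the time-continuous interpolant and survives the passage to the limit. Recall that $u$ is obtained as an $L_2$-limit of the piecewise constant interpolants $u^{h_k}$ built from the iterates $(u_n^h)_{n\ge 0}$ defined by $u_0^h=u_0$ and \eqref{EvEq031}, where $u_{n+1}^h$ is the solution of the Euler step with data $u_n^h$ produced by Lemma~\ref{EvLem019}.

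The heart of the argument is an induction based on Lemma~\ref{saturnino2} together with Remark~\ref{waters}. Since $1/W\in L_r(D)$ with $r>3d/2$, for $h$ sufficiently small (depending only on $\lambda$ and the norm of $1/W$ in $L_r(D)$) the Euler step maps any supersolution $f$ to a function $u\le f$ which is again a supersolution to \eqref{idefix}. As $u_0^h=u_0$ is a supersolution by hypothesis, applying this repeatedly yields, for all such small $h$ and all $n\ge 0$, that $u_{n+1}^h\le u_n^h$ in $D$ and that $u_{n+1}^h$ is itself a supersolution to \eqref{idefix}. Hence the Euler sequence is non-increasing for every $h$ below the threshold, in particular for $h=h_k$ once $k$ is large enough.

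Consequently, by the definition \eqref{EvEq035a} of the interpolant, $u^{h_k}(t_1)\ge u^{h_k}(t_2)$ in $D$ whenever $0\le t_1\le t_2$. For such a fixed pair the convergence \eqref{EvEq042} gives $u^{h_k}(t_i)\to u(t_i)$ in $L_2(D)$, and since $L_2$-convergence preserves a.e. inequalities (testing with non-negative functions, or passing to an a.e.-convergent subsequence), we obtain $u(t_1)\ge u(t_2)$ a.e. in $D$ for every pair $0\le t_1\le t_2$.

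It remains to upgrade this to the pointwise statement, which is the one genuinely delicate point since an uncountable union of null sets need not be null. I would first restrict to the non-negative rationals $\mathbb{Q}_+$ and intersect the countably many null sets associated with rational pairs to obtain a full-measure set $D_0\subset D$ on which $q\mapsto u(q,x)$ is non-increasing along $\mathbb{Q}_+$. Using the uniform bounds $-1\le u\le M$ from \eqref{EvEq003} together with the continuity $u\in C([0,\infty);L_2(D))$, the right-continuous extension $\tilde u(t,x):=\lim_{q\downarrow t,\,q\in\mathbb{Q}_+} u(q,x)$ is then well-defined and non-increasing in $t$ for each $x\in D_0$, and a dominated convergence argument shows $\tilde u(t)=u(t)$ a.e. in $D$ for every $t\ge 0$. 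Thus $\tilde u$ is a representative of $u$ that is non-increasing in time for a.e. $x\in D$, which is the claim. The main obstacle is therefore to set up the induction in the second paragraph correctly—ensuring that the small-$h$ hypothesis of Lemma~\ref{saturnino2}/Remark~\ref{waters} is met and that the iterates feeding Theorem~\ref{EvThm002} are exactly those of Lemma~\ref{EvLem019}—while the measure-theoretic passage in the last paragraph, though unavoidable, is routine.
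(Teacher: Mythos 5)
Your proposal is correct and follows essentially the same route as the paper: an induction via Lemma~\ref{saturnino2} (with Remark~\ref{waters} supplying the smallness condition on $h$) showing that the Euler iterates $u_n^h$ decrease and remain supersolutions, followed by passage to the limit through the convergence \eqref{EvEq042}. The only difference is that you spell out the measure-theoretic upgrade from pairwise a.e.\ inequalities to a single full-measure set via rational times and a right-continuous representative, a point the paper leaves implicit in the phrase ``the assertion follows from \eqref{EvEq042}.''
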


\begin{proof}
We keep the notation of the proof of Theorem~\ref{EvThm002}. Thanks to Lemma~\ref{saturnino2} and the assumption on the initial value $u_0$, an induction argument entails that $u_n^h\ge u_{n+1}^h$ in $D$ and $u_{n+1}^h$ is a supersolution to \eqref{idefix} for all $n\in \N$ provided that $h$ is small enough. Therefore, the function $t\mapsto u^h(t,x)$ is non-increasing for a.a. $x\in D$ and the assertion follows from \eqref{EvEq042}.
\end{proof}

We next focus on the uniqueness of energy solutions when $1/W$ enjoys suitable integrability properties and actually prove a comparison principle. 

\begin{proof}[Proof of Theorem~\ref{EvThm005}: Uniqueness and comparison principle]
Let $1/W\in L_r(D)$ with $r>3d/2$. Setting $p:=2r/(r-3)$ when $r>3$ and $p:=\infty$ when $r\le 3$ (the latter being possible only in one space dimension $d=1$), the constraint $r>3d/2$ guarantees that $p\in (2,2^*)$ when it is finite, the Sobolev exponent $2^*$ being given by $2^*:=2d/(d-2)$ for $d\ge 3$ and $2^*:=\infty$ for $d\in\{1,2\}$. This choice of $p$ implies the validity of the Gagliardo-Nirenberg inequality
$$
\|w\|_p \le C \|\nabla w\|_2^\theta \|w\|_2^{1-\theta}\ , \qquad w\in \mathring{H}^1(D)\ ,
$$
where $\theta := d(p-2)/2p \in (0,1)$ for $p$ finite and $\theta:=1/2$ for $p=\infty$ and $C$ depends only on $D$, $d$, and $p$. 

Now, let $u_0, v_0\in\mathcal{A}\cap L_\infty(D)$ and consider two energy solutions $u$ and $v$ to \eqref{EvEq001} in the sense of Definition~\ref{EvDef001} with initial  values $u_0$ and $v_0$, respectively. Owing to \eqref{EvEq001b}, the boundedness of $v-u$, and the integrability of $\zeta_v-\zeta_u$, there holds
$$
\int_D (\zeta_v-\zeta_u)(v-u)_+\ \mathrm{d}x\ge 0\ ,
$$
and we infer from \eqref{EvEq001} that
\begin{align*}
\frac{1}{2} \frac{d}{dt} \|(v-u)_+\|_2^2 & \le - \|\nabla(v-u)_+\|_2^2 + \frac{\lambda}{2} \int_D \frac{(1+u+W+1+v+W)(v-u)_+^2}{(1+u+W)^2(1+v+W)^2}\ \mathrm{d}x \\
& \le - \|\nabla(v-u)_+\|_2^2 + \lambda \int_D \frac{(v-u)_+^2}{W^3}\ \mathrm{d}x\ .
\end{align*}
We next use H\"older's inequality along with the previously recalled Gagliardo-Nirenberg inequality to obtain
\begin{align*}
\frac{1}{2} \frac{d}{dt} \|(v-u)_+\|_2^2 & \le - \|\nabla(v-u)_+\|_2^2 + \lambda \|(v-u)_+\|_p^2 \|W^{-1}\|_r^3 \\
& \le - \|\nabla(v-u)_+\|_2^2 + \lambda C^2 \|W^{-1}\|_r^3 \|\nabla(v-u)_+\|_2^{2\theta} \|(v-u)_+\|_2^{2(1-\theta)}\ .
\end{align*}
We finally deduce from Young's inequality that
\begin{align*}
\frac{1}{2} \frac{d}{dt} \|(v-u)_+\|_2^2 & \le (\theta-1) \|\nabla(v-u)_+\|_2^2 + (1-\theta) \left[ \lambda C^2 \|W^{-1}\|_r^3 \right]^{1/(1-\theta)} \|(v-u)_+\|_2^2 \\
& \le \left[ \lambda C^2 \|W^{-1}\|_r^3 \right]^{1/(1-\theta)} \|(v-u)_+\|_2^2\ .
\end{align*}
Integrating the previous differential inequality gives
\begin{equation}
\|(v-u)_+(t)\|_2^2 \le e^{C_2 t} \|(v_0-u_0)_+\|_2^2 \ , \qquad t\ge 0\ , \qquad C_2 := \left[ \lambda C^2 \|W^{-1}\|_r^3 \right]^{1/(1-\theta)}\ . \label{EvEq050}
\end{equation}
On the one hand, it readily follows from \eqref{EvEq050} that, if $u_0\le v_0$ a.e. in $D$, then $u(t)\le v(t)$ a.e. in $D$ for all $t\ge 0$. On the other hand, using again \eqref{EvEq050}, we realize that
\begin{align*}
\|(v-u)(t)\|_2^2 & = \|(v-u)_+(t)\|_2^2 + \|(u-v)_+(t)\|_2^2 \\
& \le e^{C_2 t} \left( \|(v_0-u_0)_+\|_2^2 + \|(u_0-v_0)_+\|_2^2 \right) = e^{C_2 t} \|v_0-u_0\|_2^2\ ,
\end{align*}
hence the claimed uniqueness. 
\end{proof}

\section{The evolution problem: large time dynamics}\label{Sec6}

We now investigate the large time behavior of energy solutions by characterizing the $\omega$-limit sets as stated in Theorem~\ref{EvThm004}.

\begin{proof}[Proof of Theorem~\ref{EvThm004}]
 Fix $u_0\in \mathcal{A}\cap L_\infty(D)$ and consider an energy solution $(u,\zeta)$ satisfying \eqref{EvEq003} as provided by Theorem~\ref{EvThm002}. The energy inequality \eqref{EvEq010} and the square integrability of $1/W$ imply then that
$$
\int_0^t \|\partial_t u(\tau)\|_2^2\ \mathrm{d}\tau + \|\nabla u(t)\|_2^2 \le C_1 = 2 \mathcal{E}_W(u_0) + \left\| \frac{\lambda}{W} \right\|_1\ , \qquad t\ge 0\ ,
$$
and thus
\begin{equation}
\int_0^\infty \|\partial_t u(\tau)\|_2^2\ \mathrm{d}\tau + \sup_{t\ge 0}\left\{ \|\nabla u(t)\|_2^2 \right\} \le C_1\ . \label{LDEq001}
\end{equation}
By \eqref{LDEq001}, $(u(t))_{t\ge 0}$ is bounded in $\mathring{H}^1(D)$ and thus relatively compact in $L_2(D)$. Consequently, there are a sequence $(t_k)_{k\ge 1}$ of positive real numbers in $(1,\infty)$ and $v\in L_2(D)$ such that
\begin{equation}
\lim_{k\to\infty} t_k = \infty\ , \qquad \lim_{k\to\infty} \|u(t_k)-v\|_2=0\ . \label{LDEq002}
\end{equation}
We now define the sequences $(V_k,\xi_k)_{k\ge 1}$ by
$$
(V_k,\xi_k)(s,x) := (u,\zeta)(s+t_k,x)\ , \qquad (s,x)\in [-1,1]\times D\,,
$$
where
$$
\zeta:=-\lambda g_W(u)+\Delta u-\partial_t u\,.
$$
On the one hand, it readily follows from \eqref{LDEq001} that 
$$
(V_k)_{k\ge 1} \text{ is bounded in } L_\infty(-1,1;\mathring{H}^1(D)) \text{ and in } W_2^1(-1,1;L_2(D))\ .
$$ 
Owing to the compactness of the embedding of $\mathring{H}^1(D)$ in $L_2(D)$, we infer from \cite[Corollary~4]{Si87} that there are $V\in C([-1,1];L_2(D))$ and a subsequence of $(V_k)_{k\ge 1}$ (not relabeled) such that
\begin{align}
V_k & \longrightarrow V \;\text{ in }\; C([-1,1];L_2(D)) \;\text{ and a.e. in }\; (-1,1)\times D\ , \label{LDEq003} \\
\nabla V_k & \rightharpoonup \nabla V \;\text{ in }\; L_2((-1,1)\times D;\mathbb{R}^d)\ . \label{LDEq004}
\end{align}
A first consequence of \eqref{LDEq001} and \eqref{LDEq003} is that, for all $s\in [-1,1]$, 
\begin{align*}
\|V(s) - v\|_2 & = \lim_{k\to\infty} \|V_k(s) - V_k(0)\|_2 \le \lim_{k\to\infty} \left| \int_{t_k}^{s+t_k} \|\partial_t u(\tau)\|_2\ \mathrm{d}\tau \right| \\ 
& \le \sqrt{2} \lim_{k\to\infty} \left( \int_{-1+t_k}^{1+t_k} \|\partial_t u(\tau)\|_2^2\ \mathrm{d}\tau \right)^{1/2} = 0\ ,
\end{align*}
so that
\begin{equation}
V(s) \equiv v\ , \qquad s\in [-1,1]\ . \label{LDEq005}
\end{equation}
Another consequence of \eqref{LDEq003}, the square integrability of $1/W$, and Lebesgue's dominated convergence theorem is that
\begin{equation}
\lim_{k\to\infty} \int_{-1}^1 \int_D g_W(V_k) \mathrm{d}x \mathrm{d}s = \int_{-1}^1 \int_D  g_W(V)\mathrm{d}x \mathrm{d}s\ . \label{LDEq006}
\end{equation}

On the other hand, we introduce
$$
G_k := - \lambda g_W(V_k) - \partial_s V_k\ , \qquad k\ge 1\ .
$$
Since $(\partial_s V_k)_{k\ge 1}$ is bounded in $L_2((-1,1)\times D)$ by \eqref{LDEq001} and 
$$
\left| \lambda g_W(V_k) \right| \le \frac{\lambda}{2W^2} \in L_1((-1,1)\times D)\ ,
$$
the sequence $(G_k)_{k\ge 1}$ is the sum of two sequences which are relatively weakly sequentially compact in $L_1((-1,1)\times D)$ and is thus also relatively weakly sequentially compact in $L_1((-1,1)\times D)$. Using once more the de la Vall\'ee-Poussin theorem \cite{Le77, La15}, there is a non-negative and even convex function $\Phi\in C^2(\mathbb{R})$ such that
\begin{equation}
\lim_{r\to \infty} \frac{\Phi(r)}{r} = \infty\ , \qquad C_2 := \sup_{k\ge 1} \left\{ \int_{-1}^1 \int_D \Phi(|G_k|)\ \mathrm{d}x\mathrm{d}s \right\} < \infty\ . \label{LDEq008}
\end{equation} 
Furthermore, the regularity of $(u,\zeta)$ ensures that, for almost every $s\in [-1,1]$, $V_k(s)\in \mathring{H}^1(D)$, $\Delta V_k(s)\in L_1(D)$, $\xi_k(s)\in L_1(D)$, and $G_k(s) \in L_1(D)$. Together with the weak formulation \eqref{EvEq011} of \eqref{EvEq001} and \cite[Theorem~1]{BS73}, these properties imply that $V_k(s)$ is the unique solution to 
\begin{subequations}\label{LDEq007}
\begin{align}
-\Delta V_k(s) + \xi_k(s) & = G_k(s) \;\text{ in }\; D\ , \label{LDEq007a} \\
\xi_k(s) & \in \partial\mathbb{I}_{[-1,\infty)}(V_k(s)) \;\text{ in }\; D\ , \label{LDEq007b} \\
V_k(s) & = 0 \;\text{ on }\; D\ . \label{LDEq007c}
\end{align}
\end{subequations}
We then apply \cite[Proposition~4]{BS73} to deduce from \eqref{LDEq007} that, for all $k\ge 1$ , 
$$
\int_D \Phi(\xi_k(s,x))\ \mathrm{d}x \le \int_D \Phi(G_k(s,x))\ \mathrm{d}x \;\text{ for almost every }\; s\in (-1,1)\ ,
$$
hence, thanks to \eqref{LDEq008},
$$
\int_{-1}^1 \int_D \Phi(\xi_k(s,x))\ \mathrm{d}x\mathrm{d}s \le C_2\ .
$$
The superlinearity \eqref{LDEq008} of $\Phi$ along with the previous bound and Dunford-Pettis' theorem entail that $(\xi_k)_{k\ge 1}$ is relatively weakly sequentially compact in $L_1((-1,1)\times D)$. Consequently, there are $\xi\in L_1((-1,1)\times D)$ and a subsequence of $(\xi_k)_{k\ge 1}$ (not relabeled) such that 
\begin{equation}
\xi_k \rightharpoonup \xi \;\text{ in }\; L_1((-1,1)\times D)\ . \label{LDEq009}
\end{equation}
Now, to identify the equation solved by $(V,\xi)$, we infer from the weak formulation \eqref{EvEq011} of \eqref{EvEq001} that, for $\vartheta\in\mathcal{A}\cap L_\infty(D)$ and $k\ge 1$,
\begin{align*}
& \int_{-1}^1 \int_D \left[ \nabla V_k \cdot \nabla\vartheta + \xi_k \vartheta + \lambda\vartheta g_W(V_k) \right]\ \mathrm{d}x\mathrm{d}s \\
& = \int_{-1+t_k}^{1+t_k} \int_D \left[ \nabla u \cdot \nabla\vartheta + \zeta \vartheta + \lambda\vartheta g_W(u) \right]\ \mathrm{d}x\mathrm{d}\tau \\
& = \int_D [u(-1+t_k)-u(1+t_k)] \vartheta\ \mathrm{d}x  = \int_{-1+t_k}^{1+t_k} \int_D \vartheta \partial_t u\ \mathrm{d}x\mathrm{d}\tau\ . 
\end{align*}
By Cauchy-Schwarz' inequality,
\begin{align*}
\left| \int_{-1+t_k}^{1+t_k} \int_D \vartheta \partial_t u\ \mathrm{d}x\mathrm{d}\tau \right| \le \|\vartheta\|_2 \int_{-1+t_k}^{1+t_k} \|\partial_t u\|_2\ \mathrm{d}\tau \le \sqrt{2} \|\vartheta\|_2 \left( \int_{-1+t_k}^{1+t_k} \|\partial_t u\|_2^2\ \mathrm{d}\tau \right)^{1/2}\ ,
\end{align*}
and the right-hand side of the above inequality converges to zero as $k\to\infty$ by \eqref{LDEq001}. Consequently,
\begin{equation}
\lim_{k\to\infty} \int_{-1}^1 \int_D \left[ \nabla V_k \cdot \nabla\vartheta + \xi_k \vartheta + \lambda\vartheta g_W(V_k) \right]\ \mathrm{d}x\mathrm{d}s = 0\ . \label{LDEq010}
\end{equation}

Gathering \eqref{LDEq004}, \eqref{LDEq005}, \eqref{LDEq006}, \eqref{LDEq009}, and \eqref{LDEq010}, we end up with
$$
\int_{-1}^1 \int_D \left[ \nabla v \cdot \nabla\vartheta + \xi \vartheta + \lambda\vartheta g_W(v) \right]\ \mathrm{d}x\mathrm{d}s = 0\ ,
$$
which entails, in particular, that $\xi$ does not depend on time and that $\Delta v = \xi + \lambda g_W(v)$ belongs to $L_1(D)$. We finally check that $\xi\in \partial\mathbb{I}_{[-1,\infty)}(v)$ a.e. in $D$ as in the proof of Lemma~\ref{EvLem012}, recalling that $-1 \le v \le \|(u_0)_+\|_\infty$ as a consequence of \eqref{EvEq003} and \eqref{LDEq002}. Also, $v$ belongs to $\omega(u_0)$ by \eqref{LDEq002}. Thus, $\omega(u_0)$ is non-empty and obviously bounded in $\mathring{H}^1(D)$ by \eqref{LDEq001} and Poincar\'e's inequality.

To finish off the proof, let us assume that $u_0\ge U_\lambda$. The comparison principle in Theorem~\ref{EvThm005} implies that $u(t)\ge U_\lambda$ for all $t\ge 0$. Consequently, if $v\in \omega(u_0)$, then $v\ge U_\lambda$ and the maximality of $U_\lambda$ stated in Proposition~\ref{P1} entails that $v=U_\lambda$ as claimed.
\end{proof}

Combining Proposition~\ref{jovanotti} and Theorem~\ref{EvThm004} gives several properties of the solution to \eqref{EvEq001} starting from the rest state $u_0=0$ as summarized in Theorem~\ref{obelix888}.

\begin{proof}[Proof of Theorem~\ref{obelix888}] Let $u_0=0$ and let $u$ be the corresponding solution to \eqref{EvEq001} given by Theorem~\ref{EvThm002}
and Theorem~\ref{EvThm005}. Clearly, $u_0$ is a supersolution to \eqref{idefix} and satisfies $u_0\ge U_\lambda$ in $D$. It then readily follows from Proposition~\ref{jovanotti} and Theorem~\ref{EvThm004} that $u(t_1)\ge u(t_2)\ge U_\lambda$ in $D$ for $t_1<t_2$. On the one hand, this ordering property obviously implies that 
\begin{equation}\label{gilmour}
\mathcal{C}(u(t_1))\subset \mathcal{C}(u(t_2))\subset \mathcal{C}(U_\lambda)\,.
\end{equation}
Since the measure of $\mathcal{C}(U_\lambda)$ equals 0 if $\lambda <\Lambda_z$, statements~(i) and~(ii) follow. 

As for statement~(iii), let $\varphi_1$ be the positive eigenfunction  to $-\Delta_1$ associated with the first eigenvalue $\mu_1$ and normalized as $\|\varphi_1\|_1=1$. It then follows from \eqref{EvEq011} that 
\begin{equation*}
 \int_0^t \int_D \zeta_u \varphi_1 \, \mathrm{d}x\mathrm{d}s =
-\int_D u(t) \varphi_1\ \mathrm{d}x  -\mu_1 \int_0^t \int_D  u\varphi_1 \ \mathrm{d}x\mathrm{d}s - \lambda \int_0^t\int_D \varphi_1 g_W(u)\ \mathrm{d}x\mathrm{d}s\,.
\end{equation*}
Since $-1\le u(s)\le 0$ in $D$ for all $s\ge 0$, we further obtain that
\begin{equation*}
 \int_0^t \int_D \zeta_u \varphi_1 \, \mathrm{d}x\mathrm{d}s \le 1 -\left(\lambda \int_D \varphi_1 g_W(0)\,\rd x-\mu_1\right) t\,.
\end{equation*}
Introducing
$$
\Lambda^*:=\frac{\mu_1}{\int_D \varphi_1 g_W(0)\,\rd x}
$$
and setting 
$$
T_z:=\frac{\Lambda^*}{\mu_1(\lambda-\Lambda^*)}\,,
$$
we realize that 
\begin{equation}\label{neville}
\int_0^t \int_D \zeta_u \varphi_1 \, \mathrm{d}x\mathrm{d}s <0
\end{equation}
for all $t>T_z$. Consequently, given $t>T_z$ there is $s(t)\in (0,t)$ such that $\zeta_{u}(s(t))\not\equiv 0$ and thus $\vert\mathcal{C}(u(s(t))\vert>0$. The time monotonicity \eqref{gilmour} of the coincidence set then implies that $\vert\mathcal{C}(u(t))\vert>0$ and the proof of Theorem~\ref{obelix888}.

To prove statement (iv) we proceed along the lines of \cite{BCMR,GhG08a} and construct a subsolution to \eqref{EvEq001} {\tred for} $\lambda>\Lambda_z$ which is well-separated from $-1$. This will eventually imply that the corresponding maximal stationary solution is unzipped, contradicting the assumption that $\lambda>\Lambda_z$. More specifically, set $M:= \|W\|_\infty$ and consider $\lambda>\Lambda_z$. Let $u$ be the solution to \eqref{EvEq001} with initial value $u_0=0$ and assume for contradiction that $\zeta_u(t)\equiv 0$ for all $t>0$. For $\varepsilon\in (0,1)$, we define the function 
\begin{equation}
\Psi_\varepsilon(r) := -1-M + \left[ \frac{(1+r+M)^3 - \varepsilon^3 (1+M)^3}{1-\varepsilon^3} \right]^{1/3}\ , \qquad r\in (r_\varepsilon, \infty)\ , \label{acdc1}
\end{equation}
with $r_\varepsilon := -(1+M)(1-\varepsilon)<0$. Observe that
\begin{equation}
\Psi_\varepsilon'(r) = \frac{(1+r+M)^2}{(1-\varepsilon^3)^{1/3} \left[ (1+r+M)^3 - \varepsilon^3 (1+M)^3 \right]^{2/3}} > 0\ , \qquad r\in (r_\varepsilon,\infty)\ , \label{acdc2}
\end{equation}
and
$$
\Psi_\varepsilon''(r) = - \frac{2 \varepsilon^3 (1+M)^3}{(1-\varepsilon^3)^{1/3}} \frac{(1+r+M)}{\left[ (1+r+M)^3 - \varepsilon^3 (1+M)^3 \right]^{5/3}} < 0\ , \qquad r\in (r_\varepsilon,\infty)\ ,
$$
so that $\Psi_\varepsilon$ is an increasing concave function from $(r_\varepsilon,\infty)$ onto $(-(1+M),\infty)$. We next define $v_\varepsilon := \Psi_\varepsilon^{-1}(u)$ in $(0,\infty)\times D$. Since $\Psi_\varepsilon$ is increasing and $u$ ranges in $[-1,0]$,  we obtain that
\begin{equation}
\varrho_\varepsilon := \left[ \varepsilon^3 (1+M)^3 + (1-\varepsilon^3) M^3 \right]^{1/3} - (1+M) \le v_\varepsilon \le 0 \;\;\text{ in }\;\; (0,\infty)\times D\ . \label{acdc3}
\end{equation}
Observe that the convexity of $r\mapsto r^3$ ensures that 
\begin{equation}\label{notadam}
\varrho_\varepsilon\ge \varepsilon^3-1>-1\,.
\end{equation}
We next infer from \eqref{EvEq001a} that $v_\varepsilon$ solves
\begin{equation}
\partial_t v_\varepsilon - \Delta v_\varepsilon = \frac{\Psi_\varepsilon''(v_\varepsilon)}{\Psi_\varepsilon'(v_\varepsilon)} |\nabla v_\varepsilon|^2 - \lambda (1-\varepsilon^3) g_W(v_\varepsilon) + \frac{\lambda}{2} \frac{S_\varepsilon(v_\varepsilon)}{H_\varepsilon(v_\varepsilon)} \;\text{ in }\; (0,\infty)\times D\ , \label{acdc4}
\end{equation}
where
$$
S_\varepsilon (r,x) := (1-\varepsilon^3) \Psi_\varepsilon'(r) \left[ 1+ \Psi_\varepsilon(r) + W(x) \right]^2 - \left[ 1+ r + W(x) \right]^2\ , \qquad (r,x)\in [\varrho_\varepsilon,0] \times D\,,
$$
and
$$
H_\varepsilon (r,x) :=\Psi_\varepsilon'(r) \left[ 1+ \Psi_\varepsilon(r) + W(x) \right]^2 \left[ 1+ r + W(x) \right]^2 >0\ , \qquad (r,x)\in [\varrho_\varepsilon,0] \times D\,.
$$
It follows from the definition \eqref{acdc1} of $\Psi_\varepsilon$ that, for $r\in [\varrho_\varepsilon,0]$,
\begin{equation}
S_\varepsilon (r,x) = (M-W(x)) \left[ 1 - R_\varepsilon(r) \right] \left[ 2 + 2r + M + W(x) - (M-W(x)) R_\varepsilon(r) \right]\ , \label{acdc5}
\end{equation}
where
$$
R_\varepsilon(r) := \frac{(1-\varepsilon^3)^{1/3} (1+r+M)}{\left[ (1+r+M)^3 - \varepsilon^3 (1+M)^3 \right]^{1/3}}\ .
$$
Since 
$$
R_\varepsilon'(r) = - \frac{\varepsilon^3  (1-\varepsilon^3)^{1/3} (1+M)^3}{\left[ (1+r+M)^3 - \varepsilon^3 (1+M)^3 \right]^{4/3}} \le 0\ , \qquad r\in [\varrho_\varepsilon,0]\ ,
$$
there holds $R_\varepsilon(0)\le  R_\varepsilon(r) \le R_\varepsilon(\varrho_\varepsilon)$ for $r\in [\varrho_\varepsilon,0]$, hence
\begin{equation}
1 \le R_\varepsilon(r) \le \frac{\varrho_\varepsilon + 1+ M}{M}\ , \qquad r\in [\varrho_\varepsilon,0]\ . \label{acdc6}
\end{equation}
Consequently, for $r\in [\varrho_\varepsilon,0]$ and $x\in D$, it follows from \eqref{acdc6} and the definition of $M$ that
\begin{align}
& 2 + 2r + M + W(x) - (M-W(x)) R_\varepsilon(r) \nonumber \\
& \qquad \ge 2 + 2 \varrho_\varepsilon + M + W(x) - \frac{M-W(x)}{M} (\varrho_\varepsilon + 1+ M) \nonumber \\
& \qquad = \frac{M+W(x)}{M} (1+\varrho_\varepsilon) + 2 W(x) \ge 0\ . \label{acdc7}
\end{align}
We then infer from \eqref{acdc5}, \eqref{acdc6}, \eqref{acdc7}, and the definition of $M$ that $S_\varepsilon(r,x)\le 0$ for $(r,x)\in  [\varrho_\varepsilon,0]\times D$. Along with \eqref{acdc4} and the monotonicity and concavity of $\Psi_\varepsilon$, this readily implies that 
\begin{subequations}\label{acdc8}
\begin{equation}
\partial_t v_\varepsilon - \Delta v_\varepsilon \le - \lambda (1-\varepsilon^3) g_W(v_\varepsilon) \;\text{ in }\; (0,\infty)\times D\,, \label{acdc8a}
\end{equation}
while \eqref{acdc3} and \eqref{notadam} entail that $g_W(v_\ve)$ belongs to $L_\infty((0,\infty)\times D)$. 
In addition, since $\Psi_\varepsilon(0)=0$, it follows from \eqref{EvEq001c} and \eqref{EvEq001d} that 
\begin{align}
v_\varepsilon & = 0 \;\text{ on }\; (0,\infty)\times \partial D\ ,\label{acdc8b} \\
v_\varepsilon(0) & = 0 \;\text{ in }\; D\ . \label{acdc8c}
\end{align}
\end{subequations}
Thanks to \eqref{acdc3}, \eqref{notadam}, and \eqref{acdc8}, we can construct by a classical Perron method a solution 
$$
u_\varepsilon\in C^1([0,\infty),L_2(D))\cap C([0,\infty),W_2^2(D))
$$ 
to the initial boundary value problem
\begin{align*}
\partial_t u_\varepsilon - \Delta u_\varepsilon & = - \lambda (1-\varepsilon^3) g_W(u_\varepsilon) \;\text{ in }\; (0,\infty)\times D\ , \\
u_\varepsilon & = 0 \;\text{ on }\; (0,\infty)\times \partial D\ , \\
u_\varepsilon(0) & = 0 \;\text{ in }\; D\ , 
\end{align*}
which satisfies $v_\varepsilon\le u_\varepsilon \le 0$ and $g_W(u_\ve)\le g_W(v_\ve)$ in $(0,\infty)\times D$.  It follows from \eqref{acdc3} that $u_\varepsilon\ge \varepsilon^3-1$ in $(0,\infty)\times D$ so that $u_\varepsilon$ is actually the solution to \eqref{EvEq001} with $\lambda (1-\varepsilon^3)$ instead of $\lambda$, the uniqueness being guaranteed by Theorem~\ref{EvThm005}. We then infer from Theorem~\ref{EvThm004} and Proposition~\ref{jovanotti}  that 
$$
U_{\lambda (1-\varepsilon^3)} = \inf_{t\ge 0} u_\varepsilon(t) \ge \varepsilon^3 - 1 \;\text{ in }\; D\ .
$$
However, $\lambda(1-\varepsilon^3)>\Lambda_z$ for $\varepsilon$ small enough and the just obtained lower bound contradicts the definition of $\Lambda_z$. Therefore, there is $T>0$ such that $\zeta_u(T)\not\equiv 0$ and thus $|\mathcal{C}(T)|>0$. Owing to the time monotonicity \eqref{gilmour} of the coincidence set, we have shown that $u(t)$ is zipped for $t\ge T$ and the proof is complete. 
\end{proof}

\begin{rem}
Inequality \eqref{neville} is actually valid for any energy solution to \eqref{EvEq001} with initial value $u_0\in\mathcal{A}\cap L_\infty(D)$ for $t$ large enough, thereby guaranteeing that $\zeta_u \not\equiv 0$ in $(0,t)\times D$ for such $t$. However, it is not clear whether this implies that $\zeta_{u}(t)\not\equiv 0$ for all $t$ sufficiently large.
\end{rem}

\section*{Acknowledgments}

Part of this work was done while PhL enjoyed the hospitality and support of the Institut f\"ur Angewandte Mathematik, Leibniz Universit\"at Hannover.




\begin{thebibliography}{11}
%

%
\bibitem{AGS08} L.~Ambrosio, N.~Gigli, and G.~Savar\'e. \textsl{Gradient flows in metric spaces and in the space of probability measures. Second edition.} Lectures in Mathematics ETH Z\"urich. Birkh\"auser Verlag, Basel, 2008.

%
\bibitem{Ba10}
V.~Barbu.
{\it Nonlinear differential equations of monotone types in Banach spaces.}
Springer Monographs in Mathematics, 2010, Springer New York, Heidelberg.
%


\bibitem{BGP00}
D.H.~Bernstein, P.~Guidotti, and J.~A. Pelesko. 
\textsl{Analytical and numerical analysis of electrostatically actuated MEMS devices}. 
Proceedings of Modeling and Simulation of Microsystems 2000, San Diego, CA, (2000), pp.~489--492.

\bibitem{Br72} H.~Br\'ezis. \textsl{Probl\`emes unilat\'eraux.} J. Math. Pures Appl. (9) \textbf{51} (1972), 1--168.

\bibitem{Br73} H.~Br\'ezis. 
\textsl{Op\'erateurs maximaux monotones et semi-groupes de contractions dans les espaces de Hilbert.}  North-Holland Mathematics Studies, No. 5., North-Holland Publishing Co., 1973.

\bibitem{BS73} 
H. Brezis and W.A. Strauss.
\textsl{Semi-linear second-order elliptic equations in $L^1$}. 
J. Math. Soc. Japan \textbf{25} (1973), 565--590.

\bibitem{BCMR} 
H. Brezis, T. Cazenave, Y. Martel, and A. Ramiandrisoa. 
\textsl{Blow up for $u_t - \Delta u=g(u)$ revisited.} Adv. Differential Equations {\bf 1} (1996), no. 1, 73--90. 

\bibitem{BP12} 
N. Brubaker and J. A. Pelesko.
\textsl{Analysis of a one-dimensional prescribed mean curvature equation with singular nonlinearity.}
Nonlinear Anal. {\bf 75} (2012),  5086--5102. 

\bibitem{CHW13}
Y.-H. Cheng, K.-C. Hung, and S.-H. Wang.
\textsl{Global bifurcation diagrams and exact multiplicity of positive solutions for a one-dimensional prescribed mean curvature problem arising in MEMS.} Nonlinear Anal. {\bf 89} (2013), 284--298.


\bibitem{EGG10}
P.~Esposito, N.~Ghoussoub, and Y.~Guo
\textsl{Mathematical
  analysis of partial differential equations modeling electrostatic {MEMS}},
  vol.~20 of Courant Lecture Notes in Mathematics, Courant Institute of
  Mathematical Sciences, New York; American Mathematical Society, Providence,
  RI, 2010.

\bibitem{FMPS06}
G.~Flores, G.~Mercado, J.~A. Pelesko, and N.~Smyth.
\textsl{Analysis of the
  dynamics and touchdown in a model of electrostatic {MEMS}}, SIAM J. Appl.
  Math., 67 (2006/07), pp.~434--446 (electronic).

\bibitem{FL07}  I.~Fonseca and G.~Leoni, \textsl{Modern methods in the calculus of variations: $L^p$ spaces.} Springer Monographs in Mathematics. Springer, New York, 2007.

\bibitem{GhG06}
N.~Ghoussoub and Y.~Guo.
\textsl{On the partial differential equations of
  electrostatic {MEMS} devices: stationary case}, SIAM J. Math. Anal., 38
  (2006/07), pp.~1423--1449 (electronic).
	

\bibitem{GhG08a}
N.~Ghoussoub and Y.~Guo.
\textsl{Estimates for the
  quenching time of a parabolic equation modeling electrostatic {MEMS}},
  Methods Appl. Anal., 15 (2008), pp.~361--376	


\bibitem{GB01}	
P.~Guidotti and D.~Bernstein.
\textsl{Modeling and analysis of hysteresis phenomena in electrostatic zipper actuators}.
Proceedings of Modeling and Simulation of Microsystems 2001, Hilton Head Island, SC, 306--309. 
%


\bibitem{GPW05} 
Y.~Guo, Z.~Pan, and M.J.~Ward.
\textsl{Touchdown and pull-in voltage behavior of a MEMS device with varying dielectric properties}. 
SIAM J. Appl. Math. \textbf{66} (2005), 309--338.

\bibitem{Ka63} S.~Kaplan. \textit{On the growth of solutions of quasi-linear parabolic equations}. Comm. Pure Appl. Math \textbf{XVI} (1963), 305--330.

\bibitem{KS}
D. Kinderlehrer and G. Stampacchia. \textsl{An introduction to variational inequalities and their applications.} Reprint of the 1980 original. Classics in Applied Mathematics, 31. Society for Industrial and Applied Mathematics (SIAM), Philadelphia, PA, 2000. 


\bibitem{La15} Ph.~Lauren\c cot. \textsl{Weak compactness techniques and coagulation equations}, in ``Evolutionary Equations with Applications in Natural Sciences'', J.~Banasiak \& M.~Mokhtar-Kharroubi (eds.), Lecture Notes Math. \textbf{2126}, Springer, 2015, pp.~199--253.
%

\bibitem{LW17}
Ph.~Lauren{\c{c}}ot and Ch.~Walker. 
\textsl{Heterogeneous dielectric properties in MEMS Models}. Preprint (2017) submitted for publication.
%

\bibitem{LWBible}
Ph.~Lauren\c{c}ot and Ch.~Walker.
\textsl{Some singular equations modeling MEMS}.
Bull. Amer. Math. Soc. {\bf 54} (2017), 437-479.
%


\bibitem{Le77} C.-H.~L\^e, \textsl{Etude de la classe des op\'erateurs m-accr\' etifs de $L^1(\Omega)$ et accr\'etifs dans $L^\infty(\Omega)$}. Th\`ese de 3\`eme cycle (Universit\'e de Paris VI, Paris, 1977).

\bibitem{LLG14}
A.E.~Lindsay, J.~Lega, and K.G.~Glasner.
\textsl{Regularized model of post-touchdown configurations in electrostatic MEMS: Equilibrium analysis}.
Phys.~D \textbf{280-281} (2014), 95--108.
%
\bibitem{LLG15}
A.E.~Lindsay, J.~Lega, and K.G.~Glasner.
\textsl{Regularized model of post-touchdown configurations in electrostatic MEMS: Interface dynamics}.
IMA J. Appl. Math. \textbf{80} (2015), 1635--1663.


\bibitem{PX15}
H. Pan and R. Xing.
\textsl{On the existence of positive solutions for some nonlinear boundary value problems and applications to MEMS models.} Discrete Contin. Dyn. Syst. {\bf 35} (2015), no. 8, 3627--3682.


%
\bibitem{Pe02}
J.A.~Pelesko.
\textsl{Mathematical modeling of electrostatic MEMS with tailored dielectric properties}.
SIAM J. Appl. Math. \textbf{62} (2002), 888--908.
%
\bibitem{PeB03}
J.A.~Pelesko and D.H.~Bernstein.
{\it Modeling MEMS and NEMS}.
Chapman \& Hall/CRC, Boca Raton, FL, 2003.


\bibitem{Si87} J.~Simon. \textsl{Compact sets in the space $L^p(0,T;B)$.} Ann. Mat. Pura Appl. (4) \textbf{146} (1987), 65--96.


%
\end{thebibliography}
\end{document}